\numberwithin{equation}{section} 
\newenvironment{pdeq}{ \left\{ \begin{aligned}}{\end{aligned}\right.}
\newcommand{\eqrefsub}[2]{\ensuremath{\eqref{#1}_{#2}}}
\newcommand{\np}[1]{(#1)}
\newcommand{\nb}[1]{[#1]}
\newcommand{\bp}[1]{\big(#1\big)}
\newcommand{\bb}[1]{\big[#1\big]}
\newcommand{\Bp}[1]{\bigg(#1\bigg)}
\newcommand{\Bb}[1]{\bigg[#1\bigg]}
\newcommand{\calb}{{\mathcal B}}
\newcommand{\caln}{{\mathcal N}}
\newcommand{\calp}{{\mathcal P}}
\newcommand{\cals}{{\mathcal S}}
\newcommand{\calt}{{\mathcal T}}
\newcommand{\calx}{{\mathcal X}}
\newcommand{\R}{\mathbb{R}}
\newcommand{\C}{\mathbb{C}}
\newcommand{\Z}{\mathbb{Z}}
\newcommand{\N}{\mathbb{N}}
\DeclareMathOperator{\e}{e}
\DeclareMathOperator{\id}{Id}
\DeclareMathOperator{\Div}{div}
\DeclareMathOperator{\supp}{supp}
\DeclareMathOperator{\rot}{rot}
\newcommand{\bogopr}{\mathfrak{B}}
\DeclareMathOperator{\realpart}{Re}
\newcommand{\embeds}{\hookrightarrow}
\DeclareMathOperator{\vecspan}{span}
\newcommand{\ra}{\rightarrow}
\newcommand{\set}[1]{\ensuremath{\{#1\}}}
\newcommand{\setc}[2]{\ensuremath{\{#1\ \mid\ #2\}}}
\newcommand{\setcl}[2]{\ensuremath{\bigl\{#1\ \big\mid\ #2\bigr\}}}
\newcommand{\setcL}[2]{\ensuremath{\biggl\{#1\ \bigg\mid\ #2\biggr\}}}
\newcommand{\closure}[2]{\overline{#1}^{#2}}
\newcommand{\ball}{\mathrm{B}}
\renewcommand{\restriction}[2]{#1\big | _{#2}}
\newcommand{\proj}{\calp}
\newcommand{\projcompl}{\calp_\bot}
\newcommand{\projh}{\proj_\mathrm{H}}
\newcommand{\grp}{G}
\newcommand{\dualgrp}{\widehat{G}}
\newcommand{\dual}[1]{\widehat{#1}}
\newcommand{\torus}{{\mathbb T}}
\newcommand{\transpose}{\top}
\newcommand{\idmatrix}{\mathrm{I}}
\newcommand{\Rn}{{\R^n}}
\newcommand{\ddt}{\frac{{\mathrm d}}{{\mathrm d}t}}
\newcommand{\tddt}{\tfrac{{\mathrm d}}{{\mathrm d}t}}
\newcommand{\grad}{\nabla}
\newcommand{\dx}{{\mathrm d}x}
\newcommand{\dt}{{\mathrm d}t}
\newcommand{\dS}{{\mathrm d}S}
\newcommand{\dxi}{{\mathrm d}\xi}
\newcommand{\nvec}{\mathrm{n}}
\newcommand{\SR}{\mathscr{S}}
\newcommand{\TDR}{\mathscr{S^\prime}}
\newcommand{\FT}{\mathscr{F}}
\newcommand{\iFT}{\mathscr{F}^{-1}}
\newcommand{\AR}{\mathrm{A}}
\newcommand{\norm}[1]{\lVert#1\rVert}
\newcommand{\norml}[1]{\bigl\lVert#1\bigr\rVert}
\newcommand{\normL}[1]{\Bigl\lVert#1\Bigr\rVert}
\newcommand{\snorm}[1]{{\lvert #1 \rvert}}
\newcommand{\snorml}[1]{{\bigl\lvert #1 \big\rvert}}
\newcommand{\WSR}[2]{\mathrm{W}^{#1,#2}} 
\newcommand{\WSRN}[2]{\mathrm{W}^{#1,#2}_0}
\newcommand{\WSRloc}[2]{\mathrm{W}^{#1,#2}_{\mathrm{loc}}} 
\newcommand{\CR}[1]{\mathrm{C}^{#1}}  
\newcommand{\LR}[1]{\mathrm{L}^{#1}}
\newcommand{\lR}[1]{\ell^{#1}}
\newcommand{\LRloc}[1]{\mathrm{L}^{#1}_{\mathrm{loc}}} 
\newcommand{\CRi}{\CR \infty}
\newcommand{\CRci}{\CR \infty_0}
\newcommand{\LRsigma}[1]{\mathrm{L}^{#1}_{\sigma}}
\newcommand{\CRcisigma}{\CR{\infty}_{0,\sigma}}
\newcommand{\nsnonlinb}[2]{#1\cdot\grad #2}
\newcommand{\vvel}{v}
\newcommand{\vpres}{p}
\newcommand{\Vvel}{V}
\newcommand{\wvel}{w}
\newcommand{\wpres}{\mathfrak{q}}
\newcommand{\uvel}{u}
\newcommand{\uvels}{\uvel_{\mathrm{s}}}
\newcommand{\uvelp}{\uvel_{\mathrm{p}}}
\newcommand{\upress}{\upres_{\mathrm{s}}}
\newcommand{\upresp}{\upres_{\mathrm{p}}}
\newcommand{\upres}{\mathfrak{p}}
\newcommand{\Uvel}{U}
\newcommand{\Upres}{\mathfrak{P}}
\newcommand{\rotterm}[1]{\np{ \eone\wedge #1 - \nsnonlinb{\eone\wedge x}{#1}}}
\newcommand{\rottermsimple}[1]{ \eone\wedge #1 - \nsnonlinb{\eone\wedge x}{#1} }
\newcommand{\rotderterm}[1]{\np{\partial_t #1 + \eone\wedge #1 - \nsnonlinb{\eone\wedge x}{#1} }}
\newcommand{\rotdertermsimple}[1]{\partial_t #1 + \eone\wedge #1 - \nsnonlinb{\eone\wedge x}{#1} }
\newcommand{\rotdertermFT}[1]{\np{ik #1 + \eone\wedge #1 - \nsnonlinb{\eone\wedge x}{#1} }}
\newcommand{\rotdertermFTseq}[1]{\np{ik_j #1 + \eone\wedge #1 - \nsnonlinb{\eone\wedge x}{#1} }}
\newcommand{\rotdertermFTsimple}[1]{ik #1 + \eone\wedge #1 - \nsnonlinb{\eone\wedge x}{#1} }
\newcommand{\rotdertermFTsimpleseq}[1]{ik_j #1 + \eone\wedge #1 - \nsnonlinb{\eone\wedge x}{#1} }
\newcommand{\uf}{f}
\newcommand{\vf}{F}
\newcommand{\wf}{h}
\newcommand{\tin}{\text{in }}
\newcommand{\ton}{\text{on }}
\newcommand{\tfor}{\text{for }}
\newcommand{\half}{\frac{1}{2}}
\renewcommand{\epsilon}{\varepsilon}
\renewcommand{\phi}{\varphi}
\newcommand{\rey}{\lambda}
\newcommand{\veltrans}{\alpha}
\newcommand{\tay}{\omega}
\newcommand{\per}{\calt}
\newcommand{\iper}{\frac{1}{\per}}
\newcommand{\eone}{\e_1}
\newcommand{\rotmatrix}{Q}
\newcommand{\spanspacemn}{X^m_n}
\newcommand{\cutoff}{\chi}
\newcommand{\change}[1]{}
\newcommand{\barycenter}{x_\mathrm{C}}
\newcommand{\newCCtr}[2][d]{
\newcounter{#2}\setcounter{#2}{0}
\expandafter\xdef\csname kyedtheconst#2\endcsname{#1}
}
\newcommand{\Cc}[2][nolabel]{
\stepcounter{#2}
\expandafter\ensuremath{\csname kyedtheconst#2\endcsname_{\arabic{#2}}}
\ifthenelse{\equal{#1}{nolabel}}
{}
{\expandafter\xdef\csname kyedconst#1\endcsname
{\expandafter\ensuremath{\csname kyedtheconst#2\endcsname_{\arabic{#2}}}}}
}
\newcommand{\Ccn}[2][nolabel]{
\expandafter\ensuremath{\csname kyedtheconst#2\endcsname}
\ifthenelse{\equal{#1}{nolabel}}
{}
{\expandafter\xdef\csname kyedconst#1\endcsname
{\expandafter\ensuremath{\csname kyedtheconst#2\endcsname}}}
}
\newcommand{\CcSetCtr}[2]{
\setcounter{#1}{#2}
}
\newcommand{\Cclast}[1]{
\expandafter\ensuremath{\csname kyedtheconst#1\endcsname_{\arabic{#1}}}
}
\newcommand{\Ccllast}[1]{
\addtocounter{#1}{-1}
\expandafter\ensuremath{\csname kyedtheconst#1\endcsname_{\arabic{#1}}}
\addtocounter{#1}{1}
}
\newcommand{\const}[1]{
\expandafter{\ifcsname kyedconst#1\endcsname
  \csname kyedconst#1\endcsname
\else
  \errmessage{Undefined Kyedconstant #1.}%
\fi}
}
\renewcommand{\eqrefsub}[2]{\eqref{#1}\textsubscript{#2}}
\theoremstyle{plain}
\newtheorem{thm}{Theorem}[section]
\newtheorem{lem}[thm]{Lemma}
\newtheorem{prop}[thm]{Proposition}
\theoremstyle{remark}
\newtheorem{rem}[thm]{Remark}
\begin{document}
\title{Viscous flow around a rigid body performing a time-periodic motion}

\author{
Thomas Eiter\\ 
Fachbereich Mathematik\\
Technische Universit\"at Darmstadt\\
Schlossgartenstr. 7, 64289 Darmstadt, Germany\\
Email: {\texttt{eiter@mathematik.tu-darmstadt.de}}
\and
Mads Kyed\\ 
Flensburg University of Applied Sciences\\
Kanzleistra\ss e 91-93, 24943 Flensburg, Germany \\
Email: {\texttt{mads.kyed@hs-flensburg.de}}
}

\date{\today}
\maketitle

\begin{abstract}
The equations governing the flow of a viscous incompressible fluid 
around a rigid body that performs a prescribed time-periodic motion
with constant axes of translation and rotation are investigated.
Under the assumption that the period and the angular velocity of the prescribed rigid-body motion are compatible,
and that the mean translational velocity is non-zero, 
existence of a time-periodic solution is established.
The proof is based on an appropriate linearization, which is examined 
within a setting of absolutely convergent Fourier series.
Since the corresponding resolvent problem is ill-posed in classical Sobolev spaces, 
a linear theory is developed in a framework of homogeneous Sobolev spaces.
\end{abstract}

\noindent\textbf{MSC2010:} Primary 35Q30, 35B10, 76D05, 76D07, 76U05.\\
\noindent\textbf{Keywords:} Navier-Stokes, Oseen flow, time-periodic solutions, rotating obstacles.

\newCCtr[C]{C}
\newCCtr[M]{M}
\newCCtr[\epsilon]{eps}
\CcSetCtr{eps}{-1}
\newCCtr[c]{c}
\let\oldproof\proof
\def\proof{\CcSetCtr{c}{-1}\oldproof}

\section{Introduction}
We investigate the fluid flow past a rigid body $\calb$ that moves through an infinite three-dimensional liquid reservoir with prescribed velocity
\[
\Vvel(t,x)=\xi(t)+\eta\wedge \np{x-\barycenter(t)}
\]
with respect to its center of mass $\barycenter$.
Here $t\in\R$ and $x\in\R^3$ denote time and spatial variable, respectively,
$\xi\coloneqq\tddt\barycenter$ the translation velocity and
$\eta$ the angular velocity of $\calb$ with respect to its center of mass.
We consider only the case where the angular velocity $\eta$ is constant, but the translation velocity $\xi$ may depend on time.
In a frame attached to the body, with origin at its center of mass $\barycenter$, the motion of an incompressible Navier--Stokes fluid around $\calb$
that adheres to $\calb$ at the boundary
is described by the equations 
\begin{align}\label{sys:RotatingOseen_FixedDomain}
\begin{pdeq}
\rho\bp{\partial_t \uvel + \eta\wedge\uvel-\eta\wedge x \cdot\grad\uvel 
-\xi\cdot\grad\uvel+ \uvel\cdot\grad\uvel} 
&= f +\mu \Delta \uvel - \grad \upres 
&& \tin \R\times\Omega, \\
\Div\uvel&=0
&& \tin \R\times\Omega, \\
\uvel&=\xi+\eta\wedge x
&& \ton \R\times\partial\Omega, \\
\lim_{\snorm{x}\to\infty} \uvel(t,x) &= 0
&& \tfor t\in \R;
\end{pdeq}
\end{align}
see \cite[Section 1]{Galdi_OnTheMotionRigidBodyInViscousLiquid_2002}.
Here $\Omega\coloneqq\R^3\setminus\overline{\calb}$ is the exterior domain surrounding $\calb$, 
and $\R$ represents the time axis.
The functions $\uvel\colon\R\times\Omega\to\R^3$ and $\upres\colon\R\times\Omega\to\R$ 
describe velocity and pressure fields of the fluid.
The constants $\rho>0$ and $\mu>0$ denote density and viscosity, respectively.
For the sake of generality, we additionally consider 
an external body force $f\colon\R\times\Omega\to\R^3$.

In this paper, we investigate a configuration where the rigid body $\calb$ translates periodically
with some prescribed time period $\per>0$. 
More precisely, we assume the data
\[
\xi(t+\per)=\xi(t),\qquad
\uf(t+\per,x)=\uf(t,x)
\]
to be $\per$-time-periodic
As the main theorem we show existence of a solution 
$\np{\uvel,\upres}$ to \eqref{sys:RotatingOseen_FixedDomain}
that shares this time periodicity.

We consider a prescribed motion of $\calb$ where the axes of translation and rotation
do not vary over time and are parallel.
Without loss of generality, both are directed along the $x_1$-axis such that
\[
\xi(t)=\veltrans(t)\eone, \qquad \eta=\tay\eone
\]
for some $\per$-periodic function $\alpha\colon\R\to\R$ and a constant $\tay\in\R$.
Note that, at least in the case where $\xi$ is time-independent, this 
assumption can be made without loss of generality
as long as $\xi\cdot\eta\neq 0$ due to the Mozzi--Chasles theorem.

We assume that the mean translational velocity 
of the body over one time period is non-zero:
\begin{align}\label{eq:RotatingOseen_MeanTranslationNot0}
\rey\coloneqq\iper\int_0^\per \veltrans(t)\,\dt\neq 0.
\end{align} 
The case of vanishing mean translational velocity shall not be treated here. 
Not only does the fluid flow exhibit different physical properties when \eqref{eq:RotatingOseen_MeanTranslationNot0} is not satisfied,   
due to the absence of a wake region in this case, also the mathematical properties of the linearization 
of \eqref{sys:RotatingOseen_FixedDomain} differ significantly.
If \eqref{eq:RotatingOseen_MeanTranslationNot0} is satisfied,
the linearization of \eqref{sys:RotatingOseen_FixedDomain} 
is a time-periodic generalized Oseen system,
for which we shall establish suitable $\LR{q}$ estimates 
in order to show existence of a solution to \eqref{sys:RotatingOseen_FixedDomain}.
If \eqref{eq:RotatingOseen_MeanTranslationNot0} is not satisfied,
the linearization of \eqref{sys:RotatingOseen_FixedDomain} 
is a time-periodic generalized Stokes system,
for which similar estimates cannot be derived.
In this case, 
problem \eqref{sys:RotatingOseen_FixedDomain} thus has to be approached in a different way,
which has recently been done by \textsc{Galdi} \cite{Galdi_ViscousFlowPastBodyTPZeroAverage_2019}.

Since the case $\eta=0$ was treated in \cite{GaldiKyed_TPflowViscLiquidpBody_2018}, we consider only the case $\eta\neq 0$ in the following. 
Observe that $\eta\wedge x\cdot\grad$ is then a differential operator with \emph{unbounded} coefficient.
Therefore, the linearization of \eqref{sys:RotatingOseen_FixedDomain} 
cannot be treated as a lower-order perturbation
of the time-periodic Oseen problem, even if $\eta$ is ``small''. 
In particular, as we will see below, also the corresponding resolvent problem 
requires an analysis in a different functional setting.
This behavior reflects the properties of the corresponding stationary problem
(see \cite[Chapter VIII]{GaldiBookNew}),
which can be regarded as a special case of the time-periodic problem.
In order to find a framework in which the time-periodic generalized Oseen problem is well posed,
we employ the idea from
\cite{KyedGaldi_asplqesoserfII,KyedGaldi_asplqesoserfI}, where
the steady-state problem corresponding to \eqref{sys:RotatingOseen_FixedDomain} was considered,
and the rotation term
$\eta\wedge\uvel-\eta\wedge x \cdot\grad\uvel$
was handled by a change of coordinates into a non-rotating frame.
This procedure, however, merely yields suitable estimates for time-periodic solutions
when the change of coordinates maintains the time periodicity of the involved functions.
This is the case if the angular velocity $\tay$
is an integer multiple of the angular frequency $2\pi/\per$ of the time-periodic data.
For simplicity, we assume
\begin{equation}\label{eq:FrequenciesCoincide}
\tay=2\pi/\per.
\end{equation}
This condition means that during one period 
the rigid body completes one full revolution.
In other words, the rotation and the time-periodic data,
which may be regarded as two different sources of time-periodic forcing, 
have to be compatible.

The equations governing the fluid flow around 
a rigid body that performs a prescribed rigid motion 
has been studied by many researchers during the last decades.
The first attempts of a rigorous mathematical treatment can be dated back
to the fundamental works of \textsc{Oseen} \cite{oseenbook}, \textsc{Leray} \cite{Leray1933, Leray1934b}
and \textsc{Lady\v{z}henskaya} \cite{Ladyzenskaya_InvestigationNSStatMotionIncFluid_1959, LadyzhenskayaBook}. 
In a short note,
\textsc{Serrin} \cite{Serrin_PeriodicSolutionsNS1959} proposed the examination of the corresponding time-periodic configuration,
and  \textsc{Prodi} \cite{Prodi1960}, \textsc{Yudovich} \cite{Yudovich60} 
and \textsc{Prouse} \cite{Prouse63}
initiated the study of time-periodic Navier--Stokes flow in bounded domains.
Through the years, this investigation has been continued and extended 
to other types of domains and fluid-flow problems by several authors, see for example
\cite{KanielShinbrot67,Takeshita69,Morimoto1972,MiyakawaTeramoto82,Teramoto1983,
Maremonti_TimePer91,Maremonti_HalfSpace91,MaremontiPadula96,
KozonoNakao96, Yamazaki2000, GaldiSohr2004, 
GaldiSilvestre_ExistenceTPSolutionsNSAroundMovingBody_2006,
GaldiSilvestre_MotionRigidBodyNSLiquidTPForce_2009,
Taniuchi2009, BaalenWittwer2011, Silvestre_TPFiniteKineticEnergy12, GaldiTP2D13, Kyed_habil,
Kyed_ExistenceRegularityTPNS2014, Kyed_FundsolTPStokes2016, Nguyen2014, GeissertHieberNguyen_TP2016,
GaldiKyed_TPSolNS3D:AsymptoticProfile,  
EiterKyed_etpfslns_2018, GaldiKyed_TPflowViscLiquidpBody_2018}.
We also refer to \cite{GaldiKyed_TPSolNSE_Handbook2018} for a more detailed overview.
The time-periodic problem \eqref{sys:RotatingOseen_FixedDomain}
was object of research both in the article by
\textsc{Galdi} and \textsc{Silvestre}
\cite{GaldiSilvestre_ExistenceTPSolutionsNSAroundMovingBody_2006},
who established existence of time-periodic solutions 
in an $\LR{2}$ framework by a Galerkin approach, and in the article by
\textsc{Geissert}, \textsc{Hieber} and \textsc{Nguyen} \cite{GeissertHieberNguyen_TP2016},
who proved existence of mild time-periodic solutions within a setting of \emph{weak} $\LR{q}$ spaces
by means of semigroup theory for $\xi$ constant.
As the main novelty of the present paper, we present a proof of existence of strong solutions 
to \eqref{sys:RotatingOseen_FixedDomain} in an $\LR{q}$ setting.

Our approach is based on the analysis of 
the linearization of \eqref{sys:RotatingOseen_FixedDomain}
and the associated resolvent problem
\begin{align}\label{sys:RotatingOseen_resolvent_general}
\begin{pdeq}
is\vvel+\tay\rotterm{\vvel}- \Delta \vvel - \rey \partial_1 \vvel + \grad \vpres &= \vf
&& \tin \Omega, \\
\Div\vvel&=0
&& \tin \Omega, \\
\vvel&=0
&& \ton \partial\Omega
\end{pdeq}
\end{align}
for suitable $s\in\R$ and $\vf\in\LR{q}(\Omega)^3$, $1<q<\infty$.
At first glance, it seems reasonable to regard \eqref{sys:RotatingOseen_resolvent_general}
as a resolvent problem $(is-A)\vvel=\vf$ for a closed operator 
$A$ on the space of solenoidal vector-fields in $\LR{q}(\Omega)^3$.
However, the spectral analysis in this setting, which was carried out 
by \textsc{Farwig} and \textsc{Neustupa}
\cite{FarwigNeustupa_SpectrumOseenRotatingBody,FarwigNeustupa_SpectralPropertiesLq_2010}, 
reveals that $is$, $s\in\R$, belongs to the
spectrum of $A$ when $s\in \tay\Z$, which turn out to be exactly those values of $s$ that are required to be in the resolvent of the operator
in order to obtain a well-posed time-periodic problem.
Instead, we propose to investigate the problem in homogeneous Sobolev spaces.
Although it is merely possible to derive the \emph{non-classical}  
resolvent estimate \eqref{est:RotatingOseen_resolvent} in this setting
(see Theorem \ref{thm:RotatingOseen_resolvent} below),
we are nevertheless able to conclude a suitable solution theory for 
the linearization of \eqref{sys:RotatingOseen_FixedDomain}.
To this end, we shall employ a framework of functions with absolutely convergent Fourier series.
Finally, a fixed-point argument yields the existence of a solution to
the nonlinear problem \eqref{sys:RotatingOseen_FixedDomain} 
when the data $\uf$, $\xi$ and $\eta$ are ``sufficiently small''.

\section{Main results}\label{sec:Reformulation}

In virtue of \eqref{eq:RotatingOseen_MeanTranslationNot0} 
we may assume $\rey>0$ without loss of generality, and by \eqref{eq:FrequenciesCoincide}
we have $\tay=2\pi/\per>0$.
To reformulate \eqref{sys:RotatingOseen_FixedDomain} in a non-dimensional way,
we let the diameter $d>0$ of $\calb$ serve as a characteristic length scale.
We introduce the Reynolds number $\rey'\coloneqq\rey \rho d/\mu$ and the 
Taylor number $\tay'\coloneqq\tay \rho d^2/\mu$, and
the non-dimensional time and spatial variables 
$t'=\tay t$ and $x'=x/d$. 
In particular, $\Omega$ is transformed to
$
\Omega'
\coloneqq
\setc{x/d}{x \in \Omega}.
$
We define
$\veltrans'(t')\coloneqq\veltrans(t)\rho d/\mu$
and the non-dimensional functions
\[
\uvel'(t',x')\coloneqq\frac{\rho d}{\mu}\uvel(t,x), 
\qquad 
\upres'(t',x')\coloneqq\frac{\rho d^2}{\mu^2}\upres(t,x), 
\qquad
f'(t',x')\coloneqq\frac{\rho d^3}{\mu^2}f(t,x),
\]
which are time-periodic with period $\per'=2\pi$
and can thus be identified with functions on the torus group $\torus=\R/2\pi\Z$ with respect to time. 
Expressing \eqref{sys:RotatingOseen_FixedDomain} in these new quantities and
omitting the primes,
we obtain the non-dimensional formulation 
\begin{align}\label{sys:RotatingOseen_Dimensionless}
\begin{pdeq}
\tay\np{\partial_t\uvel+\rottermsimple{\uvel} }
- \veltrans \partial_1 \uvel + \uvel\cdot\grad\uvel &= f + \Delta \uvel - \grad \upres
&& \tin \torus\times\Omega, \\
\Div\uvel&=0
&& \tin \torus\times\Omega, \\
\uvel&=\veltrans\eone + \tay \eone \wedge x
&& \ton \torus\times\partial\Omega, \\
\lim_{\snorm{x}\to\infty} \uvel(t,x) &= 0
&& \tfor t\in \torus.
\end{pdeq}
\end{align}
Our analysis of \eqref{sys:RotatingOseen_Dimensionless} is based on the study of the linear 
time-periodic problem
\begin{align}\label{sys:RotatingOseen_linear}
\begin{pdeq}
\tay\rotderterm{\uvel}- \Delta \uvel - \rey \partial_1 \uvel + \grad \upres &= f
&& \tin \torus\times\Omega, \\
\Div\uvel&=0
&& \tin \torus\times\Omega, \\
\uvel&=0
&& \ton \torus\times\partial\Omega,
\end{pdeq}
\end{align}
and of the corresponding resolvent problem
\begin{align}\label{sys:RotatingOseen_resolvent}
\begin{pdeq}
\tay\rotdertermFT{\vvel}- \Delta \vvel - \rey \partial_1 \vvel + \grad \vpres &= \vf
&& \tin \Omega, \\
\Div\vvel&=0
&& \tin \Omega, \\
\vvel&=0
&& \ton \partial\Omega
\end{pdeq}
\end{align}
for $k\in\Z$.
For the latter we shall derive the following well-posedness result.

\begin{thm}\label{thm:RotatingOseen_resolvent}
Let $\Omega\subset\R^3$ be an exterior domain of class $\CR{3}$.
Let $q\in(1,2)$, $k\in\Z$ and $\rey,\,\tay, \,\theta,\,B>0$ with $\rey^2\leq\theta\tay\leq B$.
For every $\vf\in\LR{q}(\Omega)^3$
there exists a solution 
$\np{\vvel,\vpres}\in\WSRloc{2}{q}(\overline{\Omega})^3\times\WSRloc{1}{q}(\overline{\Omega})$ 
to \eqref{sys:RotatingOseen_resolvent}
subject to the estimate 
\begin{align}\label{est:RotatingOseen_resolvent}
\begin{split}
\tay&\norm{\rotdertermFTsimple{\vvel}}_{q}
+\norm{\grad^2\vvel}_{q}
+\rey\norm{\partial_1\vvel}_{q}
\\
&\qquad\qquad\qquad+\rey^{1/2}\norm{\vvel}_{s_1}
+\rey^{1/4}\norm{\grad\vvel}_{s_2} 
+\norm{\grad\vpres}_{q}
\leq\Cc[const:RotatingOseen_linear]{C}
\norm{\vf}_{q}
\end{split}
\end{align}
for a constant
$\const{const:RotatingOseen_linear}
=\const{const:RotatingOseen_linear}\np{\Omega,q,\rey,\tay}>0$
and $s_1=2q/\np{2-q}$, $s_2=4q/\np{4-q}$.
Additionally, if $\np{\wvel,\wpres}$ 
is another solution to \eqref{sys:RotatingOseen_resolvent}
in the function class defined by the norms on the left-hand side of \eqref{est:RotatingOseen_resolvent},
then $\vvel=\wvel$, and $\vpres-\wpres$ is a constant.
Moreover, if $q\in(1,\frac{3}{2})$, then the constant $\const{const:RotatingOseen_linear}$
can be chosen independently of $\rey$ and $\tay$ such that
$\const{const:RotatingOseen_linear}
=\const{const:RotatingOseen_linear}(\Omega,q,\theta,B)$.
\end{thm}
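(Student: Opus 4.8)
The strategy is the classical one for exterior-domain problems: reduce to the whole space $\R^3$ by a cut-off and Bogovski\u{\i} correction, use the available whole-space theory for the time-periodic Oseen resolvent (which in turn follows, after a change of coordinates removing the rotation term, from Fourier-multiplier estimates for the generalized Oseen system), and then patch with an interior problem on a bounded domain where the resolvent is a compact perturbation of the Stokes operator. Concretely, first I would fix a ball $B_R \supset \R^3\setminus\Omega$ and a cut-off $\cutoff\in\CRci(\R^3)$ with $\cutoff\equiv 1$ on $B_R$, $\supp\cutoff\subset B_{2R}$, and split a putative solution as $\vvel = (1-\cutoff)\Vvel + \cutoff\wvel + \bogopr[\grad\cutoff\cdot(\Vvel-\wvel)]$, where $\Vvel$ solves the whole-space problem with right-hand side $(1-\cutoff)\vf$ (suitably extended) and $\wvel$ solves an interior problem on $B_{2R}\cap\Omega$; the Bogovski\u{\i} operator $\bogopr$ on the annulus restores the divergence constraint. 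This is standard (cf.\ the construction for the stationary rotating Oseen problem in \cite{KyedGaldi_asplqesoserfI,KyedGaldi_asplqesoserfII}).

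The heart of the matter is the whole-space estimate. There I would use the change of variables into a non-rotating frame — the same device that makes condition \eqref{eq:FrequenciesCoincide} natural — which turns $\tay(ik\vvel+\eone\wedge\vvel - (\eone\wedge x)\cdot\grad\vvel)$ on $\R^3$ into a time-derivative/Oseen term on the torus, and then read off $\LR q$ estimates from the Fourier-transform representation of the solution. The multiplier analysis must simultaneously control the full set of norms appearing on the left of \eqref{est:RotatingOseen_resolvent}: the second gradient $\norm{\grad^2\vvel}_q$ and $\rey\norm{\partial_1\vvel}_q$ from the Oseen operator, the pressure gradient $\norm{\grad\vpres}_q$ via the Helmholtz projection, and — crucially — the two anisotropically weighted lower-order norms $\rey^{1/2}\norm{\vvel}_{s_1}$ and $\rey^{1/4}\norm{\grad\vvel}_{s_2}$ with $s_1 = 2q/(2-q)$, $s_2 = 4q/(4-q)$. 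These last two reflect the anisotropic decay of the Oseen fundamental solution in the wake and are exactly why the hypothesis $q\in(1,2)$ is imposed; they are obtained, as in the steady Oseen theory (e.g.\ \cite{GaldiBookNew}), from Sobolev-type inequalities applied to the multiplier estimates, after a scaling in $\rey$ that exposes the powers $\rey^{1/2}$ and $\rey^{1/4}$. For $k=0$ one recovers precisely the stationary rotating Oseen estimates of \cite{KyedGaldi_asplqesoserfI,KyedGaldi_asplqesoserfII}; for $k\neq0$ the extra term $ik\tay\vvel$ only helps (it is dominated by $\tay\norm{\rotdertermFTsimple{\vvel}}_q$ on the left), so the constant does not deteriorate in $k$.

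For the interior piece, on the bounded domain $B_{2R}\cap\Omega$ the operator $\vvel\mapsto -\Delta\vvel - \rey\partial_1\vvel + \tay(ik\vvel + \eone\wedge\vvel - (\eone\wedge x)\cdot\grad\vvel)$ together with the divergence constraint is a lower-order perturbation of the Stokes operator — here $(\eone\wedge x)\cdot\grad$ has bounded coefficients since $x$ ranges over a bounded set — so existence and $\WSR 2q\times\WSR 1q$ estimates follow from the $\LR q$ theory for the Stokes problem in bounded domains plus a compactness/Fredholm argument; uniqueness for the interior problem, and hence solvability, comes from an energy estimate using that the skew-symmetric rotation term and the $ik$-term drop out in the $\LR2$ pairing. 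Assembling the three pieces gives a solution with the claimed regularity; the estimate \eqref{est:RotatingOseen_resolvent} follows by combining the whole-space bound, the interior bound, and the mapping properties of $\bogopr$, absorbing the lower-order terms supported in the annulus via a standard interpolation/contradiction argument.

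\textbf{Main obstacle.} The delicate point is the uniform-in-$(\rey,\tay)$ statement for $q\in(1,\frac32)$: one must show the constant depends only on $\theta$ and $B$ through $\rey^2\le\theta\tay\le B$, not on $\rey,\tay$ individually. The balance $\rey^2\sim\tay$ is precisely the scaling under which the substitution $x\mapsto \rey^{?}x$ in the change-of-frame variables keeps the rescaled problem in a fixed class, and the restriction $q<\frac32$ is what permits the lower-order weighted norms to be absorbed without picking up $\rey$-dependent constants in the patching step (the contradiction argument requires a compact embedding that is only available, with $\rey$-independent constants, in that range). Getting this scaling bookkeeping exactly right — tracking every power of $\rey$ and $\tay$ through the whole-space multiplier estimates, the Bogovski\u{\i} correction on an $\rey$-independent annulus, and the final absorption — is where the real work lies; the existence and qualitative uniqueness assertions are comparatively routine given the cited stationary and Stokes theories.
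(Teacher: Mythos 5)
Your plan matches the paper on two of its three main components: the whole-space $\LR{q}$ theory is indeed obtained by the change of variables into the non-rotating frame followed by Fourier-multiplier estimates (the paper passes through the time-periodic Oseen problem on $\torus\times\R^3$ and then extracts the resolvent estimate by restricting to a single Fourier mode), and the final \textit{a priori} estimate \eqref{est:RotatingOseen_resolvent} is obtained exactly as you describe: first a cut-off into interior/exterior pieces with Bogovski\u{\i} corrections giving an estimate with lower-order error terms on a bounded annulus (Lemma \ref{lem:RotatingOseen_ResolventEst_ErrorTerms}), then a compactness/contradiction argument removing those error terms (Lemma \ref{lem:RotatingOseen_ResolventEst_Final}). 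Your reading of the roles of $\rey^2\le\theta\tay$ (rescaling $x\mapsto\tay^{-1/2}x$ to normalize $\reyd=\rey\tay^{-1/2}\le\sqrt\theta$) and of $q<\tfrac32$ (needed in the contradiction step to squeeze enough integrability of the limiting $\vvel$, via $\grad^2\vvel\in\LR q\Rightarrow\vvel\in\LR{3q/(3-2q)}$, to invoke uniqueness when $\rey$ degenerates to $0$) is essentially correct, modulo the precise place where the restriction bites.

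Where you diverge from the paper is the \emph{existence} step. You propose a parametrix construction: glue the whole-space solution to an interior Stokes-type solution with a Bogovski\u{\i} divergence correction, then close via Fredholm theory. The paper instead proves existence by a Galerkin method on invading bounded domains $\Omega_m$ using the Stokes eigenfunctions (Lemma \ref{lem:RotatingOseen_Existence}). The key structural input, which your sketch does not replicate, is the pair of $\LR 2$ energy identities: testing with $\uvel$ kills the skew-symmetric rotation and $ik$-terms to control $\norm{\grad\uvel}_2$, and testing with $\projh\Delta\uvel$ together with the Galdi--Silvestre identity (Lemma \ref{lem:PropertiesRotTerm}) and the Heywood-type estimate $\norm{\grad^2\uvel}_2\lesssim\norm{\projh\Delta\uvel}_2+\norm{\grad\uvel}_2$ (Lemma \ref{lem:EstSecondDerivativesStokesOp}, with constant independent of the invading radius) controls the second derivatives uniformly in $m$. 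This hands you an exact $\LR 2$ solution after letting $m\to\infty$ with rotationally symmetric cut-offs, whose membership in the $\LR q$ class then follows by applying the whole-space theory once more. Your parametrix-plus-Fredholm route is plausible in spirit, but it is genuinely harder to justify here: the operator is not Fredholm on classical $\LR q_\sigma(\Omega)$ (the spectral results of Farwig--Neustupa cited in the introduction show $ik\tay$ lies in the essential spectrum), so the Fredholm alternative would have to be set up in the homogeneous/graph-norm spaces defined by the left-hand side of \eqref{est:RotatingOseen_resolvent}, where compactness of the error terms and closedness of the operator are not standard facts and are never established in the paper. The Galerkin route bypasses this entirely, which is why the paper chose it. If you want to salvage your approach you would need to prove, at a minimum, that the natural solution operator is closed with closed range on the appropriate homogeneous Banach spaces — work comparable in size to the Galerkin argument itself.
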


Note that for $k=0$
we recover the well-known $\LR{q}$ theory for the corresponding stationary problem; see \cite[Theorem VIII.8.1]{GaldiBookNew}.

In order to transfer estimate \eqref{est:RotatingOseen_resolvent}
to the time-periodic setting without
losing information on the dependencies of the constant $\const{const:RotatingOseen_linear}$,
we work within spaces $\AR(\torus;X)$ of absolutely convergent $X$-valued Fourier series
for suitable Banach spaces $X$; see \eqref{eq:DefOfA} below. 
We establish the following solution theory for 
the time-periodic problem \eqref{sys:RotatingOseen_linear}.

\begin{thm}\label{thm:RotatingOseen_linear}
Let $\Omega\subset\R^3$ be an exterior domain of class $\CR{3}$.
Let $q\in(1,2)$ and $\rey,\,\tay,\,\theta,\,B>0$ with $\rey^2\leq\theta\tay\leq B$.
For every 
$f\in\AR(\torus;\LR{q}(\Omega))^3$
there exists a solution 
$\np{\uvel,\upres}$
to \eqref{sys:RotatingOseen_linear}
subject to the estimate
\begin{align}\label{est:RotatingOseen_linear}
\begin{split}
\tay&\norm{\rotdertermsimple\uvel}_{\AR(\torus;\LR{q}(\Omega))}
+\norm{\grad^2\uvel}_{\AR(\torus;\LR{q}(\Omega))}
+\rey\norm{\partial_1\uvel}_{\AR(\torus;\LR{q}(\Omega))}
\\
& \quad\ 
+\rey^{1/2}\norm{\uvel}_{\AR(\torus;\LR{s_1}(\Omega))}
+\rey^{1/4}\norm{\grad\uvel}_{\AR(\torus;\LR{s_2}(\Omega))} 
+\norm{\grad\upres}_{\AR(\torus;\LR{q}(\Omega))}
\leq\const{const:RotatingOseen_linear}\norm{f}_{\AR(\torus;\LR{q}(\Omega))}
\end{split}
\end{align} 
for the constant
$\const{const:RotatingOseen_linear}$ from Theorem \ref{thm:RotatingOseen_resolvent},
and $s_1=2q/\np{2-q}$, $s_2=4q/\np{4-q}$.
Additionally, if $\np{\wvel,\wpres}$ 
is another solution to \eqref{sys:RotatingOseen_linear}
in the function class defined by the norms on the left-hand side of \eqref{est:RotatingOseen_linear},
then $\uvel=\wvel$ and $\upres=\wpres+\wpres_0$
for some (spatially constant) function $\wpres_0\colon\torus\to\R$.
\end{thm}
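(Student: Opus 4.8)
The plan is to reduce the time-periodic problem \eqref{sys:RotatingOseen_linear} to its temporal Fourier modes, solve each mode with Theorem \ref{thm:RotatingOseen_resolvent}, and reassemble the solution, using crucially that the constant $\const{const:RotatingOseen_linear}$ in \eqref{est:RotatingOseen_resolvent} does not depend on $k$. First I would expand $f=\sum_{k\in\Z}f_k\,\e^{ikt}$ with $f_k\in\LR{q}(\Omega)^3$ and $\sum_{k\in\Z}\norm{f_k}_{q}=\norm{f}_{\AR(\torus;\LR{q}(\Omega))}<\infty$, and for each $k\in\Z$ invoke Theorem \ref{thm:RotatingOseen_resolvent} with datum $f_k$ to obtain a solution $(\vvel_k,\vpres_k)$ of \eqref{sys:RotatingOseen_resolvent} satisfying \eqref{est:RotatingOseen_resolvent}. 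Before summing I would fix the additive constant in each $\vpres_k$ by a normalization, say $\int_{\Omega\cap\ball_{R_0}}\vpres_k\,\dx=0$ for a fixed ball $\ball_{R_0}\supset\partial\Omega$; this is legitimate by the uniqueness part of Theorem \ref{thm:RotatingOseen_resolvent}. Since $f$ is real-valued one has $f_{-k}=\overline{f_k}$, and the complex conjugate of the mode-$k$ equation in \eqref{sys:RotatingOseen_resolvent} is the mode-$(-k)$ equation; the same uniqueness, together with conjugation-invariance of the normalization, then forces $\vvel_{-k}=\overline{\vvel_k}$ and $\vpres_{-k}=\overline{\vpres_k}$.

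Next I would set $\uvel\coloneqq\sum_{k\in\Z}\vvel_k\,\e^{ikt}$ and $\upres\coloneqq\sum_{k\in\Z}\vpres_k\,\e^{ikt}$, which are real-valued by the conjugation symmetry. Summing \eqref{est:RotatingOseen_resolvent} over $k\in\Z$ and using $\norm{g}_{\AR(\torus;X)}=\sum_{k\in\Z}\norm{g_k}_X$ yields precisely \eqref{est:RotatingOseen_linear}; in particular the series for $\grad^2\uvel$, $\partial_1\uvel$, $\grad\uvel$, $\uvel$ and $\grad\upres$ converge absolutely in $\AR(\torus;\LR{q}(\Omega))$, $\AR(\torus;\LR{s_2}(\Omega))$ and $\AR(\torus;\LR{s_1}(\Omega))$ respectively, so $(\uvel,\upres)$ belongs to the asserted function class, and $\tay\rotderterm{\uvel}=\sum_{k\in\Z}\tay\rotdertermFT{\vvel_k}\,\e^{ikt}$ converges in $\AR(\torus;\LR{q}(\Omega))$. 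I would stress that only this combined expression is controlled, since the unbounded coefficient of $\eone\wedge x\cdot\grad$ prevents $\partial_t\uvel$ and $\eone\wedge x\cdot\grad\uvel$ from being separately $\LR{q}(\Omega)$-integrable, which is exactly why the estimate is phrased this way. That $(\uvel,\upres)$ solves \eqref{sys:RotatingOseen_linear} would follow by inserting the Fourier expansions into the equations: on every bounded subdomain of $\Omega$ the partial sums converge in $\WSRloc{2}{q}(\overline{\Omega})\times\WSRloc{1}{q}(\overline{\Omega})$ (using the pressure normalization and a local Poincaré inequality), so the mode-$k$ identity \eqref{sys:RotatingOseen_resolvent} passes to the limit and reproduces \eqref{sys:RotatingOseen_linear} termwise; the boundary condition holds since each $\vvel_k$ vanishes on $\partial\Omega$ and the trace operator is continuous.

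For uniqueness I would take a second solution $(\wvel,\wpres)$ lying in the class defined by the left-hand side of \eqref{est:RotatingOseen_linear}, form its temporal Fourier coefficients $(\wvel_k,\wpres_k)$, and observe that they solve \eqref{sys:RotatingOseen_resolvent} with datum $f_k$ and, by summability of the $\AR(\torus;X)$-norms, belong to the function class of Theorem \ref{thm:RotatingOseen_resolvent}. Its uniqueness statement then gives $\wvel_k=\vvel_k$ for all $k$, hence $\wvel=\uvel$, while $\vpres_k-\wpres_k$ is a constant for each $k$, hence $\grad(\upres-\wpres)=0$ in $\AR(\torus;\LR{q}(\Omega))$, so that $\upres=\wpres+\wpres_0$ for some spatially constant function $\wpres_0\colon\torus\to\R$.

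The substantive difficulty is not in this argument but in Theorem \ref{thm:RotatingOseen_resolvent} itself — the $k$-uniform resolvent estimate in homogeneous Sobolev spaces — which the present argument merely invokes. The only real care needed in passing from the resolvent estimate to the time-periodic one is bookkeeping: a pressure normalization compatible with complex conjugation, so that the $\vpres_k$ assemble into a single $\torus$-periodic pressure, and enough local convergence to carry the differential equation through the Fourier series. No smallness or compactness argument is required, since both the problem and the estimate are linear.
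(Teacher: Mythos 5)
Your argument is essentially the paper's own proof: expand $f$ into temporal Fourier modes, solve each mode by Theorem \ref{thm:RotatingOseen_resolvent} using that $\const{const:RotatingOseen_linear}$ is independent of $k$, reassemble, and sum \eqref{est:RotatingOseen_resolvent} over $k$; uniqueness follows by applying $\FT_\torus$ and invoking the resolvent uniqueness. The additional care you take with the pressure normalization, the conjugation symmetry $\vvel_{-k}=\overline{\vvel_k}$ to ensure the assembled solution is real-valued, and the local convergence needed to pass the PDE through the series are correct bookkeeping points that the paper leaves implicit.
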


In Section \ref{sec:ProofNonlinear}, 
we finally prove the following existence result
on solutions to the nonlinear system 
\eqref{sys:RotatingOseen_Dimensionless}.

\begin{thm}\label{thm:RotatingOseen_nonlinear}
Let $\Omega\subset\R^3$ be an exterior domain of class $\CR{3}$, 
and let $q\in\bb{\frac{6}{5},\frac{4}{3}}$.
Let $f\in\AR(\torus;\LR{q}(\Omega))^3$ and $\veltrans\in\AR(\torus;\R)$ 
such that $\ddt\veltrans\in\AR(\torus;\R)$.
Define 
\[
\rey\coloneqq\frac{1}{2\pi}\int_0^{2\pi}\veltrans(t)\,\dt.
\]
For all $\rho\in\bp{\frac{3q-3}{q},1}$ and $\theta>0$ there are constants $\kappa>0$ and $\rey_0>0$ such that for all
\begin{align}\label{Mads_omega_lambda_scaling}
\rey\in (0,\rey_0),\qquad \tay\in\bp{\frac{\rey^2}{\theta},\kappa\rey^\rho}
\end{align}
there exists $\varepsilon>0$ such that if
\[
\norm{\veltrans-\rey}_{\AR(\torus;\R)} 
+\norm{f}_{\AR(\torus;\LR{q}(\Omega))}\leq\varepsilon,
\]
then there is a solution 
$\np{\uvel,\upres}$
to \eqref{sys:RotatingOseen_Dimensionless} with 
\begin{align*}
\uvel\in\AR(\torus;\LR{2q/(2-q)}(\Omega))^3, \quad
\grad\uvel\in\AR(\torus;\LR{4q/(4-q)}(\Omega))^{3\times3}, \quad
\grad^2\uvel&\in\AR(\torus;\LR{q}(\Omega))^{3\times3\times3},\\
\rotdertermsimple\uvel,\
\partial_1\uvel,\
\grad\upres
\in\AR(\torus;\LR{q}(\Omega))^3.&
\end{align*}
\end{thm}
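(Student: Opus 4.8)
The plan is to recast \eqref{sys:RotatingOseen_Dimensionless} as a fixed-point equation for a perturbation $\uvel$ of a suitable lifting of the boundary data, and to solve it via the contraction mapping principle, using Theorem \ref{thm:RotatingOseen_linear} as the linear solver. First I would split $\veltrans=\rey+(\veltrans-\rey)$ and absorb the time-periodic part $\veltrans-\rey$ of the translational velocity, which is small by hypothesis, into the right-hand side together with $f$. The inhomogeneous boundary condition $\uvel=\veltrans\eone+\tay\eone\wedge x$ on $\torus\times\partial\Omega$ is handled by constructing an extension $\uvelbrd$: one takes a solenoidal, compactly supported time-periodic vector field agreeing with $\veltrans\eone+\tay\eone\wedge x$ near $\partial\Omega$ (a Bogovskii-type correction removes the divergence), and writes $\uvel=\uvelbrd+\wvel$. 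The new unknown $\wvel$ then solves a problem of the form \eqref{sys:RotatingOseen_linear} with right-hand side
\[
\mathcal{F}(\wvel)\coloneqq f-\mathcal{L}\uvelbrd-\bp{(\veltrans-\rey)\partial_1(\uvelbrd+\wvel)}-\bp{(\uvelbrd+\wvel)\cdot\grad(\uvelbrd+\wvel)}+\bp{\uvelbrd\cdot\grad\wvel\ \text{terms collected appropriately}},
\]
where $\mathcal{L}$ denotes the linear operator on the left of \eqref{sys:RotatingOseen_linear}; the precise bookkeeping of which quadratic terms are genuinely nonlinear in $\wvel$ versus linear-in-$\wvel$-with-small-coefficient is routine. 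Denote by $\mathcal{S}$ the solution operator from Theorem \ref{thm:RotatingOseen_linear}, mapping $\AR(\torus;\LR{q}(\Omega))^3$ into the space $\mathcal{Z}$ defined by the norms on the left-hand side of \eqref{est:RotatingOseen_linear}; I would then seek a fixed point of $\wvel\mapsto\mathcal{S}\bp{\mathcal{F}(\wvel)}$ in a small ball of $\mathcal{Z}$.

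The key estimates are the mapping properties of the nonlinearity back into $\AR(\torus;\LR{q}(\Omega))$. Since $\AR(\torus;\cdot)$ is a Banach algebra in the time variable (absolutely convergent Fourier series multiply to absolutely convergent Fourier series), the bilinear term reduces to the pointwise-in-$t$ Hölder estimate $\norm{\uvel\cdot\grad\wvel}_{\LR{q}}\le\norm{\uvel}_{\LR{s_1}}\norm{\grad\wvel}_{\LR{s_2}}$, valid precisely when $1/q=1/s_1+1/s_2=(2-q)/(2q)+(4-q)/(4q)$, i.e. $3/q=3$—which forces nothing, rather one checks $1/s_1+1/s_2=(6-3q)/(4q)\ge 1/q$ for $q\le 4/3$, so Hölder plus the Sobolev embedding $\LR{s}\embeds\LR{q}$ on... no: one needs the reverse. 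The correct reading is that $s_1,s_2$ are the exponents \emph{appearing in the linear estimate}, so $\uvel,\grad\uvel$ live in $\LR{s_1},\LR{s_2}$ automatically, and $\norm{\uvel\cdot\grad\uvel}_{\LR{q}}\le\norm{\uvel}_{\LR{s_1}}\norm{\grad\uvel}_{\LR{s_2}}$ holds iff $1/s_1+1/s_2\le 1/q$, equivalently $q\le 4/3$; the lower bound $q\ge 6/5$ together with $\rho>(3q-3)/q$ is what makes the $\rey$-power bookkeeping close. Combining this with the $\rey$-weights in \eqref{est:RotatingOseen_linear}, one obtains, schematically,
\[
\norm{\mathcal{S}\bp{(\uvelbrd+\wvel)\cdot\grad(\uvelbrd+\wvel)}}_{\mathcal{Z}}\le\Cc{C}\,\rey^{-3/4}\bp{\norm{\wvel}_{\mathcal{Z}}+\mathcal{E}}^2,
\]
where $\mathcal{E}$ collects the norm of $\uvelbrd$ and of the data, and the factor $\rey^{-3/4}$ arises from dividing by the weights $\rey^{1/2},\rey^{1/4}$ that multiply $\norm{\uvel}_{s_1}$ and $\norm{\grad\uvel}_{s_2}$ on the left. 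Since by Theorem \ref{thm:RotatingOseen_linear} the constant $\const{const:RotatingOseen_linear}$ is independent of $\rey,\tay$ in the stated range when $q<3/2$, the only place $\rey$ enters the contraction constant is through such explicit powers together with the scaling $\tay\in(\rey^2/\theta,\kappa\rey^\rho)$; a careful count shows the effective small parameter behaves like $\varepsilon\cdot\rey^{\rho-(3q-3)/q}$ up to constants, which is why $\rho>(3q-3)/q$ is imposed and why $\kappa$ and $\rey_0$ must be chosen small.

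With these bounds in hand, the rest is standard: for $\rey\in(0,\rey_0)$ and $\tay$ in the prescribed window, choose the ball radius $R\sim\varepsilon$; then $\mathcal{S}\circ\mathcal{F}$ maps the ball of radius $R$ in $\mathcal{Z}$ into itself and is a contraction there, provided $\kappa,\rey_0,\varepsilon$ are small enough relative to $\Cc{C},\theta,q,\rho$. Banach's fixed-point theorem yields $\wvel\in\mathcal{Z}$, hence $\uvel=\uvelbrd+\wvel$, and the associated pressure $\upres$ is recovered from Theorem \ref{thm:RotatingOseen_linear}; the regularity assertions in the statement are exactly membership in $\mathcal{Z}$ plus the mapping properties of $\uvelbrd$. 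The main obstacle is not the fixed-point scheme itself but the uniform-in-$(\rey,\tay)$ tracking of constants through the nonlinear estimate: one must verify that every term—including the linear-in-$\wvel$ contributions $(\veltrans-\rey)\partial_1\wvel$ and $\uvelbrd\cdot\grad\wvel+\wvel\cdot\grad\uvelbrd$—can be absorbed either by smallness of $\norm{\veltrans-\rey}_{\AR}$ and $\mathcal{E}$ or by a favorable power of $\rey$ coming from the weighted norms, and that the exponent arithmetic tying $q\in[6/5,4/3]$, $\rho\in((3q-3)/q,1)$, and $\tay\lesssim\rey^\rho$ is consistent. This is where the precise hypotheses \eqref{Mads_omega_lambda_scaling} are used in full.
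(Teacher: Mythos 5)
Your overall strategy mirrors the paper's: lift the inhomogeneous boundary data (the paper uses an explicit curl, $\Uvel = \tfrac12\rot[\cdots\varphi]$, rather than a Bogovski\u{\i} correction, but this is immaterial), rewrite \eqref{sys:RotatingOseen_Dimensionless} as a fixed-point equation for the remainder, solve it by contraction in the space defined by the norms on the left of \eqref{est:RotatingOseen_linear}, and use the uniformity of $\const{const:RotatingOseen_linear}$ in $\rey,\tay$ together with the scaling \eqref{Mads_omega_lambda_scaling} to close. That is the correct plan.

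However, there is a genuine gap in the bilinear estimate, and your exponent arithmetic is wrong in a way that hides it. With $s_1=2q/(2-q)$ and $s_2=4q/(4-q)$ one has
\[
\frac{1}{s_1}+\frac{1}{s_2}=\frac{2-q}{2q}+\frac{4-q}{4q}=\frac{8-3q}{4q},
\]
not $(6-3q)/(4q)$, and $(8-3q)/(4q)\le 1/q$ holds iff $q\ge 4/3$, the \emph{opposite} of what you wrote. For the relevant range $q\in[\tfrac65,\tfrac43)$ one has $1/s_1+1/s_2>1/q$, so H\"older gives only $\uvel\cdot\grad\uvel\in L^{4q/(8-3q)}$ with $4q/(8-3q)<q$, and on an unbounded domain this does \emph{not} embed into $L^q$. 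The direct inequality $\norm{\uvel\cdot\grad\uvel}_{q}\le\norm{\uvel}_{s_1}\norm{\grad\uvel}_{s_2}$, which your $\rey^{-3/4}$ count relies on, is therefore false except at $q=4/3$. The paper repairs this by H\"older against $L^2$, $\norm{\uvel\cdot\grad\uvel}_{q}\le\norm{\uvel}_{s_1}\norm{\grad\uvel}_{2}$ (note $1/s_1+1/2=1/q$), then interpolates $\norm{\grad\uvel}_2$ between $\norm{\grad\uvel}_{s_2}$ and $\norm{\grad\uvel}_{3q/(3-q)}$, controlling the latter by Sobolev through $\grad^2\uvel\in\LR q$. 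This interpolation is valid exactly because $\tfrac{4q}{4-q}\le 2\le\tfrac{3q}{3-q}$ for $q\in[\tfrac65,\tfrac43]$; it is the source of the correct power $\rey^{-(3q-3)/q}$ (not $\rey^{-3/4}$) and hence of the requirement $\rho>(3q-3)/q$ in \eqref{Mads_omega_lambda_scaling}. Without this step your contraction estimate has no valid route from $\uvel\cdot\grad\uvel$ back to $\AR(\torus;\LR q(\Omega))$, so the fixed-point argument as written does not close.
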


\begin{rem}
The lower bound $\frac{\rey^2}{\theta}\leq\tay$ on the angular velocity in \eqref{Mads_omega_lambda_scaling} may seem strange in light of the underlying physics of the problem.
From a physical point of view, the limit $\tay\ra 0$ towards the case of a non-rotating body seems uncritical.
The lower bound on $\tay$ in \eqref{Mads_omega_lambda_scaling} is an artifact of the
change of coordinates into the rotating frame of reference employed in the mathematical analysis of the problem, which leads to \textit{a priori} estimates with constants exhibiting a singular behavior as $\tay\ra 0$. As a consequence, a lower bound on $\tay$ is required in Theorem \ref{thm:RotatingOseen_nonlinear} to obtain existence of a solution via a fixed-point iteration.  
A similar observation was made in the investigation of a steady flow past a rotating and translating obstacle carried out in \cite{FarwigHishidaMueller}.
From a mathematical point of view, it is therefore not surprising to see the same effect appearing in the more general time-periodic case investigated here. 
\end{rem}

\section{Preliminaries}\label{sec:Preliminaries}

We use capital letters to denote global constants, 
while constants in small letters are local to the respective proof. 
When we want to emphasize that a constant $C$ depends on the quantities $\alpha,\beta,\gamma,\dots$,
we write $C(\alpha,\beta,\gamma,\dots)$.

We denote points in $\torus\times\R^3$ by $(t,x)$, 
where $t$ and $x=\np{x_1,x_2,x_3}$ are referred to as time and spatial variable. 
The symbol $\Omega$ always denotes an exterior domain, that is,
$\Omega\subset\R^3$ is connected and the complement of a non-empty compact set. 
We always assume that the origin is not contained in $\Omega$.

Inner and outer product of two vectors $a,b\in\R^3$ are denoted 
by $a\cdot b$ and $a\wedge b$, respectively. 
For any radius $R>0$ we set  $\ball_R\coloneqq\setcl{x\in\R^3}{\snorm{x}<R}$, 
$\ball^R\coloneqq\setcl{x\in\R^3}{\snorm{x}>R}$,
and for a domain $D\subset\R^3$ we define
$D_R\coloneqq D\cap\ball_R$ and $D^R\coloneqq D\cap\ball^R$.

For $q\in[1,\infty]$, $k\in\N_0$, 
the symbols $\LR{q}(D)$ and $\WSR{k}{q}(D)$ denote usual Lebesgue and Sobolev spaces with
associated norms $\norm{\cdot}_{q}=\norm{\cdot}_{q;D}$ and 
$\norm{\cdot}_{k,q}=\norm{\cdot}_{k,q;D}$, respectively. 
Furthermore, $\WSRN{1}{q}(D)$ denotes the subset of functions in $\WSR{1}{q}(D)$
with vanishing boundary trace,
and $\WSR{-1}{q}(D)$ (with norm $\norm{\cdot}_{-1,q;D}$) 
is the dual space of $\WSRN{1}{q'}(D)$ where $1/q+1/q'=1$
with the usual convention $1/\infty\coloneqq 0$.
Moreover, $\LRsigma{2}(D)$ denotes the set of solenoidal vector fields in $\LR{2}(D)^3$, that is,
\[
\LRsigma{2}(D)\coloneqq\closure{\setcl{\varphi\in\CRci(D)^3}{\Div\varphi=0}}{\norm{\cdot}_{2}},
\] 
and $\projh$ is the corresponding Helmholtz projection that maps $\LR{2}(D)^3$ onto $\LRsigma{2}(D)$.

We always identify $2\pi$-periodic functions
with functions on the torus group $\torus\coloneqq\R/2\pi\Z$,
which is usually represented by the set $[0,2\pi)$.
We consider $\torus$ and $\grp\coloneqq\torus\times\R^3$ as locally compact abelian groups.
The (normalized) Haar measure on $\torus$ is given by
\[
\forall f\in\CR{}(\torus):\quad \int_\torus f\,\dt \coloneqq \frac{1}{2\pi}\int_0^{2\pi} f(t)\,\dt,
\]
and $\grp$ is equipped with the corresponding product measure.
Recall that the dual group of 
$\torus$ can be identified with $\dual{\torus}=\Z$
and that of
$\grp$ with $\dualgrp\coloneqq\Z\times\R^3$.

For $H=\torus$ or $H=\grp$, 
the space $\SR(H)$ is the Schwartz--Bruhat space of generalized Schwartz functions on $H$,
and $\TDR(H)$ denotes the corresponding dual space of tempered distributions;
see \cite{Bruhat61, EiterKyed_tplinNS_PiFbook} for precise definitions.
The Fourier transform on $\torus$ and $\grp$ and the respective inverses are given by
\begin{align*}
\FT_\torus\colon\SR(\torus)\ra\SR(\Z), &&\FT_\torus\nb{\uvel}(k)&\coloneqq
\int_\torus \uvel(t)\,\e^{-ik t}\,\dt,\\
\iFT_\torus\colon\SR(\Z)\ra\SR(\torus), &&\iFT_\torus\nb{\wvel}(t)&\coloneqq
\sum_{k\in\Z} \wvel(k)\,\e^{ik t},\\
\FT_\grp\colon\SR(\grp)\ra\SR(\dualgrp), &&\FT_\grp\nb{\uvel}(k,\xi)&\coloneqq
\int_\torus\int_{\R^n} \uvel(t,x)\,\e^{-ix\cdot\xi-ik t}\,\dx\dt,\\
\iFT_\grp\colon\SR(\dualgrp)\ra\SR(\grp), &&\iFT_\grp\nb{\wvel}(t,x)&\coloneqq
\sum_{k\in\Z}\,\int_{\R^n} \wvel(k,\xi)\,\e^{ix\cdot\xi+ik t}\,\dxi,
\end{align*}
provided the Lebesgue measure $\dxi$ is correctly normalized.
By duality, $\FT_\torus$ and $\FT_\grp$ are extended to homeomorphisms 
$\FT_\torus\colon\TDR(\torus)\ra\TDR(\Z)$ and $\FT_\grp\colon\TDR(\grp)\ra\TDR(\dualgrp)$, respectively.

Furthermore, we introduce 
the Sobolev space
\begin{align*}
&\WSR{1,2}{q}(\torus\times D) \coloneqq \closure{\CRci(\torus\times\overline{D})}{\norm{\cdot}_{1,2,q}},\quad 
\norm{f}_{1,2,q}\coloneqq\Bp{
\norm{\partial_t f}_{q}^q +
\sum_{k=0}^2 \norm{\grad^k f}_{q}^q  
}^{\frac{1}{q}},
\end{align*}
where $\CRci(\torus\times\overline{D})$ 
denotes the space of smooth functions of compact support on $\torus\times \overline{D}$ .

Let $X$ denote a Banach space.
For functions $\uvel\in\LR{1}(\torus;X)$
we introduce the projections $\proj$ and $\projcompl$ by 
\[
\proj\uvel
\coloneqq\int_{\torus}\uvel(t)\,\dt, \qquad
\projcompl\coloneqq\id-\proj.
\]
Note that $\proj\uvel\in X$ is time-independent, 
and we have the decomposition $\uvel=\proj\uvel+\projcompl\uvel$
into the \emph{steady-state} part $\proj\uvel$ 
and the \emph{purely periodic} part $\projcompl\uvel$ of $\uvel$.

Our analysis of the time-periodic problems 
\eqref{sys:RotatingOseen_Dimensionless} and \eqref{sys:RotatingOseen_linear} will be carried out 
within spaces of functions with absolutely convergent Fourier series defined by
\begin{align}\label{eq:DefOfA}
\begin{aligned}
\AR(\torus;X)
&\coloneqq
\setcL{f\colon\torus\to X}{f(t)=\sum_{k\in\Z}f_k \e^{ikt}, \ f_k\in X, \ 
\sum_{k\in\Z}\norm{f_k}_{X}<\infty},
\\
\norm{f}_{\AR(\torus;X)}
&\coloneqq\sum_{k\in\Z}\norm{f_k}_{X}.
\end{aligned}
\end{align}
Observe that $\AR(\torus;X)$ is the Banach space that coincides with 
$\iFT_\torus\bb{\lR{1}(\Z;X)}$, which embeds into the $X$-valued
continuous functions on $\torus$.
It is well known that the scalar-valued space $\AR(\torus;\R)$ is an algebra 
with respect to pointwise multiplication, the so-called Wiener algebra. 
One can exploit this property to derive estimates in the $X$-valued case.
For example, one readily shows the following correspondences of H\"older's inequality 
and interpolation inequalities.

\begin{prop}\label{prop:Hoelder_AR}
Let $D\subset\R^n$, $n\in\N$, be an open set and $p,q,r\in[1,\infty]$ such that $1/p+1/q=1/r$.
Moreover, let $f\in\AR(\torus;\LR{p}(D))$ and $g\in\AR(\torus;\LR{q}(D))$.
Then $fg\in\AR(\torus;\LR{r}(D))$ and
\begin{align}\label{est:Hoelder_AR}
\norm{fg}_{\AR(\torus;\LR{r}(D))}
\leq\norm{f}_{\AR(\torus;\LR{p}(D))}
\norm{g}_{\AR(\torus;\LR{q}(D))}.
\end{align}
\end{prop}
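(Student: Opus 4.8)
The plan is to use the fact that, for $2\pi$-periodic functions, the Fourier coefficients of a product are the discrete convolution of the Fourier coefficients of the factors, and then to combine the pointwise H\"older inequality in Lebesgue spaces with the elementary fact that discrete convolution is bounded from $\lR{1}(\Z)\times\lR{1}(\Z)$ to $\lR{1}(\Z)$. Concretely, writing $f(t)=\sum_{k\in\Z}f_k\e^{ikt}$ and $g(t)=\sum_{l\in\Z}g_l\e^{ilt}$ with $\sum_k\norm{f_k}_{p}=\norm{f}_{\AR(\torus;\LR{p}(D))}<\infty$ and $\sum_l\norm{g_l}_{q}=\norm{g}_{\AR(\torus;\LR{q}(D))}<\infty$, I would first observe that $\LR{p}(D)\cdot\LR{q}(D)\embeds\LR{r}(D)$ with $\norm{uv}_{r}\leq\norm{u}_{p}\norm{v}_{q}$, so that for each fixed $m\in\Z$ the series $h_m\coloneqq\sum_{k\in\Z}f_k\,g_{m-k}$ converges absolutely in $\LR{r}(D)$, with $\norm{h_m}_{r}\leq\sum_{k\in\Z}\norm{f_k}_{p}\norm{g_{m-k}}_{q}$.

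Next I would sum over $m$ and interchange the two summations by Tonelli's theorem (legitimate since all terms are nonnegative) to get $\sum_{m\in\Z}\norm{h_m}_{r}\leq\bp{\sum_k\norm{f_k}_{p}}\bp{\sum_l\norm{g_l}_{q}}=\norm{f}_{\AR(\torus;\LR{p}(D))}\norm{g}_{\AR(\torus;\LR{q}(D))}$. Thus $(h_m)_{m\in\Z}\in\lR{1}(\Z;\LR{r}(D))$, so $h\coloneqq\iFT_\torus\bb{(h_m)_{m\in\Z}}$ is a well-defined element of $\AR(\torus;\LR{r}(D))$ obeying the desired norm bound, and it then only remains to identify $h$ with the pointwise product $fg$. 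For this I would truncate: set $f^{(N)}\coloneqq\sum_{\snorm{k}\leq N}f_k\e^{ikt}$ and $g^{(N)}\coloneqq\sum_{\snorm{l}\leq N}g_l\e^{ilt}$, and use the embedding $\AR(\torus;X)\embeds\CR{}(\torus;X)$ to conclude that $f^{(N)}\to f$ and $g^{(N)}\to g$ uniformly on $\torus$, in $\LR{p}(D)$ resp. $\LR{q}(D)$, hence, by H\"older applied to the differences, $f^{(N)}g^{(N)}\to fg$ uniformly in $\LR{r}(D)$. On the other hand $f^{(N)}g^{(N)}$ is a trigonometric polynomial whose $m$-th coefficient, $\sum_{\snorm{k}\leq N,\,\snorm{m-k}\leq N}f_k\,g_{m-k}$, converges to $h_m$ in $\LR{r}(D)$ as $N\to\infty$ and is bounded in $\LR r(D)$-norm by the summable majorant $\sum_k\norm{f_k}_{p}\norm{g_{m-k}}_{q}$; dominated convergence in $\lR{1}(\Z;\LR{r}(D))$ then gives $f^{(N)}g^{(N)}\to h$ in $\AR(\torus;\LR{r}(D))$, and in particular uniformly. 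Comparing the two limits yields $fg=h$ together with the estimate \eqref{est:Hoelder_AR}.

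The argument is largely routine; the only points that require care are the justification of the interchange of summations (handled by Tonelli, since all terms are nonnegative) and the identification of the $\AR$-limit of the truncated products $f^{(N)}g^{(N)}$ with the genuine pointwise product $fg$, for which the continuous embedding of $\AR(\torus;X)$ into the space of $X$-valued continuous functions on $\torus$ is the convenient tool. No regularity of $D$ enters, and the endpoint values $p,q,r\in\set{1,\infty}$ are covered without modification, since only H\"older's inequality and $\lR{1}$-summability are used.
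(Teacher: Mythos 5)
Your proof follows essentially the same route as the paper's: write $f$ and $g$ as absolutely convergent Fourier series, observe that the Fourier coefficients of $fg$ are the discrete convolution $(f_k)\ast_\Z(g_k)$, and then combine the pointwise H\"older inequality on $D$ with the $\lR{1}$-boundedness of discrete convolution (Tonelli/Young). The only difference is that the paper simply asserts the identity $fg=\iFT_\torus\bb{(f_k)\ast_\Z(g_k)}$, whereas you supply a clean justification of it via truncated partial sums and the embedding $\AR(\torus;X)\embeds\CR{}(\torus;X)$ — a detail worth having, but not a different argument.
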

\begin{proof}
By assumption we have $f=\iFT_\torus\nb{(f_k)}$ and $g=\iFT_\torus\nb{(g_k)}$
for elements $(f_k)\in\lR{1}(\Z;\LR{p}(D))$ and $(g_k)\in\lR{1}(\Z;\LR{q}(D))$.
Then $fg=\iFT_\torus\bb{(f_k)\ast_\Z (g_k)}$ and
\begin{align*}
\norm{fg}_{\AR(\torus;\LR{r}(D))}
&=\sum_{k\in\Z} \normL{\sum_{\ell\in\Z} f_\ell g_{k-\ell}}_{\LR{r}(D)}
\leq\sum_{k\in\Z}\sum_{\ell\in\Z}\norm{f_\ell g_{k-\ell}}_{\LR{r}(D)}\\
&\leq\sum_{k\in\Z}\sum_{\ell\in\Z} \norm{f_\ell}_{\LR{p}(D)}\norm{g_{k-\ell}}_{\LR{q}(D)}
=\norm{f}_{\AR(\torus;\LR{p}(D))}
\norm{g}_{\AR(\torus;\LR{q}(D))},
\end{align*}
where the last estimate is due to H\"older's inequality.
\end{proof}

\begin{prop}\label{prop:Interpolation_AR}
Let $D\subset\R^n$, $n\in\N$, be an open set and $p,q,r\in[1,\infty]$ such that
$(1-\theta)/p+\theta/q=1/r$ for some $\theta\in[0,1]$,
and let $f\in\AR(\torus;\LR{p}(D))\cap\AR(\torus;\LR{q}(D))$.
Then $f\in\AR(\torus;\LR{r}(D))$ and
\begin{align}\label{est:Interpolation_AR}
\norm{f}_{\AR(\torus;\LR{r}(D))}
\leq\norm{f}_{\AR(\torus;\LR{p}(D))}^{1-\theta}
\norm{f}_{\AR(\torus;\LR{q}(D))}^\theta.
\end{align}
\end{prop}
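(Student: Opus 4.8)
The plan is to reduce the statement to the classical interpolation inequality for Lebesgue norms, applied frequency-by-frequency, followed by a Hölder inequality for $\ell^1$-sequences. First I would write $f=\iFT_\torus\nb{(f_k)}$ with $(f_k)\in\lR{1}(\Z;\LR{p}(D))\cap\lR{1}(\Z;\LR{q}(D))$; this is exactly the hypothesis $f\in\AR(\torus;\LR{p}(D))\cap\AR(\torus;\LR{q}(D))$, together with the fact that the Fourier coefficients of $f$ with respect to the two norms coincide. For each $k\in\Z$, the classical Lebesgue interpolation inequality (itself a consequence of Hölder's inequality, given $(1-\theta)/p+\theta/q=1/r$) yields
\begin{align*}
\norm{f_k}_{\LR{r}(D)}\leq\norm{f_k}_{\LR{p}(D)}^{1-\theta}\norm{f_k}_{\LR{q}(D)}^{\theta}.
\end{align*}
In particular each $f_k$ lies in $\LR{r}(D)$.

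Next I would sum over $k$ and apply Hölder's inequality on $\Z$ with exponents $1/(1-\theta)$ and $1/\theta$ (with the usual conventions when $\theta\in\{0,1\}$, where the statement is trivial):
\begin{align*}
\sum_{k\in\Z}\norm{f_k}_{\LR{r}(D)}
\leq\sum_{k\in\Z}\norm{f_k}_{\LR{p}(D)}^{1-\theta}\norm{f_k}_{\LR{q}(D)}^{\theta}
\leq\Bp{\sum_{k\in\Z}\norm{f_k}_{\LR{p}(D)}}^{1-\theta}\Bp{\sum_{k\in\Z}\norm{f_k}_{\LR{q}(D)}}^{\theta}.
\end{align*}
The right-hand side is finite and equals $\norm{f}_{\AR(\torus;\LR{p}(D))}^{1-\theta}\norm{f}_{\AR(\torus;\LR{q}(D))}^{\theta}$, which shows both that $(f_k)\in\lR{1}(\Z;\LR{r}(D))$, i.e.\ $f\in\AR(\torus;\LR{r}(D))$, and that \eqref{est:Interpolation_AR} holds.

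There is no serious obstacle here; the only point requiring a word of care is that the Fourier coefficients of $f$ are well defined independently of which of the two ambient spaces one works in, so that writing a single sequence $(f_k)$ is legitimate — this follows since $\AR(\torus;X)\embeds \CR{}(\torus;X)$ and the coefficients are given by the integral formula $f_k=\int_\torus f(t)\e^{-ikt}\dt$, which is consistent across $\LR{p}(D)$ and $\LR{q}(D)$. One could alternatively deduce the proposition directly from Proposition \ref{prop:Hoelder_AR} by writing $\norm{f_k}_{\LR{r}}\leq\norm{f_k}_{\LR{p}}^{1-\theta}\norm{f_k}_{\LR{q}}^{\theta}$ and recognizing the resulting sum as an $\lR{1/(1-\theta)}$--$\lR{1/\theta}$ pairing, but the frequency-wise argument above is the most transparent route.
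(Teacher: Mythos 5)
Your proof is correct and follows essentially the same route as the paper: interpolate frequency-by-frequency via the classical Lebesgue interpolation inequality, then apply H\"older's inequality on $\Z$ with exponents $1/(1-\theta)$ and $1/\theta$ to the resulting sums. The additional remark about the consistency of the Fourier coefficients across the two ambient spaces is a sensible clarification but not a deviation from the paper's argument.
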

\begin{proof}
We have $f=\iFT_\torus\nb{(f_k)}$
for an element $(f_k)\in\lR{1}(\Z;\LR{p}(D)\cap\LR{q}(D))$.
The classical interpolation inequality for Lebesgue spaces yields
\[
\norm{f}_{\AR(\torus;\LR{r}(D))}
=\sum_{k\in\Z} \norm{f_k}_{\LR{r}(D)}
\leq\sum_{k\in\Z}\norm{f_k}_{\LR{p}(D)}^{1-\theta}
\norm{f_k}_{\LR{q}(D)}^\theta
\leq\norm{f}_{\AR(\torus;\LR{p}(D))}^{1-\theta}
\norm{f}_{\AR(\torus;\LR{q}(D))}^\theta,
\]
where the last estimate follows from H\"older's inequality on $\Z$.
\end{proof}

\section{Embedding theorem}\label{sec:EmbeddingThm}

This section deals with embedding properties of Sobolev spaces of time-periodic functions.
The embedding theorem below is a refinement of \cite[Theorem 4.1]{GaldiKyed_TPflowViscLiquidpBody_2018}
adapted to the time-scaling employed in \eqref{sys:RotatingOseen_Dimensionless}.
Clearly, embeddings of the steady-state part $\proj\uvel$ are independent of the actual period. 
Therefore, we only consider the case of purely periodic functions.
For the sake of generality, we establish the following theorem in arbitrary dimension $n\geq 2$.

\begin{thm}\label{thm:EmbeddingThm_WholeSpace}
Let $n\geq 2$, $\tay>0$ and $q\in(1,\infty)$.
For $\alpha\in[0,2]$ with $\alpha q<2$ and $(2-\alpha) q <n$ let
\[
r_0\coloneqq \frac{2q}{2-\alpha q}, \qquad 
p_0\coloneqq \frac{nq}{n-(2-\alpha)q}, 
\]
and for $\beta\in[0,1]$ with $\beta q<2$ and $(1-\beta) q<n$ let
\[
r_1\coloneqq \frac{2q}{2-\beta q}, \qquad
p_1\coloneqq \frac{nq}{n-(1-\beta)q}.
\]
Then the inequality
\begin{align}\label{est:EmbeddingThm_WholeSpace}
\tay^{\alpha/2}\norm{\uvel}_{\LR{r_0}(\torus;\LR{p_0}(\Rn))}
+\tay^{\beta/2}\norm{\grad\uvel}_{\LR{r_1}(\torus;\LR{p_1}(\Rn))}
&\leq \Cc[const:EmbeddingTheorem_WholeSpace]{C} 
\bp{
\tay\norm{\partial_t\uvel}_q
+\norm{\grad^2\uvel}_q
}
\end{align}
holds
for all $\uvel\in\projcompl\WSR{1,2}{q}(\torus\times\Rn)$
and a constant 
$\const{const:EmbeddingTheorem_WholeSpace}=\const{const:EmbeddingTheorem_WholeSpace}(n,q,\alpha,\beta)>0$.
\end{thm}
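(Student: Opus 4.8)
The plan is to work on the Fourier side with respect to time and to exploit the fact that for a purely periodic function $\uvel$ the zeroth Fourier coefficient vanishes, so that $\snorm{k}\geq 1$ for all modes appearing. Writing $\uvel(t,x)=\sum_{k\neq 0}\uvel_k(x)\e^{ikt}$, the equation satisfied by each mode is, morally, $ik\tay\uvel_k-\Delta\uvel_k=\uf_k$, i.e. a family of (complex) resolvent problems for the Laplacian in $\Rn$ with resolvent parameter $i\tay k$. First I would record the scalar multiplier estimates: for the whole space one has, via the explicit Fourier symbol $\snorm{\xi}^2/(\snorm{\xi}^2+i\tay k)$ together with the fact that $\snorm{\xi}^{2-\alpha q}$ -type weights are controlled by the Hardy--Littlewood--Sobolev / Mikhlin machinery, the bounds
\begin{align*}
\norm{\grad^2\uvel_k}_{\LR{q}(\Rn)}+\tay\snorm{k}\,\norm{\uvel_k}_{\LR{q}(\Rn)}&\leq c\,\norm{\uf_k}_{\LR{q}(\Rn)},\\
(\tay\snorm{k})^{\alpha/2}\norm{\uvel_k}_{\LR{p_0}(\Rn)}+(\tay\snorm{k})^{\beta/2}\norm{\grad\uvel_k}_{\LR{p_1}(\Rn)}&\leq c\,\norm{\uf_k}_{\LR{q}(\Rn)},
\end{align*}
with a constant $c=c(n,q,\alpha,\beta)$ independent of $k$ and $\tay$. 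The second line is the core elliptic estimate: it follows by interpolating the $\LR{q}$--$\WSR{2}{q}$ bound against the $\LR{q}$--$\LR{q}$ bound (with the gain of $\tay\snorm{k}$) and then applying the Sobolev embedding $\WSR{2-\alpha q/??}{q}\embeds\LR{p_0}$, tracking exponents so that $r_0,p_0,r_1,p_1$ come out as stated; the homogeneity of the whole-space symbol is what makes the constant $\tay$-uniform.

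Next I would transfer these mode-wise bounds to the mixed norm $\LR{r_0}(\torus;\LR{p_0})$. The key observation is that $\LR{r}(\torus)$ norms in time cannot be read off directly from $\ell^1$ summability of Fourier coefficients, so instead I would use the transference principle / Marcinkiewicz multiplier theorem on the group $\grp=\torus\times\Rn$: the operators $\uf\mapsto\uvel$, $\uf\mapsto\grad^2\uvel$, $\uf\mapsto\tay\pt\uvel$, $\uf\mapsto\tay^{\alpha/2}\uvel$ (into $\LR{p_0}$), $\uf\mapsto\tay^{\beta/2}\grad\uvel$ (into $\LR{p_1}$) are all Fourier multiplier operators on $\grp$ whose symbols, restricted to $\snorm{k}\geq 1$, satisfy the discrete-in-$k$ / continuous-in-$\xi$ Marcinkiewicz condition uniformly in $\tay$; the projection $\projcompl$ simply restricts to $k\neq 0$, where the symbols are smooth. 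One then obtains $\LR{r}(\torus;\LR{p}(\Rn))$ boundedness for $1<r,p<\infty$ — note $r_0,r_1,p_0,p_1\in(1,\infty)$ under the stated constraints $\alpha q<2$, $(2-\alpha)q<n$, etc. — by the vector-valued Marcinkiewicz theorem (e.g. as in the companion paper \cite{EiterKyed_etpfslns_2018} or \cite{GaldiKyed_TPflowViscLiquidpBody_2018}). The scaling factors $\tay^{\alpha/2}$ and $\tay^{\beta/2}$ are exactly what is needed to absorb the $\tay\snorm{k}$ weights and keep the multiplier bounds $\tay$-independent, which is the whole point of the statement.

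Assembling: combining the multiplier bounds for the five operators gives
\[
\tay^{\alpha/2}\norm{\uvel}_{\LR{r_0}(\torus;\LR{p_0})}+\tay^{\beta/2}\norm{\grad\uvel}_{\LR{r_1}(\torus;\LR{p_1})}\leq c\,\norm{\uf}_q,
\]
and since $\uf=\tay\pt\uvel-\Delta\uvel$ for $\uvel\in\projcompl\WSR{1,2}{q}(\torus\times\Rn)$, the right-hand side is bounded by $c(\tay\norm{\pt\uvel}_q+\norm{\grad^2\uvel}_q)$, which is \eqref{est:EmbeddingThm_WholeSpace}. A density argument (the estimate is first proved for $\uvel\in\projcompl\CRci(\torus\times\Rn)$, then extended to the closure) completes the proof.

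\textbf{Main obstacle.} The delicate point is the uniformity in $\tay$ of the Marcinkiewicz constants together with the correct bookkeeping of the Sobolev exponents: one must check that the symbol $m_{k,\tay}(\xi)=(\tay k)^{\alpha/2}\snorm{\xi}^{?}/(\snorm{\xi}^2+i\tay k)$ and its $\xi$-derivatives (and finite differences in $k$) satisfy the homogeneous Mikhlin bounds with constants that do not see $\tay$ — this works because the change of variables $\xi\mapsto(\tay\snorm{k})^{1/2}\xi$ turns the symbol into a $\tay$-free one, but only after verifying that the remaining $\snorm{\xi}$-power is homogeneous of the right degree so that the Lebesgue exponents match $p_0$ resp. $p_1$ rather than something $\tay$-dependent. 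Getting $\alpha\in[0,2]$, $\beta\in[0,1]$ and the endpoint-type constraints $\alpha q<2$, $(2-\alpha)q<n$ to land precisely on admissible Marcinkiewicz exponents is where the care lies; everything else is routine multiplier theory.
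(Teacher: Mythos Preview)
Your proposal has a genuine gap at the step where you ``transfer mode-wise bounds to the mixed norm $\LR{r_0}(\torus;\LR{p_0})$.'' The Marcinkiewicz multiplier theorem (scalar or vector-valued, via transference or not) is an $\LR{q}\to\LR{q}$ statement: it tells you that a multiplier operator is bounded on $\LR{q}(\grp)$ for each $q\in(1,\infty)$, but it does \emph{not} change the Lebesgue exponent. Your target operator $\uf\mapsto\tay^{\alpha/2}\uvel$ must land in $\LR{r_0}(\torus;\LR{p_0}(\Rn))$ with $r_0,p_0>q$, so this is a fractional-integration-type bound, not a multiplier bound. Mode-wise estimates of the form $(\tay\snorm{k})^{\alpha/2}\norm{\uvel_k}_{\LR{p_0}}\leq c\norm{\uf_k}_{\LR{q}}$ are correct, but they do not sum to an $\LR{r_0}(\torus)$ bound on the reconstructed function; knowing $\norm{\uvel_k}_{\LR{p_0}}\lesssim\snorm{k}^{-\alpha/2}\norm{\uf_k}_{\LR{q}}$ pointwise in $k$ gives you nothing about $\LR{r_0}$ in time unless $r_0=2$ (Plancherel) or you have $\ell^1$ summability.

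The paper resolves this by an explicit factorization of the symbol: it writes
\[
\frac{1-\delta_\Z(k)}{\snorm{\xi}^2+i\tay k}
=\tay^{-\alpha/2}\cdot\snorm{\xi}^{\alpha-2}\cdot(1-\delta_\Z(k))\snorm{k}^{-\alpha/2}\cdot M_\tay(k,\xi),
\]
where $M_\tay$ is a genuine Marcinkiewicz multiplier on $\grp$ (hence $\LR{q}\to\LR{q}$, uniformly in $\tay$), while the two remaining factors are handled \emph{separately} as convolution operators: $\iFT_{\Rn}[\snorm{\xi}^{\alpha-2}]\ast_{\Rn}$ is a Riesz potential, bounded $\LR{q}(\Rn)\to\LR{p_0}(\Rn)$ by Hardy--Littlewood--Sobolev, and $\iFT_\torus[(1-\delta_\Z)\snorm{k}^{-\alpha/2}]\ast_\torus$ is a fractional integral on $\torus$ whose kernel lies in $\LR{1/(1-\alpha/2),\infty}(\torus)$, hence bounded $\LR{q}(\torus)\to\LR{r_0}(\torus)$ by weak Young. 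Minkowski's inequality then combines the two. This decomposition---isolating the exponent-changing pieces as explicit convolutions and leaving only an $\LR{q}$-bounded multiplier remainder---is the missing idea in your outline.
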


\begin{proof}
Since the proof is analogue to 
\cite[Proof of Theorem 4.1]{GaldiKyed_TPflowViscLiquidpBody_2018},
we merely give a brief sketch here.
Without restriction we may assume $\uvel\in\SR(\grp)$.
Due to the assumption $\uvel=\projcompl\uvel$, 
we have $\FT_\grp\nb{\uvel}=\np{1-\delta_\Z}\FT_\grp\nb{\uvel}$,
where $\delta_\Z$ is the delta distribution on $\Z$.
Utilizing the Fourier transform, 
we thus derive the identity
\begin{align}\label{eq:EmbeddingThm_WholeSpace_uRepresentation}
\begin{split}
\uvel
&=\iFT_\grp\Bb{
\frac{1-\delta_\Z(k)}{\snorm{\xi}^2+i\tay k} \FT_G\bb{\tay\partial_t\uvel-\Delta\uvel}
} \\
&=\tay^{-\alpha/2}
\iFT_{\Rn}\bb{
\snorm{\xi}^{\alpha-2}
}
\ast_{\Rn}
\iFT_{\torus}\bb{
\np{1-\delta_\Z}\snorm{k}^{-\alpha/2}
}
\ast_{\torus}
F,
\end{split}
\end{align}
where 
\[
F\coloneqq
\iFT_\grp \Bb{M_\tay(k,\xi)\FT_G\bb{\tay\partial_t\uvel-\Delta\uvel}},
\qquad
M_\tay(k,\xi)
\coloneqq\frac
{\snorm{\tay k}^{\alpha/2}\snorm{\xi}^{2-\alpha}\np{1-\delta_\Z(k)}}
{\snorm{\xi}^2+i\tay k}.
\]
Employing the so-called transference principle for Fourier multipliers 
(see \cite{EdwardsGaudryBook,EiterKyed_tplinNS_PiFbook})
together with the Marcinkiewicz multiplier theorem,
one readily verifies that $M_\tay$ is an $\LR{q}(\grp)$ multiplier for any $q\in(1,\infty)$
such that
\[
\norm{F}_{q}\leq\Cc[const:EmbeddingTheorem_WholeSpace:Mmultiplier]{c}\norm{\tay\partial_t\uvel-\Delta\uvel}_{q}
\leq\const{const:EmbeddingTheorem_WholeSpace:Mmultiplier}\bp{\tay\norm{\partial_t\uvel}_q+\norm{\grad^2\uvel}_{q}}
\]
with $\const{const:EmbeddingTheorem_WholeSpace:Mmultiplier}$ independent of $\tay$.
Moreover, when we chose $[-\pi,\pi)$ as a realization of $\torus$,
we obtain
\[
\gamma_\alpha(t)\coloneqq
\iFT_{\torus}\bb{
\np{1-\delta_\Z}\snorm{k}^{-\alpha/2}
}(t)
=\Cc{c}t^{-1+\alpha/2}+h(t),
\]
for some $h\in\CRi(\torus)$; see for example \cite[Example 3.1.19]{Grafakos1}.
In particular, this yields $\gamma_\alpha\in\LR{\frac{1}{1-\alpha/2},\infty}(\torus)$,
so that Young's inequality implies that the mapping $\varphi\mapsto\gamma_\alpha\ast\varphi$
extends to a bounded operator $\LR{q}(\torus)\to\LR{r_0}(\torus)$.
Moreover, it is well known that the mapping
$\varphi\mapsto\iFT_{\Rn}\bb{\snorm{\xi}^{\alpha-2}}\ast\varphi$ extends to a bounded operator
$\LR{q}(\Rn)\to\LR{p_0}(\Rn)$; see \cite[Theorem 6.1.13]{Grafakos2}.
Recalling \eqref{eq:EmbeddingThm_WholeSpace_uRepresentation}, we thus have
\begin{align*}
\tay^{\alpha/2}\norm{\uvel}_{\LR{r_0}(\torus;\LR{p_0}(\Rn))}
&=\Bp{\int_{\torus}\normL{\iFT_{\Rn}\bb{
\snorm{\xi}^{\alpha-2}
}
\ast_{\Rn}
\gamma_\alpha
\ast_{\torus}
F(t,\cdot)}_{p_0}^{r_0}\,\dt}^{\frac{1}{r_0}} \\
&\leq\Cc{c}\Bp{\int_{\torus}\norm{
\gamma_\alpha
\ast_{\torus}
F(t,\cdot)}_{q}^{r_0}\,\dt}^{\frac{1}{r_0}} 
\leq\Cc{c}\Bp{\int_{\Rn}\norm{
\gamma_\alpha
\ast_{\torus}
F(\cdot,x)}_{r_0}^{q}\,\dx}^{\frac{1}{q}} \\
&\leq\Cc{c}\norm{F}_{q}
\leq\Cc{c}\bp{\tay\norm{\partial_t\uvel}_{q}+\norm{\grad^2\uvel}_{q}},
\end{align*}
where Minkowski's integral inequality is used in the second estimate. 
This is the asserted inequality for $\uvel$.
The estimate of $\grad\uvel$ follows in the same way.
\end{proof}

\begin{rem}\label{rem:EquivNormTay_WholeSpace}
Note that the term on the right-hand side of \eqref{est:EmbeddingThm_WholeSpace} defines
a norm equivalent to $\norm{\cdot}_{1,2,q}$ on $\projcompl\WSR{1,2}{q}(\torus\times\Omega)$
due to Poincar\'e's inequality on $\torus$.
\end{rem}

\begin{rem}\label{rem:EmbeddingTheorem_ExtDomain}
Theorem \ref{thm:EmbeddingThm_WholeSpace} can be generalized to the setting of an exterior domain
$\Omega\subset\Rn$ by means of Sobolev extensions.
However, to maintain estimate \eqref{est:EmbeddingThm_WholeSpace}, 
one has to construct a specific extension operator that respects the homogeneous 
second-order Sobolev norm. 
To this end, one can make use of results from \cite{Burenkov_ExtensionFunctionsPreservationSeminorms}.
\end{rem}

\section{Linear theory}\label{sec:linear}
This section is dedicated to the investigation of the 
resolvent problem \eqref{sys:RotatingOseen_resolvent} and the linear time-periodic problem \eqref{sys:RotatingOseen_linear}. 
After having shown Theorem \ref{thm:RotatingOseen_resolvent},
we establish Theorem \ref{thm:RotatingOseen_linear} as an immediate consequence hereof.

\subsection{The whole space}
To study the problems \eqref{sys:RotatingOseen_linear} and
\eqref{sys:RotatingOseen_resolvent} in an exterior domain,
we first consider the case $\Omega=\R^3$.
In the whole-space setting one can namely change coordinates back to the non-rotating inertial frame and thereby reduce
the study of \eqref{sys:RotatingOseen_linear} to an investigation of 
the time-periodic Oseen problem without rotation
terms, which was analyzed in 
\cite{Kyed_mrtpns,GaldiKyed_TPflowViscLiquidpBody_2018}. 
In this section, we set
\[
s_1\coloneqq\frac{2q}{2-q}, \qquad
s_2\coloneqq\frac{4q}{4-q}, \qquad
s_3\coloneqq\frac{8q}{8-q}.
\]
for appropriately fixed $q$.

\begin{thm}\label{thm:Oseen_WholeSpace}
Let $q\in(1,2)$ and $\rey,\,\tay,\,\theta>0$ with $\rey^2\leq\theta\tay$.
For every $f\in\LR{q}(\torus\times\R^3)^3$
there exists a solution $\np{\uvel,\upres}\in\TDR(\torus\times\R^3)^{3+1}$ to 
\begin{align}\label{sys:Oseen_WholeSpace_Tay}
\begin{pdeq}
\tay\partial_t \uvel- \Delta \uvel - \rey \partial_1 \uvel + \grad \upres &= f
& \tin \torus\times\R^3, \\
\Div\uvel&=0
& \tin \torus\times\R^3,
\end{pdeq}
\end{align}
with
$\partial_t\uvel,\grad^2\uvel,\,\grad\upres\in\LR{q}(\torus\times\R^3)$.
Moreover, there exist constants
$\Cc[const:Oseen_WholeSpace_steadystate]{C}=\const{const:Oseen_WholeSpace_steadystate}\np{q}>0$
and
$\Cc[const:Oseen_WholeSpace_purelyperiodic]{C}=\const{const:Oseen_WholeSpace_purelyperiodic}\np{q,\theta}>0$
such that
\begin{align}
\norm{\grad^2\proj\uvel}_{q}
+\rey\norm{\partial_1\proj\uvel}_{q}
+\rey^{1/2}\norm{\proj\uvel}_{s_1}
+\rey^{1/4}\norm{\grad\proj\uvel}_{s_2}
+\norm{\grad\proj\upres}_{q}
&\leq\const{const:Oseen_WholeSpace_steadystate}\norm{\proj f}_{q},
\label{est:Oseen_WholeSpace_steadystate}
\\
\tay\norm{\partial_t\projcompl\uvel}_{q}
+\norm{\grad^2\projcompl\uvel}_{q}
+\rey\norm{\partial_1\projcompl\uvel}_{q}
+\norm{\grad\projcompl\upres}_{q}
&\leq\const{const:Oseen_WholeSpace_purelyperiodic}\norm{\projcompl f}_{q}.
\label{est:Oseen_WholeSpace_purelyperiodic}
\end{align}
Additionally, if $\np{\wvel,\wpres}\in\TDR(\torus\times\R^3)^{3+1}$ is another solution to 
\eqref{sys:Oseen_WholeSpace_Tay}, 
then $\projcompl\uvel=\projcompl\wvel$, 
and $\proj\uvel-\proj\wvel$ is a polynomial in each component,
and $\upres-\wpres=\upres_0$, where $\upres_0(t,\cdot)$ is a polynomial for each $t\in\torus$.
\end{thm}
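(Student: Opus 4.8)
The plan is to decompose $(\uvel,\upres)$ into its time-average and its purely periodic part and to solve each separately. Since the differential operator in \eqref{sys:Oseen_WholeSpace_Tay} has $t$-independent coefficients, the projections $\proj$ and $\projcompl$ commute with it; applying $\proj$ and using $\proj\partial_t=0$ shows that $(\proj\uvel,\proj\upres)$ must solve the steady Oseen system $-\Delta\proj\uvel-\rey\partial_1\proj\uvel+\grad\proj\upres=\proj f$, $\Div\proj\uvel=0$ in $\R^3$, whereas $(\projcompl\uvel,\projcompl\upres)$ must solve the time-periodic Oseen system with right-hand side $\projcompl f$ and vanishing time-average. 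I would construct the two pieces separately and recombine them; the regularity of $\uvel,\upres$ claimed in the theorem is then read off from the two cases, and since each piece has $\LR q$-integrable derivatives it is in particular a tempered distribution.

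For the steady part I would invoke the classical whole-space $\LR q$ theory for the Oseen problem, e.g.\ \cite[Ch.~VII]{GaldiBookNew}: for $1<q<2$ and $\proj f\in\LR q(\R^3)^3$ there is a solution $(\proj\uvel,\proj\upres)$ with the anisotropically weighted bound \eqref{est:Oseen_WholeSpace_steadystate} and a constant depending only on $q$. The weights $\rey^{1/2},\rey^{1/4}$ and the exponents $s_1,s_2$ are precisely those produced by the anisotropic Oseen scaling that normalizes $\rey$ to $1$; one may also obtain the estimate directly from the Oseen fundamental solution. Since $\partial_t\proj\uvel=0$, this settles the steady part.

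For the purely periodic part I would pass to the Fourier transform on $\grp=\torus\times\R^3$. With $\hat v\coloneqq\FT_\grp[\projcompl\uvel]$ and $\hat{g}\coloneqq\FT_\grp[\projcompl f]$, the equations read $(i\tay k+\snorm{\xi}^2-i\rey\xi_1)\hat v+i\xi\,\FT_\grp[\projcompl\upres]=\hat{g}$ with $\xi\cdot\hat v=0$; taking the divergence recovers the pressure from the Helmholtz--Leray projection and leaves
\[
\hat v=\frac{1}{i\tay k+\snorm{\xi}^2-i\rey\xi_1}\,\Bp{\hat{g}-\frac{\xi\,(\xi\cdot\hat{g})}{\snorm{\xi}^2}} .
\]
It then remains to show that the symbols $\tay k\,(i\tay k+\snorm{\xi}^2-i\rey\xi_1)^{-1}$, $\snorm{\xi}^2(i\tay k+\snorm{\xi}^2-i\rey\xi_1)^{-1}$ and $\rey\xi_1(i\tay k+\snorm{\xi}^2-i\rey\xi_1)^{-1}$, each composed with the bounded Helmholtz multiplier $\delta_{jl}-\snorm{\xi}^{-2}\xi_j\xi_l$ and restricted to $k\in\Z\setminus\{0\}$, generate $\LR q(\grp)$ Fourier multipliers with operator norm controlled by $q$ and $\theta$. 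I would establish this via the transference principle (see \cite{EiterKyed_tplinNS_PiFbook}), which transfers the question to the analogous multipliers on $\R\times\R^3$, followed by the Marcinkiewicz multiplier theorem; here the hypothesis $\rey^2\le\theta\tay$ is exactly what is needed to dominate the mixed term $\rey\snorm{\xi}$ by $\tay\snorm{k}+\snorm{\xi}^2$ with a $\theta$-dependent constant, and $\snorm{k}\ge1$ removes the singularity at $\xi=0$. More economically, one can rescale $x\mapsto\tay^{-1/2}x$: this normalizes $\tay$ to $1$ and replaces $\rey$ by $\reyd\coloneqq\rey\tay^{-1/2}$, which satisfies $\reyd^2=\rey^2/\tay\le\theta$, so the known purely periodic Oseen estimate of \cite{Kyed_mrtpns,GaldiKyed_TPflowViscLiquidpBody_2018} applies uniformly; scaling back produces exactly the powers of $\tay$ appearing in \eqref{est:Oseen_WholeSpace_purelyperiodic}. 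This multiplier analysis is the main obstacle, though it is entirely parallel to the cited works.

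Finally, for the description of the solution set, let $(\wvel,\wpres)\in\TDR(\torus\times\R^3)^{3+1}$ be another solution. Then $(\uvel-\wvel,\upres-\wpres)$ satisfies the homogeneous system, so $\FT_\grp[\uvel-\wvel]$ is supported on the degeneracy set $\Z\times\{0\}$ of the symbol of \eqref{sys:Oseen_WholeSpace_Tay}; hence $\uvel-\wvel$ is a polynomial in $x$ in each time-mode. Analyzing the homogeneous system on this support set, together with the identity $\grad(\upres-\wpres)=\Delta(\uvel-\wvel)+\rey\partial_1(\uvel-\wvel)-\tay\partial_t(\uvel-\wvel)$ for the pressure, then yields the stated description: $\projcompl\uvel=\projcompl\wvel$, $\proj\uvel-\proj\wvel$ is a polynomial in each component, and $\upres-\wpres=\upres_0$ with $\upres_0(t,\cdot)$ a polynomial for each $t\in\torus$.
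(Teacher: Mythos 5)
Your proposal follows essentially the same route as the paper: split into steady-state and purely periodic parts via $\proj$ and $\projcompl$, invoke the classical whole-space Oseen $\LR{q}$-theory (\cite[Theorem VII.4.1]{GaldiBookNew}) for the steady part, handle the purely periodic part by the rescaling $x\mapsto\tay^{-1/2}x$ reducing to the normalized time-periodic Oseen system of \cite{Kyed_mrtpns}, and prove the description of the null set via the support of $\FT_\grp$. The paper goes straight to the rescaling you label the ``more economical'' alternative rather than carrying out a separate transference/Marcinkiewicz multiplier analysis, but the two are equivalent; the only small imprecision in your write-up is that after projecting out the pressure the symbol $i\tay k+\snorm\xi^2-i\rey\xi_1$ vanishes only at $(k,\xi)=(0,0)$, so $\supp\FT_\grp[\uvel-\wvel]\subset\{(0,0)\}$ (giving $\projcompl\uvel=\projcompl\wvel$ directly), while only the pressure's Fourier transform can be supported on all of $\Z\times\{0\}$.
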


\begin{proof}
We decompose \eqref{sys:Oseen_WholeSpace_Tay} into two problems by splitting
$\uvel=\proj\uvel+\projcompl\uvel\eqqcolon\uvels+\uvelp$ 
and $\upres=\proj\upres+\projcompl\upres\eqqcolon\upress+\upresp$.
For the steady-state part $(\uvels,\upress)$ we obtain the system
\[
\begin{pdeq}
- \Delta \uvels - \rey \partial_1 \uvels + \grad \upress &= \proj f
& \tin \R^3, \\
\Div\uvels&=0
& \tin \R^3,
\end{pdeq}
\]
which is the classical steady Oseen problem.
The existence of a time-independent solution $\np{\uvels,\upress}$ satisfying estimate \eqref{est:Oseen_WholeSpace_steadystate} 
is well known; see for example \cite[Theorem VII.4.1]{GaldiBookNew}. 
The remaining purely periodic part $\np{\uvelp,\upresp}$ must solve \eqref{sys:Oseen_WholeSpace_Tay},
but with purely periodic right-hand side $\projcompl f$. 
We define
\[
\Uvel(t,x)\coloneqq\,\uvelp(t, \tay^{-1/2} x), \quad
\Upres(t,x)\coloneqq\tay^{-1/2}\,\upresp(t, \tay^{-1/2}x), \quad
F(t,x)\coloneqq \tay^{-1}\,\projcompl f(t, \tay^{-1/2}x),
\]
which leads to the system
\[
\begin{pdeq}
\partial_t \Uvel- \Delta \Uvel - \widetilde{\rey} \partial_1 \Uvel + \grad \Upres &= F
& \tin \torus\times\R^3, \\
\Div\Uvel&=0
& \tin \torus\times\R^3,
\end{pdeq}
\]
where $\widetilde{\rey}=\rey\tay^{-1/2}$.
From \cite[Theorem 2.1]{Kyed_mrtpns} we conclude the existence of 
a unique solution $\np{\Uvel,\Upres}$ that satisfies
the estimate 
\[
\norm{\Uvel}_{1,2,q}+\norm{\grad\Upres}_{q}
\leq\Cc[const:Oseen_WholeSpace_purelyperiodicRescaled]{c}\norm{F}_{q},
\]
where $\const{const:Oseen_WholeSpace_purelyperiodicRescaled}$ is a polynomial in $\widetilde{\rey}$ 
and can thus be bounded uniformly in $\widetilde{\rey}\in(0,\sqrt{\theta}]$.
Estimate \eqref{est:Oseen_WholeSpace_purelyperiodic}
with the asserted dependency of the constant $\const{const:Oseen_WholeSpace_purelyperiodic}$ 
follows after reversing the applied scaling. 

The uniqueness statement is readily shown by means of
the Fourier transform on $\grp=\torus\times\R^3$. 
We consider \eqref{sys:Oseen_WholeSpace_Tay} with $f=0$ and apply
the divergence operator to \eqrefsub{sys:Oseen_WholeSpace_Tay}{1}. 
This yields $\Delta\upres=0$ 
and thus $\snorm{\xi}^2\FT_{\R^3}\nb{\upres(t,\cdot)}=0$ for all $t\in\torus$.
Therefore, we obtain $\supp \FT_{\R^3}\nb{\upres(t,\cdot)} \subset \set{0}$, so that 
$\upres(t,\cdot)$ is a polynomial for all $t\in\torus$.
Next we apply the Fourier transform to \eqrefsub{sys:Oseen_WholeSpace_Tay}{1} 
to deduce
$\np{i\tay k+\snorm{\xi}^2-i\xi_1}\FT_G\nb{\uvel}+i\xi\FT_\grp\nb{\upres}=0$.
Multiplying with the symbol of the Helmholtz projection $\idmatrix-\xi\otimes\xi/\snorm{\xi}^2$
and utilizing $\Div\uvel=0$, we obtain
$\np{i\tay k+\snorm{\xi}^2-i\xi_1}\FT_G\nb{\uvel}=0$,
which yields $\supp \FT_G\bb{\uvel}\subset\{(0,0)\}$.
Since ${\projcompl\uvel}=\iFT_\grp\bb{\np{1-\delta_\Z}\FT_\grp\nb{\uvel}}$,
it follows that $\projcompl\uvel=0$, and that each component of $\proj\uvel$ is a polynomial.
This completes the proof.
\end{proof}

\begin{rem}\label{rem:TPOseen_WholeSpace_EquivNorm}
In the setting of Theorem \ref{thm:Oseen_WholeSpace}
we can write the estimate for the steady-state part 
$\np{\uvels,\upress}=\np{\proj\uvel,\proj\upres}$ 
and the purely periodic part 
$\np{\uvelp,\upresp}=\np{\projcompl\uvel,\projcompl\upres}$ 
in a more condensed way:
From the embeddings established in Theorem \ref{thm:EmbeddingThm_WholeSpace}
we deduce
\begin{align*}
\tay^{1/4}\norm{\uvelp}_{\LR{s_2}(\torus;\LR{s_1}(\R^3))}
+\tay^{1/8}\norm{\grad\uvelp}_{\LR{s_3}(\torus;\LR{s_2}(\R^3))}
\leq\Cc{C}\bp{\tay\norm{\partial_t\uvelp}_{\LR{q}(\torus\times\R^3)}
+\norm{\uvelp}_{\LR{q}(\torus\times\R^3)}}.
\end{align*}
Recalling Remark \ref{rem:EquivNormTay_WholeSpace},
we see that 
\eqref{est:Oseen_WholeSpace_steadystate} and \eqref{est:Oseen_WholeSpace_purelyperiodic}
can be formulated as
\begin{multline}\label{est:Oseen_WholeSpace_combined}
\tay\norm{\partial_t\uvel}_{q}
+\norm{\grad^2\uvel}_{q}
+\rey\norm{\partial_1\uvel}_{q} 
+\rey^{1/2}\norm{\uvel}_{\LR{s_2}(\torus;\LR{s_1}(\R^3))}\\
+\rey^{1/4}\norm{\grad\uvel}_{\LR{s_3}(\torus;\LR{s_2}(\R^3))} 
+\norm{\grad\upres}_{q}
\leq\Cc[const:Oseen_WholeSpace_combined]{C}
\norm{f}_{q}
\end{multline}
for a constant $\const{const:Oseen_WholeSpace_combined}=\const{const:Oseen_WholeSpace_combined}(q,\theta)$ 
as long as $\rey^2\leq\theta\tay$. 
\end{rem}
With Theorem \ref{thm:Oseen_WholeSpace} 
we now solve the linear problem \eqref{sys:RotatingOseen_linear}
for $\Omega=\R^3$ and $f\in\LR{q}(\torus\times\R^3)^3$.

\begin{thm}\label{thm:RotatingOseen_WholeSpace}
Let $q\in(1,2)$ and $\rey,\,\tay, \,\theta>0$ with $\rey^2\leq\theta\tay$.
For every $f\in\LR{q}(\torus\times\R^3)^3$
there exists a solution $\np{\uvel,\upres}\in\TDR(\torus\times\R^3)^{3+1}$ to 
\begin{align}\label{sys:RotatingOseen_WholeSpace}
\begin{pdeq}
\tay\np{\partial_t \uvel+\eone\wedge\uvel-\eone\wedge x\cdot\grad\uvel}
-\Delta \uvel - \rey \partial_1 \uvel + \grad \upres &= f
& \tin \torus\times\R^3, \\
\Div\uvel&=0
& \tin \torus\times\R^3,
\end{pdeq}
\end{align}
with
$\grad^2\uvel,\,\partial_1\uvel,\,\grad\upres\in\LR{q}(\torus\times\R^3)$.
Moreover, there exists a constant
$\Cc[const:RotatingOseen_WholeSpace]{C}
=\const{const:RotatingOseen_WholeSpace}\np{q,\theta}>0$
such that
\begin{align}\label{est:RotatingOseen_WholeSpace}
\begin{split}
\tay&\norm{\rotdertermsimple{\uvel}}_{\LR{q}(\torus\times\R^3)}
+\norm{\grad^2\uvel}_{\LR{q}(\torus\times\R^3)}
+\rey\norm{\partial_1\uvel}_{\LR{q}(\torus\times\R^3)} 
\\
&+\rey^{1/2}\norm{\uvel}_{\LR{s_2}(\torus;\LR{s_1}(\R^3))}
+\rey^{1/4}\norm{\grad\uvel}_{\LR{s_3}(\torus;\LR{s_2}(\R^3))} 
+\norm{\grad\upres}_{\LR{q}(\torus\times\R^3)}
\leq\const{const:RotatingOseen_WholeSpace}
\norm{f}_{\LR{q}(\torus\times\R^3)}.
\end{split}
\end{align}
Additionally, if $\np{\wvel,\wpres}\in\TDR(\torus\times\R^3)^{3+1}$ is another solution to 
\eqref{sys:RotatingOseen_WholeSpace}
with $\wvel\in\LR{r}(\torus\times\R^3)$ for some $r\in[1,\infty)$, 
then $\uvel=\wvel$, and $\upres-\wpres=\wpres_0$ for some spatially constant 
function $\wpres_0\colon\torus\to\R$.
\end{thm}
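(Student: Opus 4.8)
The strategy is to remove the rotation term by passing to the inertial (non-rotating) frame, apply Theorem~\ref{thm:Oseen_WholeSpace} there, and then transform back. Since by \eqref{eq:FrequenciesCoincide} the angular velocity $\tay$ in dimensionless form is exactly the time frequency, the change of coordinates will preserve $2\pi$-time-periodicity, which is the whole point of the compatibility assumption. Concretely, let $Q(t)$ be the rotation matrix about $\eone$ with $\dot Q(t)=\tay\, \eone\wedge Q(t)$, $Q(0)=\idmatrix$; because $\tay=1$ in these units, $Q$ is $2\pi$-periodic. Define
\begin{align*}
\tvvel(t,y)&\coloneqq Q(t)^\transpose\uvel\bp{t,Q(t)y}, \qquad
\tvpres(t,y)\coloneqq\upres\bp{t,Q(t)y},\qquad
\tf(t,y)\coloneqq Q(t)^\transpose f\bp{t,Q(t)y}.
\end{align*}
A direct computation (using $\snorm{Q(t)y}=\snorm{y}$, that $Q(t)$ commutes with $\Delta$ and with $\partial_1$ since the rotation axis is $\eone$, and that $\Div$ is invariant) shows that $\np{\tvvel,\tvpres}$ solves \eqref{sys:Oseen_WholeSpace_Tay} with right-hand side $\tf$: the three terms $\tay\np{\partial_t\uvel+\eone\wedge\uvel-\eone\wedge x\cdot\grad\uvel}$ collapse precisely to $\tay\,\partial_t\tvvel$ under this substitution. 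Moreover $\norm{\tf}_{\LR{q}(\torus\times\R^3)}=\norm{f}_{\LR{q}(\torus\times\R^3)}$ and, more generally, every norm appearing on the left-hand side of \eqref{est:RotatingOseen_WholeSpace} is invariant under the transformation $\np{\uvel,\upres}\mapsto\np{\tvvel,\tvpres}$, because each is built from $\LR{q}$- or mixed $\LR{r}(\torus;\LR{p})$-norms of $\uvel$, $\grad\uvel$, $\grad^2\uvel$, $\partial_1\uvel$, $\grad\upres$, and the orthogonal change of spatial variables together with the orthogonal rotation of the (vector- or tensor-valued) function leaves all such norms unchanged.

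Granting this, the proof is short. Given $f\in\LR{q}(\torus\times\R^3)^3$, form $\tf$, apply Theorem~\ref{thm:Oseen_WholeSpace} to obtain $\np{\tvvel,\tvpres}\in\TDR(\torus\times\R^3)^{3+1}$ solving \eqref{sys:Oseen_WholeSpace_Tay} with $\partial_t\tvvel,\grad^2\tvvel,\grad\tvpres\in\LR{q}$, subject to \eqref{est:Oseen_WholeSpace_steadystate}--\eqref{est:Oseen_WholeSpace_purelyperiodic}; equivalently, in the condensed form of Remark~\ref{rem:TPOseen_WholeSpace_EquivNorm}, to \eqref{est:Oseen_WholeSpace_combined}. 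Then set $\uvel(t,x)\coloneqq Q(t)\tvvel\bp{t,Q(t)^\transpose x}$ and $\upres(t,x)\coloneqq\tvpres\bp{t,Q(t)^\transpose x}$; by the computation above (run in reverse) this pair solves \eqref{sys:RotatingOseen_WholeSpace}, and by the norm invariance the bound \eqref{est:Oseen_WholeSpace_combined} for $\np{\tvvel,\tvpres}$ becomes exactly \eqref{est:RotatingOseen_WholeSpace} for $\np{\uvel,\upres}$, with $\const{const:RotatingOseen_WholeSpace}=\const{const:Oseen_WholeSpace_combined}$. One should note that in the reverse direction the identity $\snorm{\rotdertermsimple\uvel}=\snorm{\partial_t\tvvel\np{t,Q(t)^\transpose\cdot}}$ is what lets us recover the first term on the left-hand side of \eqref{est:RotatingOseen_WholeSpace}.

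For uniqueness, suppose $\np{\wvel,\wpres}\in\TDR(\torus\times\R^3)^{3+1}$ is another solution with $\wvel\in\LR{r}(\torus\times\R^3)$ for some $r\in[1,\infty)$. Transforming $\np{\wvel,\wpres}$ into the inertial frame as above yields a distributional solution $\np{\twvel,\twpres}$ of \eqref{sys:Oseen_WholeSpace_Tay} with $f=0$ and $\twvel\in\LR{r}(\torus\times\R^3)$; the uniqueness part of Theorem~\ref{thm:Oseen_WholeSpace} then forces $\twvel=\tvvel$ up to a polynomial in each component, and the $\LR{r}$-integrability of $\twvel-\tvvel$ kills the polynomial, so $\twvel=\tvvel$ and $\twpres-\tvpres=\upres_0$ with $\upres_0(t,\cdot)$ a polynomial for each $t$; spatial integrability of $\grad(\twpres-\tvpres)$ (present from the function class) forces $\upres_0(t,\cdot)$ to be constant in $x$. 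Transforming back gives $\uvel=\wvel$ and $\upres-\wpres=\wpres_0$ with $\wpres_0\colon\torus\to\R$ spatially constant, as claimed.

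\textbf{Main obstacle.} The only real work is the coordinate-change computation: verifying that the operator $\tay\np{\partial_t+\eone\wedge\cdot-\eone\wedge x\cdot\grad}$ is carried to $\tay\,\partial_t$ in the rotating frame, and checking carefully that $\Delta$, $\partial_1$, $\Div$ and the relevant norms are all compatible with the rotation $Q(t)$ about the $\eone$-axis. This is the standard Galdi--Kyed rotating-frame calculation, but one must be attentive that it is performed in the distributional sense (the coefficient $\eone\wedge x\cdot\grad$ is unbounded, so one works first on $\SR$-functions or on the Fourier side and extends by density/duality) and that the time-periodicity is genuinely preserved, which relies on $\tay=1$, i.e.\ on \eqref{eq:FrequenciesCoincide}.
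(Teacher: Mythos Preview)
Your proposal is correct and follows essentially the same approach as the paper: transform to the inertial frame via the rotation matrix $Q(t)$, apply Theorem~\ref{thm:Oseen_WholeSpace} together with the condensed estimate \eqref{est:Oseen_WholeSpace_combined}, and transform back. The only cosmetic difference is the orientation convention (you write $\tvvel(t,y)=Q(t)^\transpose\uvel(t,Q(t)y)$, the paper writes $\Uvel(t,y)=Q(t)\uvel(t,Q(t)^\transpose y)$), and one small slip in your uniqueness argument: you invoke ``spatial integrability of $\grad(\twpres-\tvpres)$'' to reduce the polynomial $\upres_0(t,\cdot)$ to a constant, but no integrability of $\grad\wpres$ is assumed for the second solution---instead, once $\uvel=\wvel$ the equation itself gives $\grad\upres=\grad\wpres$, whence $\upres-\wpres$ is spatially constant.
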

\begin{proof}
Let
\[
\rotmatrix(t)\coloneqq
\begin{pmatrix}
1 & 0 & 0 \\
0 & \cos(t) & -\sin(t) \\
0 & \sin(t) & \cos(t)
\end{pmatrix}
\]
be the matrix corresponding to the rotation with angular velocity $\eone$. 
Define 
\[
\begin{aligned}
\Uvel(t,y)&\coloneqq\rotmatrix(t)\uvel(t,\rotmatrix(t)^\transpose y),
\\
\Upres(t,y)&\coloneqq\upres(t,\rotmatrix(t)^\transpose y),
\\
F(t,y)&\coloneqq\rotmatrix(t) f(t,\rotmatrix(t)^\transpose y).
\end{aligned}
\]
with the new spatial variable $y=\rotmatrix(t)x$.
Due to 
\[
\partial_t\Uvel(t,y)=\rotmatrix(t) \rotderterm{\uvel(t,x)},
\]
the functions $\uvel$, $\upres$ and $f$ satisfy \eqref{sys:RotatingOseen_WholeSpace} 
if and only if
\[
\begin{pdeq}
\tay\partial_t \Uvel- \Delta \Uvel - \rey \partial_1 \Uvel + \grad \Upres &= F
& \tin \torus\times\R^3, \\
\Div\Uvel&=0
& \tin \torus\times\R^3.
\end{pdeq}
\]
The assertions in Theorem \ref{thm:RotatingOseen_WholeSpace} are now a direct consequence of Theorem \ref{thm:Oseen_WholeSpace}
and estimate \eqref{est:Oseen_WholeSpace_combined}.
\end{proof}

\begin{rem}
As for the corresponding steady-state problem (see for example \cite[Theorem VIII.8.1]{GaldiBookNew}), 
one can extend Theorem \ref{thm:RotatingOseen_WholeSpace} to the case of an exterior domain $\Omega$ 
for $f\in\LR{q}(\torus\times\Omega)$,
but it is not clear to the authors 
whether or not the constant in the resulting \textit{a priori} estimate 
can then be chosen independently of $\rey$ and $\tay$.
Observe that such an independence is obtained in the functional setting of Theorem \ref{thm:RotatingOseen_linear} 
where $f\in\AR(\torus;\LR{q}(\Omega))$. 
Since we solve the nonlinear problem \eqref{sys:RotatingOseen_Dimensionless} via a fixed-point iteration which requires $\rey$ and $\tay$
to be chosen sufficiently small, it crucial to obtain an estimate with the constant independent of $\rey$ and $\tay$.
\end{rem}

From Theorem \ref{thm:RotatingOseen_WholeSpace}
we can extract a similar result for the resolvent problem
\eqref{sys:RotatingOseen_resolvent} in the whole space.

\begin{thm}\label{thm:RotatingOseen_WholeSpace_resolvent}
Let $q\in(1,2)$, $k\in\Z$ and $\rey,\,\tay, \,\theta>0$ with $\rey^2\leq\theta\tay$.
For every $\vf\in\LR{q}(\R^3)^3$
there exists a solution $\np{\vvel,\vpres}\in\TDR(\R^3)^{3+1}$ to 
\begin{align}\label{sys:RotatingOseen_WholeSpace_resolvent}
\begin{pdeq}
\tay\rotdertermFT{\vvel}
-\Delta \vvel - \rey \partial_1 \vvel + \grad \vpres &= \vf
& \tin\R^3, \\
\Div\vvel&=0
& \tin\R^3,
\end{pdeq}
\end{align}
and a constant
$\Cc[const:RotatingOseen_WholeSpace_resolvent]{C}
=\const{const:RotatingOseen_WholeSpace_resolvent}\np{q,\theta}>0$
with
\begin{align}\label{est:RotatingOseen_WholeSpace_resolvent}
\begin{split}
\tay&\norm{\rotdertermFTsimple{\vvel}}_{q}
+\norm{\grad^2\vvel}_{q}
+\rey\norm{\partial_1\vvel}_{q}
\\
&\qquad \qquad \qquad\qquad+\rey^{1/2}\norm{\vvel}_{s_1}
+\rey^{1/4}\norm{\grad\vvel}_{s_2} 
+\norm{\grad\vpres}_{q}
\leq\const{const:RotatingOseen_WholeSpace_resolvent}
\norm{\vf}_{q}.
\end{split}
\end{align}
Additionally, if $\np{\wvel,\wpres}\in\SR(\R^3)^{3+1}$ 
is another solution to 
\eqref{sys:Oseen_WholeSpace_Tay}
with $\wvel\in\LR{r}(\Omega)$ for some $r\in[1,\infty)$,
then $\vvel=\wvel$, and $\vpres-\wpres$ is constant.
\end{thm}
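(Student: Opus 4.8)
The plan is to obtain this as a corollary of Theorem \ref{thm:RotatingOseen_WholeSpace} by viewing the single-frequency resolvent problem \eqref{sys:RotatingOseen_WholeSpace_resolvent} as the $k$-th time-Fourier mode of the time-periodic problem \eqref{sys:RotatingOseen_WholeSpace}. Given $\vf\in\LR{q}(\R^3)^3$ and $k\in\Z$, I would set $f(t,x)\coloneqq\e^{ikt}\vf(x)$; since the Haar measure on $\torus$ is normalized, $f\in\LR{q}(\torus\times\R^3)^3$ with $\norm{f}_{\LR{q}(\torus\times\R^3)}=\norm{\vf}_q$. Theorem \ref{thm:RotatingOseen_WholeSpace} then supplies a solution $(\uvel,\upres)\in\TDR(\torus\times\R^3)^{3+1}$ of \eqref{sys:RotatingOseen_WholeSpace} satisfying \eqref{est:RotatingOseen_WholeSpace}, and I would take $\vvel\coloneqq\FT_\torus\nb{\uvel}(k)$ and $\vpres\coloneqq\FT_\torus\nb{\upres}(k)$ as the candidate resolvent solution.

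Next I would check that $(\vvel,\vpres)$ solves \eqref{sys:RotatingOseen_WholeSpace_resolvent}. The $k$-th Fourier coefficient in time commutes with all spatial operators appearing in \eqref{sys:RotatingOseen_WholeSpace} — in particular with the unbounded-coefficient term $\eone\wedge x\cdot\grad$, which acts in $x$ only — and converts $\partial_t$ into multiplication by $ik$; hence applying $\FT_\torus\nb{\cdot}(k)$ to \eqref{sys:RotatingOseen_WholeSpace} yields exactly \eqref{sys:RotatingOseen_WholeSpace_resolvent} with right-hand side $\FT_\torus\nb{f}(k)=\vf$, together with $\Div\vvel=0$. For the estimate, every term on the left of \eqref{est:RotatingOseen_WholeSpace_resolvent} is the $\LR{q}(\R^3)$- or $\LR{s_i}(\R^3)$-norm of a $k$-th time-Fourier coefficient; by Minkowski's integral inequality it is bounded by the $\LR{1}(\torus;\LR{p}(\R^3))$-norm of the corresponding time-periodic quantity, and since $\torus$ has finite measure and $q,s_2,s_3>1$, the embeddings $\LR{q}(\torus),\LR{s_2}(\torus),\LR{s_3}(\torus)\embeds\LR{1}(\torus)$ let me dominate these in turn by the matching terms on the left of \eqref{est:RotatingOseen_WholeSpace}. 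This gives \eqref{est:RotatingOseen_WholeSpace_resolvent} with $\const{const:RotatingOseen_WholeSpace_resolvent}=\const{const:RotatingOseen_WholeSpace}$.

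For the uniqueness assertion I would argue in the reverse direction: if $(\wvel,\wpres)$ is another solution of \eqref{sys:RotatingOseen_WholeSpace_resolvent} with $\wvel\in\LR{r}(\R^3)$ for some $r\in[1,\infty)$, then $(\e^{ikt}\wvel(x),\e^{ikt}\wpres(x))$ solves \eqref{sys:RotatingOseen_WholeSpace} with the same data $f=\e^{ikt}\vf$, and $\e^{ikt}\wvel\in\LR{r}(\torus\times\R^3)$ because $\torus$ has finite measure. The uniqueness part of Theorem \ref{thm:RotatingOseen_WholeSpace} then forces $\uvel=\e^{ikt}\wvel$ and $\upres-\e^{ikt}\wpres=\wpres_0(t)$ for some spatially constant $\wpres_0\colon\torus\to\R$; projecting onto the $k$-th mode gives $\vvel=\wvel$ and $\vpres-\wpres=\FT_\torus\nb{\wpres_0}(k)$, a constant.

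The points requiring care are purely bookkeeping: justifying that the time-Fourier-coefficient map commutes with the differential operators and with passage to the relevant norms, so that the two estimates line up term by term, and, in the uniqueness step, that the competitor indeed lifts to an admissible time-periodic solution and that the residual pressure freedom of Theorem \ref{thm:RotatingOseen_WholeSpace} collapses to a constant after projection. I do not expect any genuine analytic obstacle, since all the hard work — the multiplier bounds and the Oseen theory — is already encapsulated in Theorem \ref{thm:RotatingOseen_WholeSpace}.
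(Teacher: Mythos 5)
Your proposal matches the paper's proof: lift $\vf$ to $f(t,x)=\e^{ikt}\vf(x)$, invoke Theorem \ref{thm:RotatingOseen_WholeSpace}, extract the $k$-th time-Fourier coefficient to get $(\vvel,\vpres)$, and recover uniqueness by lifting the competing solution $\e^{ikt}\wvel$ back to the time-periodic problem. The paper leaves the passage from \eqref{est:RotatingOseen_WholeSpace} to \eqref{est:RotatingOseen_WholeSpace_resolvent} as a ``direct consequence,'' whereas you spell it out via Minkowski's integral inequality and the embeddings $\LR{p}(\torus)\embeds\LR{1}(\torus)$; that is a correct and welcome filling-in, not a different route.
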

\begin{proof}
First consider a solution  $(\vvel,\vpres)$ in the described function class.
Then the fields 
\[
\uvel(t,x)\coloneqq\e^{ikt}\vvel(x),
\qquad
\upres(t,x)\coloneqq\e^{ikt}\vpres(x),
\qquad
f(t,x)\coloneqq\e^{ikt}\vf(x),
\]
satisfy \eqref{sys:RotatingOseen_WholeSpace}.
Therefore, uniqueness of $\np{\vvel,\grad\vpres}$ follows from the uniqueness statement 
in Theorem \ref{thm:RotatingOseen_WholeSpace}.
To show existence, let $\vf\in\LR{q}(\R^3)$ and define $f\in\LR{q}(\torus\times\R^3)$ as above. 
Theorem \ref{thm:RotatingOseen_WholeSpace} yields the existence 
of a pair $(\uvel,\upres)$ that solves \eqref{sys:RotatingOseen_WholeSpace}. 
Then the $k$-th Fourier coefficients $\vvel(x)\coloneqq\FT_\torus\nb{\uvel(\cdot,x)}(k)$ and
$\vpres(x)\coloneqq\FT_\torus\nb{\upres(\cdot,x)}(k)$ 
satisfy \eqref{sys:RotatingOseen_WholeSpace_resolvent},
and estimate \eqref{est:RotatingOseen_WholeSpace_resolvent}
is a direct consequence of 
\eqref{est:RotatingOseen_WholeSpace}.
\end{proof}

\subsection{Uniqueness}

Next we show a uniqueness result for the resolvent problem \eqref{sys:RotatingOseen_resolvent}.

\begin{lem}\label{lem:RotatingOseen_Uniqueness_resolvent}
Let $\rey\geq 0$, $\tay>0$, $k\in\Z$, 
and let $\np{\vvel,\vpres}$ be a distributional solution to \eqref{sys:RotatingOseen_resolvent}
with $\vf=0$ and
$\grad^2\vvel,\,\partial_1\vvel,\,\grad\vpres \in\LR{q}(\Omega)$
for some $q\in(1,\infty)$ and $\vvel\in\LR{s}(\Omega)$ for some $s\in(1,\infty)$.
Then 
$\vvel=0$ and $\vpres$ is constant.
\end{lem}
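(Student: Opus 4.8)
The plan is to prove this by a localized energy estimate; the genuine difficulty will be the control of the far-field (annular) remainder terms, which is where the $\LR{s}$- and $\LR{q}$-integrability assumptions enter.

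First I would carry out a preparatory step. Since $\vf=0$ and the coefficients of the operator in \eqref{sys:RotatingOseen_resolvent} are smooth on every bounded subset of $\overline\Omega$, interior and boundary elliptic regularity for generalized Oseen systems make $(\vvel,\vpres)$ as regular as needed, so all integrations by parts below are legitimate. Next I would upgrade the integrability at spatial infinity: the Gagliardo--Nirenberg interpolation inequality applied to $\vvel\in\LR{s}(\Omega)$ and $\grad^2\vvel\in\LR{q}(\Omega)$ gives $\grad\vvel\in\LR{m}(\Omega)$ with $1/m=\tfrac12(1/s+1/q)$, and a subsequent Sobolev embedding (whose additive constant must vanish because $\vvel\in\LR{s}(\Omega)$ with $s<\infty$) places $\vvel$ in a whole scale of Lebesgue spaces; likewise, after subtracting a suitable constant, $\grad\vpres\in\LR{q}(\Omega)$ yields a Lebesgue bound on $\vpres$. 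When $\rey>0$ these far-field integrabilities could alternatively be sharpened by applying Theorem \ref{thm:RotatingOseen_WholeSpace_resolvent} to a cut-off, Bogovskii-corrected version of $(\vvel,\vpres)$, which coincides outside a large ball with the whole-space solution for a compactly supported $\LR{q}$ datum and hence obeys \eqref{est:RotatingOseen_WholeSpace_resolvent}.

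For the energy step I would fix $R_0>0$ with $\calb\subset\ball_{R_0}$ and, for $R\ge R_0$, use the \emph{radial} cut-offs $\cutoff_R(x)\coloneqq\cutoff(\snorm{x}/R)$ with $\cutoff\in\CRci([0,\infty))$, $\cutoff\equiv1$ on $[0,1]$ and $\cutoff\equiv0$ on $[2,\infty)$; write $A_R\coloneqq\ball_{2R}\setminus\ball_R$. Testing the momentum equation in \eqref{sys:RotatingOseen_resolvent} (with $\vf=0$) against $\cutoff_R^2\overline\vvel$ and taking real parts, the contributions of $\tay ik\vvel$ and of $\tay\eone\wedge\vvel$ drop out, the former being purely imaginary and $\eone\wedge(\cdot)$ being a real skew-symmetric matrix. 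The key observation is that the unbounded-coefficient term also contributes nothing to the real part: integrating by parts and using $\Div(\eone\wedge x)=0$ together with the fact that $\grad(\cutoff_R^2)$ is parallel to $x$ and hence orthogonal to $\eone\wedge x$, one checks that $\int_\Omega\cutoff_R^2\,\bp{(\eone\wedge x)\cdot\grad\vvel}\cdot\overline\vvel$ is purely imaginary, so \emph{no} commutator survives from the rotation term -- this is exactly why the cut-off is taken radial. Combining this with $\Div\vvel=0$, $\vvel|_{\partial\Omega}=0$ and the identity $\int_\Omega\grad(\cutoff_R^2)\cdot\overline\vvel=0$ (which lets one replace $\vpres$ by $\vpres-\langle\vpres\rangle_{A_R}$ and apply the Poincar\'e--Wirtinger inequality on $A_R$, with constant independent of $R$ by scaling), I expect to arrive at
\[
\int_\Omega\cutoff_R^2\snorm{\grad\vvel}^2\,\dx
\leq\frac{C}{R}\int_{A_R}\bp{\snorm{\grad\vvel}\,\snorm{\vvel}+\snorm{\vvel}^2}\,\dx
+C\,\norm{\grad\vpres}_{q;A_R}\,\norm{\vvel}_{q';A_R},
\]
with $1/q+1/q'=1$ and $C$ independent of $R$.

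Finally I would let $R\to\infty$. Using the integrability secured in the first step and H\"older's inequality on $A_R$, each term on the right should tend to $0$ along a suitable sequence $R_k\to\infty$, since the norms $\norm{\grad\vpres}_{q;A_{R_k}}$, $\norm{\grad\vvel}_{m;A_{R_k}}$ and the relevant Lebesgue norm of $\vvel$ on $A_{R_k}$ vanish while the conjugate factors remain bounded. The hard part will be precisely this bookkeeping of H\"older exponents -- verifying that the integrability provided by the interpolation and Sobolev steps is sufficient -- which is the only place the hypotheses $\vvel\in\LR{s}(\Omega)$ and $\grad^2\vvel,\grad\vpres\in\LR{q}(\Omega)$ get used in full. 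Granting it, monotone convergence gives $\int_\Omega\snorm{\grad\vvel}^2\,\dx=0$, hence $\grad\vvel\equiv0$; since $\Omega$ is connected and $\vvel=0$ on $\partial\Omega$, this forces $\vvel\equiv0$, and then \eqref{sys:RotatingOseen_resolvent} gives $\grad\vpres\equiv0$, so $\vpres$ is constant.
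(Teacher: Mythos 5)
Your argument is built from the same two ingredients as the paper's: (i) an improvement of the far-field integrability of $(\vvel,\vpres)$, and (ii) an energy identity exploiting that the boundary contributions of the rotation term on spheres vanish because $(\eone\wedge x)\cdot\nvec=0$ there. Your radial-cut-off observation is precisely the observation the paper makes via the divergence theorem in the derivation of \eqref{eq:RotatingOseen_Uniqueness_TestingE1wedgex}; whether one inserts a radial cut-off $\cutoff_R^2$ in the test function or integrates over $\Omega_R$ and sends $R\to\infty$ is a cosmetic choice. So the overall strategy is right.

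The genuine gap is in step (i), and it is not merely H\"older bookkeeping that can be "granted." Your primary route --- Gagliardo--Nirenberg between $\vvel\in\LR{s}$ and $\grad^2\vvel\in\LR{q}$, then a Sobolev step --- produces a fixed, bounded gain determined by $q$ and $s$, and for a large range of admissible pairs $(q,s)$ it simply does not place $\grad\vvel$ and $\vvel$ in dual enough Lebesgue spaces for the annular terms $\frac1R\int_{A_R}\snorm{\grad\vvel}\snorm{\vvel}$, $\frac1R\int_{A_R}\snorm{\vvel}^2$ and the pressure term to vanish as $R\to\infty$. (For instance, the $\partial_1$ contribution $\frac1R\int_{A_R}\snorm{\vvel}^2$ already requires $\vvel$ in $\LR{b}(\Omega)$ with $b$ no larger than about $3$, which your hypotheses do not give.) What the paper actually does is the route you mention only in passing: decompose $\vvel$ by cut-offs near $\partial\Omega$ and near infinity, correct with the Bogovski\u{\i} operator, use bounded-domain Stokes regularity for the inner piece and Theorem \ref{thm:RotatingOseen_WholeSpace_resolvent} for the outer piece, and --- crucially --- \emph{iterate} this bootstrap, since one application gains only a factor $3/2$ in the exponent and the whole-space theorem is restricted to exponents in $(1,2)$. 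The iteration is what delivers $\rotdertermFTsimple\vvel,\grad^2\vvel,\partial_1\vvel,\grad\vpres\in\LR{r}(\Omega)$ for \emph{all} $r\in(1,2)$, hence $\grad\vvel\in\LR{r}(\Omega)$ for $r\in(\tfrac32,6)$ and $\vvel\in\LR{r}(\Omega)$ for $r\in(3,\infty)$, after which the energy argument closes. You should therefore promote the Bogovski\u{\i}/whole-space route from an aside to the main mechanism, include the iteration, and note that it works for $\rey=0$ as well with the obvious Stokes analogue; GN/Sobolev alone does not reach the needed exponents.
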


\begin{proof}
We only consider the case $\rey>0$ here. 
The proof for $\rey=0$ can be shown in exactly the same way.
Fix a radius $R>0$ such that $\partial\ball_R\subset\Omega$, 
and define a ``cut-off'' function 
$\cutoff_0\in\CRci(\R^3)$ with 
$\cutoff_0(x)=1$ for $\snorm{x}\leq 2R$ and $\cutoff_0(x)=0$ for $\snorm{x}\geq 4R$. 
Set
\begin{align}\label{eq:RotatingOseen_Cutoff_InnerFunction}
\wvel\coloneqq\cutoff_0\vvel-\bogopr\np{\vvel\cdot\grad\cutoff_0}, 
\qquad
\wpres\coloneqq\cutoff_0\vpres
\end{align}
where $\bogopr$ denotes the Bogovski\u\i{} operator; see for example \cite[Section III.3]{GaldiBookNew}.
Then 
\[
\begin{pdeq}
- \Delta \wvel + \grad \wpres &= \wf
&& \tin \Omega_{4R}, \\
\Div\wvel&=0
&& \tin \Omega_{4R}, \\
\wvel&=0
&& \ton \partial\Omega_{4R},
\end{pdeq}
\]
with
\[
\wf\coloneqq
\bp{-\tay\rotdertermFT{\vvel}-\rey\partial_1\vvel}\cutoff_0
-2\grad\cutoff_0\cdot\grad\vvel-\Delta\cutoff_0\vvel+\grad\cutoff_0\vpres
+\Delta\bogopr(\grad\cutoff_0\cdot\vvel).
\]
From the assumptions, we obtain $\vvel\in\WSR{2}{q}(\Omega_{4R})$
and $\vpres\in\WSR{1}{q}(\Omega_{4R})$.  
Standard Sobolev embeddings 
imply $\vvel, \grad\vvel, \vpres\in\LR{\frac32 q}(\Omega_{4R})$.
Therefore, we also have $\wf\in\LR{r}(\Omega_{4R})$ 
for all $1< r\leq\frac{3}{2}q$.
From well-known regularity results for the Stokes problem in bounded domains 
(see \cite[Theorem IV.6.1]{GaldiBookNew})
we obtain $\wvel\in\WSR{2}{r}(\Omega_{4R})$ 
and $\grad\wpres\in\LR{r}(\Omega_{4R})$.
Since $\vvel=\wvel$ and $\vpres=\wpres$ on $\Omega_{2R}$,
this yields 
\begin{align}\label{el:RotatingOseen_Regularity_InnerFunction}
\np{\vvel,\vpres}\in
\WSR{2}{r}(\Omega_{2R})\times\WSR{1}{r}(\Omega_{2R})
\end{align}
for all $1< r\leq\frac{3}{2}q$.

Next consider another ``cut-off'' function $\cutoff_1\in\CRi(\R^3)$ with 
$\cutoff_1(x)=1$ for $\snorm{x}\geq 2R$ and $\cutoff_1(x)=0$ for $\snorm{x}\leq R$.
As above, we define 
\begin{align}\label{eq:RotatingOseen_Cutoff_OuterFunction}
\uvel\coloneqq\cutoff_1\vvel-\bogopr\np{\vvel\cdot\grad\cutoff_1},
\qquad
\upres\coloneqq\cutoff_1\vpres,
\end{align}
which satisfy the system
\begin{align}\label{sys:RotatingOseen_OuterFunction}
\begin{pdeq}
\tay\rotdertermFT{\uvel} -\Delta \uvel -\rey\partial_1\uvel + \grad \upres &= \uf
& \tin \R^3, \\
\Div\uvel&=0
& \tin \R^3,
\end{pdeq}
\end{align}
with
\begin{align*}
\uf&\coloneqq
\tay\np{\eone\wedge x\cdot\grad\cutoff_1}\vvel
-2\grad\cutoff_1\cdot\grad\vvel-\Delta\cutoff_1\vvel
+\rey\partial_1\cutoff_1\vvel
+\grad\cutoff_1\vpres -\Delta\bogopr(\vvel\cdot\grad\cutoff_1) 
\\
&
\quad+\rey\partial_1\bogopr(\vvel\cdot\grad\cutoff_1)
+\tay\rotdertermFT{\bogopr(\vvel\cdot\grad\cutoff_1)}.
\end{align*}
As above, we see $\uf\in\LR{r}(\R^3)$ for all $1< r\leq \frac{3}{2}q$.
Since we also have $\uvel\in\LR{s}(\R^3)$,
Theorem \ref{thm:RotatingOseen_WholeSpace_resolvent} implies 
\[
\rotdertermFTsimple{\uvel},\,\grad^2\uvel,\,\partial_1\uvel,\,\grad\upres
\in\LR{r}(\R^3)
\]
if additionally $r<2$.
Due to $\vvel=\uvel$ and $\vpres=\upres$ on $\ball^{2R}$, we have
\begin{align}\label{el:RotatingOseen_Regulartiy_OuterFunction}
\rotdertermFTsimple{\vvel},\,\grad^2\vvel,\,\partial_1\vvel,\,\grad\vpres
\in\LR{r}(\ball^{2R})
\end{align}
for $1< r\leq\frac{3}{2}q$ with $r<2$.
 
We combine \eqref{el:RotatingOseen_Regularity_InnerFunction} and
\eqref{el:RotatingOseen_Regulartiy_OuterFunction} to deduce
\begin{align}\label{el:RotatingOseen_Regulartiy_combined}
\rotdertermFTsimple{\vvel},\,\grad^2\vvel,\,\partial_1\vvel,\,\grad\vpres
\in\LR{r}(\Omega)
\end{align}
for $1<r\leq\frac{3}{2}q$ with $r<2$.
After repeating the above argument a sufficient number of times, we obtain
\eqref{el:RotatingOseen_Regulartiy_combined} for all $r\in(1,2)$.
Since $\vvel\in\LR{s}(\Omega)$, the Sobolev inequality further yields 
\[
\forall r\in\bp{\frac{3}{2},6}: \ \grad\vvel\in\LR{r}(\Omega), \qquad
\forall r\in(3,\infty) : \ \vvel\in\LR{r}(\Omega).
\]
In particular, we can employ the divergence theorem to compute
\[
\int_{\Omega_R} \Div \bb{\np{\eone\wedge x}\snorm{\vvel}^2} \,\dx
=\int_{\partial\Omega_R} \np{\eone\wedge x}\cdot \nvec \snorm{\vvel}^2 \,\dS
=\int_{\partial\ball_R} \np{\eone\wedge x}\cdot x R^{-1}\snorm{\vvel}^2 \,\dS
=0
\]
for any $R>0$ with $\partial\ball_R\subset\Omega$.
Passing to the limit $R\to\infty$, we obtain
\begin{align}\label{eq:RotatingOseen_Uniqueness_TestingE1wedgex}
\int_{\Omega} \Div \bb{\np{\eone\wedge x}\snorm{\vvel}^2} \,\dx=0.
\end{align}
By the above integrability properties, we can further multiply \eqrefsub{sys:RotatingOseen_resolvent}{1}
by $\vvel$ and integrate over $\Omega$. 
Utilizing \eqref{eq:RotatingOseen_Uniqueness_TestingE1wedgex} and integration by parts, we conclude
\begin{align*}
0 
&= \int_{\Omega}
\bp{
\tay \rotdertermFT{\vvel}
-\Delta\vvel
+\rey\partial_1\vvel
+\grad\vpres}
\cdot\vvel
\,\dx
\\
&= \int_{\Omega} 
\tay ik \,\snorm{\vvel}^2 
+\half\tay \Div \bb{\np{\eone\wedge x}\snorm{\vvel}^2}
-\Delta\vvel\cdot\vvel
+\half \rey\partial_1 \snorm{\vvel}^2
+\grad\vpres\cdot\vvel
\,\dx
\\
&= \tay ik\int_{\Omega}\snorm{\vvel}^2\,\dx
+\int_{\Omega}
\snorm{\grad\vvel}^2 \,\dx. 
\end{align*}
This implies $\grad\vvel=0$. 
The imposed boundary conditions thus yield $\vvel=0$.
Finally, \eqrefsub{sys:RotatingOseen_resolvent}{1} leads to $\grad\vpres=0$,
and the proof is complete.
\end{proof}

\subsection{A priori estimate}

Next we establish an \textit{a priori} estimate for the solution to 
the resolvent problem \eqref{sys:RotatingOseen_resolvent}.

\begin{lem}\label{lem:RotatingOseen_ResolventEst_ErrorTerms}
Let $q\in(1,2)$, $k\in\Z$ and $\rey,\,\tay, \,\theta>0$ with $\rey^2\leq\theta\tay$.
Moreover, let $\vf\in\LR{q}(\Omega)$ and $R>0$ such that $\partial\ball_R\subset\Omega$. 
Let $\np{\vvel,\vpres}\in\LRloc{1}(\Omega)$ with
\begin{align}\label{el:RotatingOseen_resolventProblem_fctClass}
\rotdertermFTsimple\vvel,\,
\grad^2\vvel,\,
\partial_1\vvel,\,
\grad\vpres\in\LR{q}(\Omega),\quad
\vvel\in\LR{s_1}(\Omega),\quad
\grad\vvel\in\LR{s_2}(\Omega)
\end{align}
be a solution to \eqref{sys:RotatingOseen_resolvent}.
Then there exists a constant
$\Cc[const:RotatingOseen_linearEst_ErrorTerms]{C}
=\const{const:RotatingOseen_linearEst_ErrorTerms}(\Omega,q,\theta,R)>0$
such that
\begin{align}\label{est:RotatingOseen_ResolventEst_ErrorTerms}
\begin{split}
\tay\norm{\rotdertermFTsimple\vvel}_{q}
&+\norm{\grad^2\vvel}_{q}
+\rey\norm{\partial_1\vvel}_{q}
+\rey^{1/2}\norm{\vvel}_{s_1}
+\rey^{1/4}\norm{\grad\vvel}_{s_2} 
+\norm{\grad\vpres}_{q}\\
\leq\const{const:RotatingOseen_linearEst_ErrorTerms}
\bp{\norm{\vf}_{q}&+\np{1+\rey+\tay}\norm{\vvel}_{1,q;\Omega_{4R}}
+\tay\snorm{k}\,\norm{\vvel}_{-1,q;\Omega_{4R}}
+\norm{\vpres}_{q;\Omega_{4R}}}.
\end{split}
\end{align}
\end{lem}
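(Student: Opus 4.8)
The plan is to derive the \emph{a priori} estimate \eqref{est:RotatingOseen_ResolventEst_ErrorTerms} by a ``cut-and-paste'' argument combining the whole-space estimate from Theorem \ref{thm:RotatingOseen_WholeSpace_resolvent} with interior regularity for the Stokes problem in the bounded region near $\partial\Omega$. Concretely, I would fix cut-off functions $\cutoff_0,\cutoff_1\in\CRi(\R^3)$ as in the proof of Lemma \ref{lem:RotatingOseen_Uniqueness_resolvent}: $\cutoff_1\equiv 1$ on $\ball^{2R}$, $\cutoff_1\equiv 0$ on $\ball_R$, and $\cutoff_0=1-\cutoff_1$ (suitably adjusted so that the inner piece is supported in $\Omega_{4R}$). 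Then I set $\uvel\coloneqq\cutoff_1\vvel-\bogopr(\vvel\cdot\grad\cutoff_1)$, $\upres\coloneqq\cutoff_1\vpres$, which — using $\Div\vvel=0$ and the defining property of the Bogovski\u\i{} operator — solves the whole-space resolvent problem \eqref{sys:RotatingOseen_WholeSpace_resolvent} with right-hand side $\uf$ consisting of the commutator terms collected in the uniqueness proof, and set $\wvel\coloneqq\cutoff_0\vvel-\bogopr(\vvel\cdot\grad\cutoff_0)$, $\wpres\coloneqq\cutoff_0\vpres$, which solves a steady Stokes problem on the bounded domain $\Omega_{4R}$ with a similar right-hand side $\wf$.

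The key steps are then: (i) estimate $\uvel,\upres$ via Theorem \ref{thm:RotatingOseen_WholeSpace_resolvent}, which gives
\[
\tay\norm{\rotdertermFTsimple\uvel}_q+\norm{\grad^2\uvel}_q+\rey\norm{\partial_1\uvel}_q+\rey^{1/2}\norm{\uvel}_{s_1}+\rey^{1/4}\norm{\grad\uvel}_{s_2}+\norm{\grad\upres}_q\leq\Cc{c}\,\norm{\uf}_q,
\]
with $\Cc{c}=\Cc{c}(q,\theta)$ independent of $\rey,\tay$; (ii) estimate $\wvel,\wpres$ via the $\LR{q}$ theory for the Stokes system in bounded domains (\cite[Theorem IV.6.1]{GaldiBookNew}), namely $\norm{\grad^2\wvel}_q+\norm{\grad\wpres}_q\leq\Cc{c}\,\norm{\wf}_q$; (iii) bound the two forcing terms $\norm{\uf}_q$ and $\norm{\wf}_q$ by $\Cc{c}\big(\norm{\vf}_q+(1+\rey+\tay)\norm{\vvel}_{1,q;\Omega_{4R}}+\tay\snorm{k}\norm{\vvel}_{-1,q;\Omega_{4R}}+\norm{\vpres}_{q;\Omega_{4R}}\big)$, using that $\grad\cutoff_i$ is supported in the annulus $R\le\snorm{x}\le 2R$ (actually $2R\le\snorm{x}\le 4R$ for $\cutoff_0$), the boundedness of $\bogopr\colon\WSRN{1}{q}\to\WSRN{2}{q}$ and $\bogopr\colon\LR{q}\to\WSRN{1}{q}$ on the bounded region, and — crucially — the observation that the rotation term $\tay(\eone\wedge x\cdot\grad\cutoff_i)\vvel$ and the analogous Bogovski\u\i{} term only involve $\vvel$ (not its derivatives) on a \emph{bounded} set where $\snorm{\eone\wedge x}\le 4R$, so it is controlled by $\tay\norm{\vvel}_{q;\Omega_{4R}}$; the term $\tay ik\,\bogopr(\vvel\cdot\grad\cutoff_i)$ is handled by $\tay\snorm{k}\norm{\bogopr(\vvel\cdot\grad\cutoff_i)}_q\le\Cc{c}\,\tay\snorm{k}\norm{\vvel\cdot\grad\cutoff_i}_{-1,q}\le\Cc{c}\,\tay\snorm{k}\norm{\vvel}_{-1,q;\Omega_{4R}}$, which is where the $\WSR{-1}{q}$-norm enters. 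Finally, (iv) reassemble: since $\vvel=\uvel$ on $\ball^{2R}$ and $\vvel=\wvel$ on $\Omega_{2R}$, the left-hand side of \eqref{est:RotatingOseen_ResolventEst_ErrorTerms} is dominated by a sum of the left-hand sides of the two sub-estimates plus lower-order terms on $\Omega_{4R}$ coming from the cut-off, and $\norm{\grad\vvel}_{s_2}$, $\norm{\vvel}_{s_1}$ on the bounded overlap region are absorbed into $\norm{\vvel}_{1,q;\Omega_{4R}}$ via Sobolev embedding (here $q<2$ ensures $s_1,s_2<\infty$ and the embeddings $\WSR{1}{q}(\Omega_{4R})\embeds\LR{s_1}(\Omega_{4R})$, $\WSR{1}{q}(\Omega_{4R})$ controls $\grad\vvel$ in $\LR{q}\subset\LR{s_2}$ locally are valid on the bounded set).

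The main obstacle I expect is \textbf{bookkeeping the $\rey$- and $\tay$-dependence of the constant}: the whole-space estimate (ii) comes with a constant depending only on $(q,\theta)$, but the Stokes estimate (iii) on $\Omega_{4R}$ has a constant depending on $\Omega$, $q$, $R$ only — so the rotation and Oseen lower-order terms, which carry explicit factors $\rey$ and $\tay$, must be kept on the right-hand side rather than absorbed, which is exactly why the factor $(1+\rey+\tay)$ and $\tay\snorm{k}$ appear in front of the local norms of $\vvel$ in \eqref{est:RotatingOseen_ResolventEst_ErrorTerms}. A secondary technical point is ensuring the Bogovski\u\i{} corrections are placed so that both $\uvel$ and $\wvel$ are genuinely divergence-free and have the correct (zero) boundary values — the inner function $\wvel$ on $\partial\Omega_{4R}$ and the requirement $\bogopr$ acts on functions of vanishing mean, which holds since $\int\vvel\cdot\grad\cutoff_i\,\dx=\int\Div(\cutoff_i\vvel)\,\dx=0$ by the divergence theorem (boundary terms vanish as $\cutoff_i$ or its support is compactly contained appropriately). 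Neither of these is deep, but both require care to get the stated form of the estimate exactly.
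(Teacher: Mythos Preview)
Your overall architecture is right and matches the paper's: split with $\cutoff_0,\cutoff_1$, apply the whole-space rotating Oseen resolvent estimate to the outer piece, a Stokes-type estimate on $\Omega_{4R}$ to the inner piece, and bound the commutator forcings. The outer step (i) and the reassembly (iv) are fine.

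The gap is in step (ii). You treat the inner problem as the \emph{steady} Stokes system and invoke \cite[Theorem IV.6.1]{GaldiBookNew}. With that choice the term $-\tay ik\,\cutoff_0\vvel$ lands in $\wf$, so $\norm{\wf}_q$ contains $\tay\snorm{k}\,\norm{\cutoff_0\vvel}_{q;\Omega_{4R}}$, and this is \emph{not} bounded by $\tay\snorm{k}\,\norm{\vvel}_{-1,q;\Omega_{4R}}$. Your claim in (iii) that $\norm{\wf}_q$ is dominated by the right-hand side of \eqref{est:RotatingOseen_ResolventEst_ErrorTerms} therefore fails: you would only obtain the weaker estimate with $\tay\snorm{k}\,\norm{\vvel}_{q;\Omega_{4R}}$ in place of $\tay\snorm{k}\,\norm{\vvel}_{-1,q;\Omega_{4R}}$. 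This distinction is not cosmetic: in the subsequent contradiction argument (Lemma \ref{lem:RotatingOseen_ResolventEst_Final}) the sequence $i\tay_jk_j\vvel_j$ is only bounded in $\LR{q}(\Omega_{4R})$, so one needs the compact embedding $\LR{q}\embeds\WSR{-1}{q}$ to pass to the limit; with a $\norm{\cdot}_q$-term on the right you could not conclude.

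The fix is exactly what the paper does: keep $i\tay k\,\wvel$ on the \emph{left} and treat the inner problem as the Stokes \emph{resolvent} problem $i\tay k\,\wvel-\Delta\wvel+\grad\wpres=\wf$ on $\Omega_{4R}$, using the resolvent estimate of \cite{FarwigSohr_GenResEstStokes1994} (constant independent of $\tay k$). Then the only $\tay\snorm{k}$-contribution in $\wf$ comes from $-i\tay k\,\bogopr(\vvel\cdot\grad\cutoff_0)$, for which your negative-norm Bogovski\u\i{} bound applies. Note also that the step $\norm{\vvel\cdot\grad\cutoff_0}_{-1,q}\le c\,\norm{\vvel}_{-1,q;\Omega_{4R}}$ is not automatic from multiplication by $\grad\cutoff_0$; the paper obtains it by writing $\int\vvel\cdot\grad\cutoff_0\,\psi\,\dx=\int\vvel\cdot\grad\cutoff_0\,\overline{\psi}\,\dx$ (using $\Div\vvel=0$ and $\vvel=0$ on $\partial\Omega$) and then Poincar\'e's inequality on $\overline{\psi}$.
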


\begin{proof}
Let $\cutoff_0$, $\cutoff_1$ be the ``cut-off'' functions from the proof of Lemma \ref{lem:RotatingOseen_Uniqueness_resolvent}.
Define $\wvel\in\WSR{2}{q}(\Omega)$ and $\wpres\in\WSR{1}{q}(\Omega)$ 
as in \eqref{eq:RotatingOseen_Cutoff_InnerFunction}. Then
\[
\begin{pdeq}
i k\tay\,\wvel- \Delta \wvel + \grad \wpres &= \wf
&& \tin \Omega_{4R}, \\
\Div\wvel&=0
&& \tin \Omega_{4R}, \\
\wvel&=0
&& \ton \partial\Omega_{4R},
\end{pdeq}
\]
with 
\[
\wf\coloneqq
\bp{\vf-\tay\rotterm{\vvel}-\rey\partial_1\vvel}\cutoff_0
-2\grad\cutoff_0\cdot\grad\vvel-\Delta\cutoff_0\vvel+\grad\cutoff_0\vpres
-\np{ik\tay-\Delta}\bogopr(\vvel\cdot\grad\cutoff_0).
\]
Well-known theory for the Stokes resolvent problem 
(see for example \cite{FarwigSohr_GenResEstStokes1994}) 
yields
\begin{align}\label{est:RotatingOseen_Cutoff_InnerFunction}
\begin{split}
\norm{\vvel}_{2,q;\Omega_{2R}}
&+\norm{\grad\vpres}_{q;\Omega_{2R}}
\leq
\norm{\wvel}_{2,q;\Omega_{4R}}
+\norm{\grad\wpres}_{q;\Omega_{4R}}
\leq\Cc{c}\norm{h}_{q;\Omega_{4R}}
\\
&\quad\leq\Cc{c}\bp{
\norm{\vf}_q
+\np{1+\rey+\tay}
\norm{\vvel}_{1,q;\Omega_{4R}}
+\norm{\vpres}_{q;\Omega_{4R}}
+\tay\snorm{k}\,\snorm{\vvel\cdot\grad\chi_0}_{-1,q;\Omega_{4R}}^\ast
}.
\end{split}
\end{align}
In the last estimate we used mapping properties of the Bogovski\u\i{} operator 
(see \cite[Section III.3]{GaldiBookNew}), namely
\[
\norm{\grad\bogopr h}_{m,q;\Omega_{4R}}\leq\Cc{c}\norm{h}_{m,q;\Omega_{4R}},
\qquad
\norm{\bogopr h}_{q;\Omega_{4R}}\leq\Cc{c}\snorm{h}_{-1,q;\Omega_{4R}}^\ast
\]
for $m\in\N_0$, where 
\[
\snorm{h}_{-1,q;D}^\ast
\coloneqq
\sup\setcL{\snorml{\int_{D}h\psi\,\dx}}{\psi\in\CRci(\R^3),\ \norm{\grad\psi}_{q;D}=1}.
\]
To estimate the last term in \eqref{est:RotatingOseen_Cutoff_InnerFunction}, 
we introduce the notation 
\[
\overline{\psi}\coloneqq\psi-\frac{1}{\snorml{\Omega_{4R}}}\int_{\Omega_{4R}}\psi\,\dx
\]
for $\psi\in\CRci(\R^3)$, 
and we employ that $\Div\vvel=0$ in $\Omega$ and $\vvel=0$ on $\partial\Omega$ to deduce the identity
\begin{align*}
\int_{\Omega_{4R}}\vvel\cdot\grad\cutoff_0 \psi \,\dx
&=\int_{\Omega_{4R}}\Div\np{\vvel\cutoff_0} \psi \,\dx
=-\int_{\Omega_{4R}}\cutoff_0 \vvel\cdot \grad\overline{\psi} \,\dx\\
&=\int_{\Omega_{4R}}\Div \np{\vvel\cutoff_0}\overline{\psi} \,\dx
=\int_{\Omega_{4R}}\vvel\cdot \grad\cutoff_0\overline{\psi} \,\dx.
\end{align*}
Since Poincar\'e's inequality yields
\[
\norm{\overline{\psi}{\grad\cutoff_0}}_{1,q';\Omega_{4R}}
\leq\Cc{c}\norm{\overline{\psi}}_{1,q';\Omega_{4R}}
\leq\Cc{c}\norm{\grad{\psi}}_{q';\Omega_{4R}},
\]
we have
\begin{align*}
\snorm{\vvel\cdot\grad\chi_0}_{-1,q;\Omega_{4R}}^\ast
&\leq \sup\setcl{\norm{\vvel}_{-1,q;\Omega_{4R}}\norm{\overline{\psi}{\grad\cutoff_0}}_{1,q';\Omega_{4R}}}
{\psi\in\CRci(\R^3),\ \norm{\grad\psi}_{q';\Omega_{4R}}=1}\\
&\leq\Cc{c} \norm{\vvel}_{-1,q;\Omega_{4R}}.
\end{align*}
Applying this estimate to the last term in \eqref{est:RotatingOseen_Cutoff_InnerFunction}, we obtain
\begin{align}\label{est:RotatingOseen_Cutoff_InnerFunction_BetterNorm}
\norm{\vvel}_{2,q;\Omega_{2R}}
+\norm{\grad\vpres}_{q;\Omega_{2R}}
\leq\Cc{c}\bp{
\norm{\vf}_q
+\np{1+\rey+\tay}
\norm{\vvel}_{1,q;\Omega_{4R}}
+\norm{\vpres}_{q;\Omega_{4R}}
+\tay\snorm{k}\,\norm{\vvel}_{-1,q;\Omega_{4R}}
}.
\end{align}

Next define ($\uvel,\upres)$ as in \eqref{eq:RotatingOseen_Cutoff_OuterFunction}, which
satisfies the system
\[
\begin{pdeq}
\tay\rotdertermFT{\uvel} -\Delta \uvel -\rey\partial_1\uvel + \grad \upres &= \uf
& \tin \R^3, \\
\Div\uvel&=0
& \tin \R^3,
\end{pdeq}
\]
with
\begin{align*}
\uf&\coloneqq
\cutoff_1\vf
-\tay\np{\eone\wedge x\cdot\grad\cutoff_1}\vvel
-2\grad\cutoff_1\cdot\grad\uvel-\Delta\cutoff_1\vvel
+\rey\partial_1\cutoff_1\vvel
+\grad\cutoff_1\vpres -\Delta\bogopr(\vvel\cdot\grad\cutoff_1) 
\\
&
\quad+\rey\partial_1\bogopr(\vvel\cdot\grad\cutoff_1)
+\tay\rotdertermFT{\bogopr(\vvel\cdot\grad\cutoff_1)}.
\end{align*}
Theorem \ref{thm:RotatingOseen_WholeSpace_resolvent} implies
\[
\begin{split}
\tay&\norm{\rotdertermFTsimple\vvel}_{q;\Omega^{2R}}
+\norm{\grad^2\vvel}_{q;\Omega^{2R}}
+\rey\norm{\partial_1\vvel}_{q;\Omega^{2R}}\\
&\qquad+\rey^{1/4}\norm{\grad\vvel}_{s_2;\Omega^{2R}} 
+\rey^{1/2}\norm{\vvel}_{s_1;\Omega^{2R}}
+\norm{\grad\vpres}_{q;\Omega^{2R}}
\\
&\leq\tay\norm{\rotdertermFTsimple\uvel}_{q}
+\norm{\grad^2\uvel}_{q}
+\rey\norm{\partial_1\uvel}_{q}
+\rey^{1/4}\norm{\grad\uvel}_{s_2} 
+\rey^{1/2}\norm{\uvel}_{s_1}
+\norm{\grad\upres}_{q}
\\
&\leq\Cc{c}\bp{
\norm{\vf}_q
+\np{1+\rey+\tay}\norm{\vvel}_{1,q;\Omega_{2R}}
+\norm{\vpres}_{q;\Omega_{2R}}
+\tay\snorm{k}\norm{\vvel}_{-1,q;\Omega_{2R}}
},
\end{split}
\]
where we estimated the terms containing the Bogovski\u\i{} operator as above.
Combining this estimate with \eqref{est:RotatingOseen_Cutoff_InnerFunction_BetterNorm},
we conclude \eqref{est:RotatingOseen_ResolventEst_ErrorTerms}.
\end{proof}

In the next step we improve estimate \eqref{est:RotatingOseen_ResolventEst_ErrorTerms}
by showing that the lower-order terms 
on the right-hand side can be omitted.
This leads to the desired estimate \eqref{est:RotatingOseen_resolvent}
with the asserted dependencies of the constant $\const{const:RotatingOseen_linear}$.

\begin{lem}\label{lem:RotatingOseen_ResolventEst_Final}
Let $q\in(1,2)$, $k\in\Z$ and $\rey,\,\tay>0$,
and let $\vf\in\LR{q}(\Omega)$. 
Let $\np{\vvel,\vpres}\in\LRloc{1}(\Omega)$ 
be a solution to \eqref{sys:RotatingOseen_resolvent} 
in the class \eqref{el:RotatingOseen_resolventProblem_fctClass}.
Then estimate \eqref{est:RotatingOseen_resolvent} holds
for a constant
$\const{const:RotatingOseen_linear}
=\const{const:RotatingOseen_linear}\np{\Omega,q,\rey,\tay}>0$.
If $q\in(1,\frac{3}{2})$
and
$\rey^2\leq\theta\tay\leq B$ then this constant
can be chosen independently of $\rey$ and $\tay$ such that
$\const{const:RotatingOseen_linear}
=\const{const:RotatingOseen_linear}(\Omega,q,\theta,B)$.
\end{lem}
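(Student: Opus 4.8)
The goal is to improve \eqref{est:RotatingOseen_ResolventEst_ErrorTerms} by absorbing the three local lower-order quantities $(1+\rey+\tay)\norm{\vvel}_{1,q;\Omega_{4R}}$, $\tay\snorm{k}\norm{\vvel}_{-1,q;\Omega_{4R}}$ and $\norm{\vpres}_{q;\Omega_{4R}}$ on its right-hand side, and the natural tool is a contradiction–compactness argument. (Given any $\rey,\tay>0$ one may take $\theta\coloneqq\rey^2/\tay$, so Lemma~\ref{lem:RotatingOseen_ResolventEst_ErrorTerms} is always applicable.) Assume the assertion fails for the fixed $q$. Then there exist $k_j\in\Z$, admissible parameters $\rey_j,\tay_j>0$ (equal to the prescribed values in the first claim, and satisfying $\rey_j^2\le\theta\tay_j\le B$ in the second), $\vf_j\in\LR{q}(\Omega)$, and solutions $(\vvel_j,\vpres_j)$ to \eqref{sys:RotatingOseen_resolvent} in the class \eqref{el:RotatingOseen_resolventProblem_fctClass}, such that after fixing the pressure representative by $\int_{\Omega_{4R}}\vpres_j\,\dx=0$ and normalizing the left-hand side of \eqref{est:RotatingOseen_resolvent} to equal $1$, one has $\norm{\vf_j}_q\to0$. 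By Lemma~\ref{lem:RotatingOseen_ResolventEst_ErrorTerms} the sum of the three local quantities is then bounded below by a positive constant $c_0$ along the sequence.

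Since every term of the normalized left-hand side of \eqref{est:RotatingOseen_resolvent} is $\le1$, the sequences $\grad^2\vvel_j$, $\partial_1\vvel_j$, $\grad\vpres_j$ are bounded in $\LR{q}(\Omega)$; together with the local bound furnished by Lemma~\ref{lem:RotatingOseen_ResolventEst_ErrorTerms} and Poincar\'e's inequality (using the pressure normalization) this makes $\vvel_j$ bounded in $\WSRloc{2}{q}(\overline{\Omega})$ and $\vpres_j$ bounded in $\WSRloc{1}{q}(\overline{\Omega})$. By the Rellich--Kondrachov theorem we may pass, along a subsequence, to limits $\vvel_j\to\vvel_\infty$ in $\WSRloc{1}{q}(\overline{\Omega})$ and $\vpres_j\to\vpres_\infty$ in $\LRloc{q}(\overline{\Omega})$, with weak limits of the higher-order quantities, and $(k_j,\rey_j,\tay_j)$ converging (to a finite limit whenever it is bounded).

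In the non-degenerate regime, where $k_j$ and $\rey_j,\tay_j$ converge to finite values with $\tay_\infty>0$, the pair $(\vvel_\infty,\vpres_\infty)$ solves the homogeneous problem \eqref{sys:RotatingOseen_resolvent} with the limiting parameters and lies in the class \eqref{el:RotatingOseen_resolventProblem_fctClass} (in the first claim $\vvel_\infty\in\LR{s_1}(\Omega)$ since $\rey$ is fixed and positive). Lemma~\ref{lem:RotatingOseen_Uniqueness_resolvent} then yields $\vvel_\infty=0$ and $\vpres_\infty$ constant, hence $\vpres_\infty=0$ by the mean-value normalization; consequently $\norm{\vvel_j}_{1,q;\Omega_{4R}}\to0$, $\norm{\vpres_j}_{q;\Omega_{4R}}\to0$ and $\tay_j\snorm{k_j}\norm{\vvel_j}_{-1,q;\Omega_{4R}}\to0$, contradicting $c_0>0$. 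This proves \eqref{est:RotatingOseen_resolvent} with $\const{const:RotatingOseen_linear}=\const{const:RotatingOseen_linear}(\Omega,q,\rey,\tay)$. For $q\in(1,\tfrac32)$ the same scheme run over the entire admissible parameter set $\{\rey^2\le\theta\tay\le B,\ k\in\Z\}$ yields a constant depending only on $\Omega,q,\theta,B$, because the constant of Lemma~\ref{lem:RotatingOseen_ResolventEst_ErrorTerms} is already uniform there and the exponents $s_1,s_2$ remain subcritical, so all interpolation and embedding constants entering the argument are uniform.

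The main obstacle is the treatment of the degenerate parameter limits. When $\tay_j\to0$ (hence $\rey_j\to0$) the \textit{a priori} control of $\vvel_j$ in $\LR{s_1}(\Omega)$ and of $\rotdertermFTsimple\vvel_j$ is lost, and the limiting equation degenerates to a homogeneous (resolvent-type) Stokes system, whose classical Liouville property must be invoked to conclude $\vvel_\infty=0$. When $\tay_j\snorm{k_j}\to\infty$ the limiting equation is not of resolvent type; here one exploits that $\tay_j\snorm{k_j}\norm{\vvel_j}_{q;\Omega_{4R}}\le 1+C_R\tay_j\norm{\vvel_j}_{1,q;\Omega_{4R}}$, which forces $\vvel_j\to0$ locally, and then rescales $x\mapsto y=x/\sqrt{\tay_j\snorm{k_j}}$ to reduce the leading operator to a model unit-frequency resolvent on a shrinking obstacle, so that the whole-space estimate of Theorem~\ref{thm:RotatingOseen_WholeSpace_resolvent} applies with the obstacle's contribution treated as a controllable perturbation. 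Carrying out these borderline cases so that all constants retain the asserted dependencies is the technical heart of the proof; everything else reduces to the cut-off/Bogovski\u\i{} bookkeeping already performed in Lemma~\ref{lem:RotatingOseen_ResolventEst_ErrorTerms}, Rellich compactness, and the Liouville results cited above.
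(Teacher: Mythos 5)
Your overall strategy — contradiction plus local compactness, with normalization of the solution and of the pressure mean, and with Lemma~\ref{lem:RotatingOseen_ResolventEst_ErrorTerms} providing a positive lower bound on the local quantities along the contradicting sequence — is indeed the one used in the paper, and your treatment of the non-degenerate case (finite limit of $\tay_jk_j$, positive $\tay_\infty$, $\rey_\infty$) via Lemma~\ref{lem:RotatingOseen_Uniqueness_resolvent} is essentially correct. However, there is a genuine gap in how you handle the degenerate limits, and the gap traces back to a missing structural idea.

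The paper tracks the \emph{triple} $\Uvel_j\coloneqq(i\tay_jk_j\vvel_j,\vvel_j,\vpres_j)$ and passes to a limit $(\wvel,\vvel,\vpres)$ in which $\wvel$ is an \emph{independent} unknown (not a priori equal to $i\tay k\vvel$); the limiting system is then
\begin{equation*}
\wvel+\tay\rotterm{\vvel}-\Delta\vvel-\rey\partial_1\vvel+\grad\vpres=0,
\end{equation*}
with $\Div\vvel=0$ and $\restriction{\vvel}{\partial\Omega}=0$, and the contradiction comes from showing $\wvel=\vvel=\grad\vpres=0$ (together with the pressure normalization). You never introduce this auxiliary variable, only speak of ``weak limits of higher-order quantities''; without it, your argument for the case $\tay_j\snorm{k_j}\to\infty$ does not close. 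Indeed, in that case you correctly observe that $\vvel_j\to0$ locally (hence $\vvel=0$), but that alone does not contradict the lower bound: Lemma~\ref{lem:RotatingOseen_ResolventEst_ErrorTerms} also carries the term $\tay_j\snorm{k_j}\norm{\vvel_j}_{-1,q;\Omega_{4R}}$, which converges to $\norm{\wvel}_{-1,q;\Omega_{4R}}$, so one must additionally prove $\wvel=0$. The paper does this cleanly: once $\vvel=0$, the limiting equation degenerates to $\wvel+\grad\vpres=0$ with $\Div\wvel=0$ and $\restriction{\wvel}{\partial\Omega}=0$, which is exactly a Helmholtz decomposition of $0$ in $\LR{q}(\Omega)$, hence $\wvel=\grad\vpres=0$ by uniqueness of the decomposition. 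Your proposed rescaling $x\mapsto x/\sqrt{\tay_j\snorm{k_j}}$ to a ``unit-frequency resolvent on a shrinking obstacle'' is both unnecessary and not obviously workable (the boundary contribution is not a small perturbation of the whole-space problem in any standard sense, and this step is exactly where your argument is left vague).

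Two further points. First, the correct dichotomy is based on the behavior of the product $\tay_jk_j$, not of $k_j$ alone: it is the quantity $i\tay_jk_j\vvel_j$ that enters the equation, and it is its limit $\wvel$ that matters. Second, in the degenerate case $\tay_j\to 0$ (hence $\rey_j\to0$) you should note that Lemma~\ref{lem:RotatingOseen_Uniqueness_resolvent} is no longer directly applicable, and the needed integrability of $\vvel$ must be recovered from $\grad^2\vvel\in\LR{q}$ via Sobolev embedding; this is where the restriction $q<\tfrac32$ is used (the paper splits further into the sub-cases $s\neq0$, a Stokes resolvent problem, and $s=0$, a steady Stokes problem, each with its own Liouville-type argument). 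As written, your proposal identifies the right scaffolding but defers the essential degenerate cases to a sketch that would not go through without the auxiliary variable $\wvel$ and the Helmholtz argument.
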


\begin{proof}
We employ a contradiction argument. 
At first, consider the case $q\in(1,\frac{3}{2})$ and
assume that \eqref{est:RotatingOseen_resolvent} is not valid
for a constant 
$\const{const:RotatingOseen_linear}=\const{const:RotatingOseen_linear}\np{\Omega,q,\theta,B}$.
Then there exist sequences of numbers 
$(\rey_j)\subset(0,\sqrt{B}]$, 
$(\tay_j)\subset(0,B/\theta]$ 
with $\rey_j^2\leq\theta\tay_j$,
and 
$(k_j)\subset\Z$,
and of functions $(\vvel_j)$, $(\vpres_j)$, $(\vf_j)$
that satisfy
\begin{align}\label{eq:RotatingOseen_ResolventEst_Final:SeqNorm}
\begin{split}
\tay_j&\norm{\rotdertermFTsimpleseq{\vvel_j}}_{q}
+\norm{\grad^2\vvel_j}_{q}\\
&\qquad+\rey_j\norm{\partial_1\vvel_j}_{q} 
+\rey_j^{1/2}\norm{\vvel_j}_{s_1}
+\rey_j^{1/4}\norm{\grad\vvel_j}_{s_2} 
+\norm{\grad\vpres_j}_{q}
=1, 
\end{split}
\end{align}
$\norm{\vf_j}_{q}\to 0$ as $j\to\infty$, and
\begin{align}\label{sys:RotatingOseen_ResolventEst_Final:Seq}
\begin{pdeq}
\tay_j\rotdertermFTseq{\vvel_j}
- \Delta \vvel_j 
- \rey_j \partial_1 \vvel_j 
+ \grad \vpres_j 
&= \vf_j && \tin\Omega, \\
\Div\vvel_j&=0 && \tin\Omega,\\
\vvel_j&=0 && \ton \partial\Omega,
\end{pdeq}
\end{align}
for all $j\in\N$.
Furthermore, without loss of generality, 
we may assume $\int_{\Omega_R}\vpres_j\,\dx=0$ 
for $R>0$ as in Lemma \ref{lem:RotatingOseen_ResolventEst_ErrorTerms}.
Then, $(\rey_j)$, $(\tay_j)$ and $(k_j)$ contain (improper) convergent subsequences
with limits $\rey\in\nb{0,\sqrt{B}}$,
$\tay\in\nb{0,B/\theta}$
and $k\in\Z\cup\set{\pm\infty}$, 
respectively, and we have $\rey^2\leq\theta\tay$.
For simplicity, we identify selected subsequences with the actual sequences.
Moreover, \eqref{eq:RotatingOseen_ResolventEst_Final:SeqNorm} 
implies that 
$\Uvel_j\coloneqq(i\tay_j k_j\vvel_j,\vvel_j,\vpres_j)$ is bounded in 
$\LR{q}(\Omega_\rho)\times\WSR{2}{q}(\Omega_\rho)\times\WSR{1}{q}(\Omega_\rho)$
for any $\rho>R$.
Hence, by a Cantor diagonalization argument, there exists a subsequence 
that converges weakly in $\LR{q}(\Omega_\rho)\times\WSR{2}{q}(\Omega_\rho)\times\WSR{1}{q}(\Omega_\rho)$ to some $\Uvel\coloneqq(\wvel,\vvel,\vpres)$
for each $\rho>R$.
Consequently, passing to the limit $j\to\infty$ in
\eqref{sys:RotatingOseen_ResolventEst_Final:Seq},
we obtain
\begin{align}\label{sys:RotatingOseen_ResolventEst_Final:Limit}
\begin{pdeq}
\wvel
+\tay\rotterm{\vvel}
- \Delta \vvel 
- \rey \partial_1 \vvel 
+ \grad \upres 
&= 0 && \tin\Omega, \\
\Div\vvel&=0 && \tin\Omega,\\
\vvel&=0 && \ton \partial\Omega.
\end{pdeq}
\end{align}
Moreover, by the compact embeddings 
\[
\WSR{2}{q}(\Omega_{4R})\embeds
\WSR{1}{q}(\Omega_{4R})\embeds
\LR{q}(\Omega_{4R})\embeds
\WSR{-1}{q}(\Omega_{4R}),
\]
we deduce that $\Uvel$ is the strong limit of 
$(\Uvel_j)$ 
in the topology of $\WSR{-1}{q}(\Omega_{4R})\times\WSR{1}{q}(\Omega_{4R})\times\LR{q}(\Omega_{4R})$.
By Lemma \ref{lem:RotatingOseen_ResolventEst_ErrorTerms}, 
\begin{align*}
\begin{split}
&\tay_j\norm{ \rotdertermFTsimpleseq{\vvel_j}}_{q}
+\norm{\grad^2\vvel_j}_{q}\\
&\qquad +\rey_j\norm{\partial_1\vvel_j}_{q}
+\rey_j^{1/2}\norm{\vvel_j}_{s_1}
+\rey_j^{1/4}\norm{\grad\vvel_j}_{s_2} 
+\norm{\grad\vpres_j}_{q}\\
&\qquad \qquad\leq\const{const:RotatingOseen_linearEst_ErrorTerms}
\bp{\norm{\vf_j}_{q}+\np{1+\rey_j+\tay_j}\norm{\vvel_j}_{1,q;\Omega_{4R}}
+\tay\snorm{k_j}\,\norm{\vvel_j}_{-1,q;\Omega_{4R}}
+\norm{\vpres_j}_{q;\Omega_{4R}}}.
\end{split}
\end{align*}
Passing to the limit $j\to\infty$ in this estimate,
we conclude in virtue of \eqref{eq:RotatingOseen_ResolventEst_Final:SeqNorm} that
\begin{align}\label{eq:RotatingOseen_ResolventEst_Final:Limit}
1
\leq\const{const:RotatingOseen_linearEst_ErrorTerms}
\bp{\np{1+\rey+\tay}\norm{\vvel}_{1,q;\Omega_{4R}}
+\norm{\wvel}_{-1,q;\Omega_{4R}}
+\norm{\vpres}_{q;\Omega_{4R}}}.
\end{align}
Moreover,
\begin{align}\label{Mads1}
\begin{split}
\norm{\wvel + \tay\rotterm\vvel}_{q}
&+\norm{\grad^2\vvel}_{q}
+\rey\norm{\partial_1\vvel}_{q}
+\rey^{1/2}\norm{\vvel}_{s_1}
+\rey^{1/4}\norm{\grad\vvel}_{s_2} 
+\norm{\grad\vpres}_{q}< \infty.
\end{split}
\end{align}
Now we distinguish between several cases:
\begin{enumerate}[i.]
\item
If $\tay_j k_j\to s\in\R$ and $\tay=0$, then $\rey=0$ and $\wvel=is\vvel$, so that
\eqref{sys:RotatingOseen_ResolventEst_Final:Limit}
reduces to a Stokes resolvent problem.
If $s\neq 0$, we also have $\vvel\in\LR{q}(\Omega)$ 
and we conclude $\vvel=\grad\vpres=0$ from a well-known uniqueness result;
see for example \cite{FarwigSohr_GenResEstStokes1994}.
If $s=0$, we utilize that $q<\frac32$ and $\vvel_j\in\LR{s_1}(\Omega)$, $\grad\vvel_j\in\LR{s_2}(\Omega)$, 
so that Sobolev's inequality implies 
\[
\norm{\vvel_j}_{3q/(3-2q)}\leq\Cc{c}\norm{\grad\vvel_j}_{3q/(3-q)}\leq\Cc{c}\norm{\grad^2\vvel_j}_{q},
\]
and thus $\vvel\in\LR{3q/(3-2q)}(\Omega)$.
Now $\vvel=\grad\vpres=0$ follows from classical uniqueness properties of the steady-state Stokes problem, 
see for example \cite[Theorem V.4.6]{GaldiBookNew}.
\item
If $\tay_j k_j\to s\in\R$ and $\tay\neq 0$ but $\rey=0$, 
then $k_j\to k\in\Z$ and $\wvel=i\tay k\vvel$, so that
\eqref{sys:RotatingOseen_ResolventEst_Final:Limit}
reduces to \eqref{sys:RotatingOseen_resolvent} with $\rey=0$.
As above, we deduce $\vvel\in\LR{3q/(3-2q)}(\Omega)$.  
From Lemma \ref{lem:RotatingOseen_Uniqueness_resolvent} we conclude
$\vvel=\grad\vpres=0$.
\item
If $\tay_j k_j\to s\in\R$ and $\tay\neq 0$ and $\rey\neq 0$, 
then $k_j\to k\in\Z$ and $\wvel=i\tay k\vvel$, 
so that
$\np{\vvel,\vpres}$
satisfies \eqref{sys:RotatingOseen_resolvent}.
Since $\rey\neq 0$, it follows from \eqref{Mads1} that $\vvel\in\LR{s_1}(\Omega)$.
Lemma \ref{lem:RotatingOseen_Uniqueness_resolvent} thus
implies $\vvel=\grad\vpres=0$.
\item
If $\tay_j \snorm{k_j}\to\infty$, 
we recall \eqref{eq:RotatingOseen_ResolventEst_Final:SeqNorm} and estimate
\[
\tay_j\snorm{k_j}\norm{\vvel_j}_{q;\Omega_\rho}
\leq
\tay_j\norm{\rotdertermFTsimpleseq{\vvel_j}}_{q;\Omega_\rho}
+\Cc{c}(\rho)\norm{\vvel_j}_{1,q;\Omega_\rho}
\leq \Cc{c}(\rho)
\]
for any $\rho>R$.
Passing to the limit $j\to\infty$, we thus obtain $\vvel=0$ on $\Omega_\rho$
for each $\rho>R$, whence $\vvel= 0$ on $\Omega$.
Hence, \eqrefsub{sys:RotatingOseen_ResolventEst_Final:Limit}{1} reduces to $\wvel+\grad\vpres=0$.
Clearly, we also have $\Div\wvel=0$ and $\restriction{\wvel}{\partial\Omega}=0$,
so that $\wvel+\grad\vpres=0$ corresponds to the Helmholtz decomposition of $0$ in $\LR{q}(\Omega)$.
Since this decomposition is unique, we conclude $\wvel=\grad\vpres=0$.
\end{enumerate}
Consequently, all four cases lead to $\wvel=\vvel=\grad\vpres=0$,
which contradicts  
\eqref{eq:RotatingOseen_ResolventEst_Final:Limit}.
This completes the proof in the case $1<q<\frac{3}{2}$.

In the more general case $q\in(1,2)$,
where we do not assert the constant $\const{const:RotatingOseen_linear}$ to be independent of $\rey$ and $\tay$,
these parameters remain fixed in the contradiction argument above.
Consequently, only the last two cases above have to be considered. The conclusion in both of these cases is valid for all $q\in(1,2)$, and we thus conclude the lemma. 
\end{proof}

\subsection{Existence}
To complete the proof of Theorem \ref{thm:RotatingOseen_resolvent}, 
it remains to show existence of a solution. For this purpose,
recall the following property of the Stokes operator. 

\begin{lem}\label{lem:EstSecondDerivativesStokesOp}
Let $D\subset\R^3$ be a bounded domain with $\CR{3}$-boundary. 
Every $\uvel\in\LRsigma{2}(D)\cap\WSRN{1}{2}(D)\cap\WSR{2}{2}(D)$ 
satisfies
\[
\norm{\grad^2\uvel}_2
\leq \Cc[const:EstSecondDerivativesStokesOp]{C}
\bp{\norm{\projh\Delta\uvel}_2+\norm{\grad\uvel}_2}
\]
for a constant 
$\const{const:EstSecondDerivativesStokesOp}=\const{const:EstSecondDerivativesStokesOp}(D)>0$ 
that does not depend on the ``size'' of $D$ 
but solely on its ``regularity''.
In particular, if $D=\Omega_R$ for an exterior domain $\Omega$ with $\partial\Omega\subset\ball_R$, 
the constant $\const{const:EstSecondDerivativesStokesOp}$ is independent of $R$
and solely depends on $\Omega$.
\end{lem}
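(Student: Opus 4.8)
The plan is to reduce the lemma to the classical $\LR{2}$-regularity theory for the stationary Stokes system (\cite{GaldiBookNew}) and then to establish the scale-uniformity of the constant. First I would put $g\coloneqq\projh\Delta\uvel$ and let $\wpres$ be the (unique up to a constant) pressure with $\grad\wpres=(\id-\projh)\Delta\uvel$, so that $(\uvel,\wpres)$ solves $-\Delta\uvel+\grad\wpres=-g$, $\Div\uvel=0$ in $D$, $\uvel=0$ on $\partial D$. For a \emph{fixed} bounded $\CR{2}$-domain the Stokes estimate yields $\norm{\grad^2\uvel}_2+\norm{\grad\wpres}_2\le c(D)\norm{g}_2$, so the content of the lemma is that $c(D)$ may be taken to depend only on a uniform-$\CR{3}$ description of $\partial D$ — a finite atlas of charts whose defining maps have $\CR{3}$-norms that are $O(1)$ once each chart is rescaled to unit diameter — and that $D=\Omega_R$ (with $R\ge2\rho_0$, $\partial\Omega\subset\ball_{\rho_0}$) admits such a description with bounds depending only on $\Omega$: near $\partial\Omega$ one uses finitely many charts of unit size, and near $\partial\ball_R$ finitely many charts of size $\sim R$, because on a patch of radius $\sim R$ the sphere $\partial\ball_R$ is the graph of a function whose $\CR{3}$-norm is $O(1)$ after dilation by $R^{-1}$ (its second fundamental form having norm $O(1/R)$).

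To exploit this I would localise with a partition of unity $\sum_j\cutoff_j^2\equiv1$ subordinate to the atlas above, supplemented by a dyadic family of interior cut-offs on the shells $\set{2^j\rho_0<\snorm{x}<2^{j+2}\rho_0}$, all with bounded overlap. On each boundary patch I flatten $\partial D$ by the associated $\CR{3}$-diffeomorphism; the Stokes system turns into a variable-coefficient perturbation of the half-space Stokes system whose coefficient norms are, after rescaling to unit size, controlled by the uniform-$\CR{3}$ character, and I apply the half-space $\WSR{2}{2}$-estimate (which is scale-invariant) — directly for the unit-size patches and after dilation for the $R$-size ones. On each interior shell I first perform a Bogovski\u\i{} correction $\cutoff_j\uvel-\bogopr(\grad\cutoff_j\cdot\uvel)$ to restore solenoidality, rescale the shell to unit size, and invoke the interior Stokes / Calder\'on--Zygmund estimate. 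Summing the local estimates over the (boundedly overlapping) patches produces $\norm{\grad^2\uvel}_2\le c\bp{\norm{\projh\Delta\uvel}_2+\sum_j(\text{commutator errors})}$ with $c$ depending only on $\Omega$.

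It then remains to absorb the commutator errors — combinations of $\grad\cutoff_j\cdot\grad\uvel$, $(\Delta\cutoff_j)\uvel$, $(\grad\cutoff_j)\wpres$, and $\bogopr$-terms, all supported in transition zones. Near $\partial\Omega$ the zones have unit size and $\uvel$ vanishes on $\partial\Omega$, so Poincar\'e's inequality gives $\norm{\uvel}_2\lesssim\norm{\grad\uvel}_2$ there; near $\partial\ball_R$ the zone has size $\sim R$ with $\norm{\grad\cutoff_j}_\infty\sim 1/R$ and $\norm{\Delta\cutoff_j}_\infty\sim 1/R^2$, and since $\uvel=0$ on $\partial\ball_R$ the Poincar\'e constant $\sim R$ again delivers an $O(1)\norm{\grad\uvel}_2$ bound; on the interior shells one combines Poincar\'e--Wirtinger with a telescoping estimate for the shell means $\overline{\uvel}_j$ (bounded by $\norm{\grad\uvel}_2$ thanks to the geometric decay of the radii) and the scale-invariant Ne\v{c}as inequality to control $\norm{\wpres-\overline{\wpres}_j}_2$, the Bogovski\u\i{} operators acting on rescaled copies of two fixed reference geometries and hence carrying $R$-independent constants. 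I expect the main obstacle to be precisely this bookkeeping: the pressure contributions $(\grad\cutoff_j)\wpres$ on the far shells must be estimated without re-introducing $\norm{\grad^2\uvel}_2$ on the right with a non-absorbable constant, and all interior lower-order terms must be expressed through $\norm{\grad\uvel}_2$ rather than $\norm{\uvel}_2$ — the latter being unavoidable since $\Omega_R$ has Poincar\'e constant $\sim R$, which is exactly why the lemma is phrased with $\norm{\grad\uvel}_2$. As an alternative that bypasses the interior, one can start from the Rellich--Ne\v{c}as identity $\norm{\grad^2\uvel}_2^2=\norm{\Delta\uvel}_2^2+\int_{\partial D}Q(\partial_\nu\uvel)\,\dS$, whose boundary integrand is a quadratic form in $\partial_\nu\uvel|_{\partial D}$ with coefficients bounded by $\norm{\mathrm{II}_{\partial D}}_\infty=O(1)$ for $\Omega_R$, and combine it with a trace inequality on a collar of $R$-adapted width and a matching identity for the harmonic pressure; here the delicate points are the cancellation of the second-order boundary terms (using $\Div\uvel=0$ and $\uvel|_{\partial D}=0$) and, once more, the scale-uniform pressure estimate.
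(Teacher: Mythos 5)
The paper does not give a proof of its own; it simply cites \cite[Lemma 1]{Heywood1980}. Your proposal is, in effect, a reconstruction of the kind of argument that underlies Heywood's result, and the ideas are the right ones: a covering of $\partial D$ by charts whose rescaled $\CR{3}$-norms are controlled independently of $\diam D$, half-space Stokes estimates on the boundary patches, dyadic interior shells with Bogovski\u\i{} corrections to restore solenoidality after cutting off, and absorption of commutator errors through Poincar\'e and Ne\v{c}as inequalities made scale-invariant by rescaling. You also single out the genuine delicacy --- the pressure terms $(\grad\cutoff_j)\wpres$ must be bounded through the $\WSR{-1}{2}$-norm of $\grad\wpres$, hence by $\norm{\grad\uvel}_2+\norm{\projh\Delta\uvel}_2$ via the equation, rather than by $\norm{\grad^2\uvel}_2$, which would be non-absorbable --- and you correctly observe that this is why the lemma is stated with $\norm{\grad\uvel}_2$ and not $\norm{\uvel}_2$ on the right-hand side. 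That chain of remarks is precisely what makes the constant $R$-independent when $D=\Omega_R$.

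Two caveats. First, the Rellich--Ne\v{c}as alternative you sketch at the end does not actually bypass the pressure difficulty: by the Helmholtz orthogonality one has $\norm{\Delta\uvel}_2^2=\norm{\projh\Delta\uvel}_2^2+\norm{\grad\wpres}_2^2$, and passing from $\norm{\grad\wpres}_2$ to $\norm{\grad\uvel}_2$ again requires a scale-uniform pressure estimate, so it is no shortcut around the obstacle you yourself identify in the localisation route. Second, your text is a plan rather than a proof --- the scale-invariance of the local half-space estimates under the flattening diffeomorphisms, the bounded-overlap property of the covering, and the $\ell^2$-summability of the shell contributions all need to be verified in detail. But the route is sound, matches the spirit of the reference, and is precisely the work the authors chose to outsource to \cite{Heywood1980} rather than reproduce.
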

\begin{proof}
See \cite[Lemma 1]{Heywood1980}.
\end{proof}

We further need the following identity from \cite{GaldiSilvestre_StrongSolNSObstacle_2005}.

\begin{lem}\label{lem:PropertiesRotTerm}
Let $\uvel\in\LRsigma{2}(\Omega_R)\cap\WSRN{1}{2}(\Omega_R)\cap\WSR{2}{2}(\Omega_R)$
with complex conjugate $\uvel^\ast$. 
Then $\rottermsimple{\uvel}\in\LRsigma{2}(\Omega_R)$ and
\begin{align*}
\int_{\Omega_R}&\rotterm{\uvel}\cdot
\projh\Delta\uvel^\ast\,\dx
\\
&=\int_{\partial\Omega} 
\frac{1}{2}\snorm{\grad\uvel}^2 \np{\eone\wedge x}\cdot \nvec
-\nvec \cdot\grad\uvel^\ast\cdot
\np{\eone\wedge x \cdot\grad\uvel}\,\dS
- \int_{\Omega_R} \grad\np{\eone\wedge\uvel}:\grad\uvel^\ast\,\dx.
\end{align*}
\end{lem}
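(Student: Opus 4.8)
The plan is to reproduce, now in the complex‑valued setting, the integration‑by‑parts computation behind the real‑valued version of this identity in \cite{GaldiSilvestre_StrongSolNSObstacle_2005}. First I would check that $\rottermsimple{\uvel}\in\LRsigma{2}(\Omega_R)$. Since $\eone$ is constant, $\Div\np{\eone\wedge\uvel}=-\eone\cdot\curl\uvel$ and $\Div\bp{\np{\eone\wedge x}\cdot\grad\uvel}=-\eone\cdot\curl\uvel+\np{\eone\wedge x}\cdot\grad\np{\Div\uvel}$, so $\Div\uvel=0$ forces $\Div\rottermsimple{\uvel}=0$. For the normal trace on $\partial\Omega_R=\partial\Omega\cup\partial\ball_R$: as $\uvel\in\WSRN{1}{2}(\Omega_R)$ vanishes on \emph{all} of $\partial\Omega_R$, there $\grad\uvel=\np{\partial_\nvec\uvel}\otimes\nvec$ with $\partial_\nvec\uvel\cdot\nvec=\Div\uvel=0$, whence $\np{\eone\wedge\uvel}\cdot\nvec=0$ and $\bb{\np{\eone\wedge x}\cdot\grad\uvel}\cdot\nvec=\bb{\np{\eone\wedge x}\cdot\nvec}\np{\partial_\nvec\uvel\cdot\nvec}=0$. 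Hence $\rottermsimple{\uvel}$ is solenoidal with vanishing normal trace, which places it in $\LRsigma{2}(\Omega_R)$. Writing the Helmholtz decomposition $\Delta\uvel^\ast=\projh\Delta\uvel^\ast+\grad\pi$ and using $\int_{\Omega_R}\rottermsimple{\uvel}\cdot\grad\pi\,\dx=0$, the projection drops out: $\int_{\Omega_R}\rottermsimple{\uvel}\cdot\projh\Delta\uvel^\ast\,\dx=\int_{\Omega_R}\rottermsimple{\uvel}\cdot\Delta\uvel^\ast\,\dx$.

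Next I would split $\rottermsimple{\uvel}=\eone\wedge\uvel-h$ with $h\coloneqq\np{\eone\wedge x}\cdot\grad\uvel$ and integrate each piece by parts against $\Delta\uvel^\ast$. Because $\uvel$ vanishes on $\partial\Omega_R$, the first piece has no boundary term and gives $\int_{\Omega_R}\np{\eone\wedge\uvel}\cdot\Delta\uvel^\ast\,\dx=-\int_{\Omega_R}\grad\np{\eone\wedge\uvel}:\grad\uvel^\ast\,\dx$, which is exactly the last term in the statement. For the second piece, $\int_{\Omega_R}h\cdot\Delta\uvel^\ast\,\dx=\int_{\partial\Omega_R}h\cdot\np{\nvec\cdot\grad\uvel^\ast}\,\dS-\int_{\Omega_R}\grad h:\grad\uvel^\ast\,\dx$; on $\partial\ball_R$ the surface integrand vanishes, because $h=\bb{\np{\eone\wedge x}\cdot\nvec}\,\partial_\nvec\uvel$ does there ($\grad\uvel=\np{\partial_\nvec\uvel}\otimes\nvec$ and $\eone\wedge x\perp\nvec$ on the sphere), so only the integral over $\partial\Omega$ survives; this is, up to sign, the second boundary term in the statement.

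It then remains to show $\int_{\Omega_R}\grad h:\grad\uvel^\ast\,\dx=\half\int_{\partial\Omega}\snorm{\grad\uvel}^2\,\np{\eone\wedge x}\cdot\nvec\,\dS$. With $\mathrm R$ the skew‑symmetric matrix determined by $\mathrm Ry=\eone\wedge y$, one has $\partial_l\np{\eone\wedge x}_j=\mathrm R_{jl}$, hence $\partial_l h_k=\mathrm R_{jl}\,\partial_j\uvel_k+\np{\eone\wedge x}_j\,\partial_j\partial_l\uvel_k$ (summed over $j$). Contracting with $\overline{\partial_l\uvel_k}$ and summing over $k$ and $l$: the $\mathrm R$‑contribution equals $\sum_{j,l}\mathrm R_{jl}\sum_k\int_{\Omega_R}\partial_j\uvel_k\,\overline{\partial_l\uvel_k}\,\dx$, and moving $\partial_j$ onto the conjugate factor (legitimate since $\uvel|_{\partial\Omega_R}=0$) shows $\sum_k\int_{\Omega_R}\partial_j\uvel_k\,\overline{\partial_l\uvel_k}\,\dx=-\sum_k\int_{\Omega_R}\uvel_k\,\overline{\partial_j\partial_l\uvel_k}\,\dx$ is symmetric in $j$ and $l$, so it is annihilated by contraction with the skew matrix $\mathrm R$. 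The remaining contribution is $\sum_{j,k,l}\int_{\Omega_R}\np{\eone\wedge x}_j\,\partial_j\partial_l\uvel_k\,\overline{\partial_l\uvel_k}\,\dx=\half\int_{\Omega_R}\np{\eone\wedge x}\cdot\grad\snorm{\grad\uvel}^2\,\dx$, and since $\Div\np{\eone\wedge x}=0$ and $\np{\eone\wedge x}\cdot\nvec=0$ on $\partial\ball_R$, one final integration by parts turns it into $\half\int_{\partial\Omega}\snorm{\grad\uvel}^2\,\np{\eone\wedge x}\cdot\nvec\,\dS$. Assembling the three contributions yields the asserted identity.

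I expect the only real difficulty to be technical rather than conceptual. The coefficient $\eone\wedge x$ of the transport‑type term is unbounded, so each integration by parts needs justification; the clean route is to carry out the computation first for smooth solenoidal $\uvel$ vanishing on $\partial\Omega_R$ — on which $\eone\wedge x$ is a bounded Lipschitz coefficient, $\Omega_R$ being bounded — and then pass to the limit using density and $\uvel\in\WSR{2}{2}(\Omega_R)$. One must also be careful about which factor carries the complex conjugate in each bilinear term, and must exploit at several points that $\uvel$ vanishes on the \emph{entire} boundary $\partial\Omega_R$ and not merely on $\partial\Omega$: this, together with $\eone\wedge x\perp\nvec$ on $\partial\ball_R$, is exactly what makes the auxiliary sphere $\partial\ball_R$ disappear from the final identity.
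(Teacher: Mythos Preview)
Your proposal is correct and is precisely what the paper does: the paper gives no argument of its own and simply cites \cite[Lemma~3]{GaldiSilvestre_StrongSolNSObstacle_2005}, so reproducing that integration-by-parts computation is exactly the intended route. Your handling of $\LRsigma{2}$-membership, the removal of $\projh$, the two integrations by parts, and the vanishing of all $\partial\ball_R$ contributions is accurate. One small point to watch in the complex-valued extension: the identification
\[
\sum_{j,k,l}\int_{\Omega_R}(\eone\wedge x)_j\,\partial_j\partial_l\uvel_k\,\overline{\partial_l\uvel_k}\,\dx
=\half\int_{\Omega_R}(\eone\wedge x)\cdot\grad\snorm{\grad\uvel}^2\,\dx
\]
is literally only the real part of the left-hand side when $\uvel$ is complex; your own caveat about keeping track of which factor carries the conjugate applies here, and it is harmless in the sequel since the paper only ever uses the real part of the identity (see the proof of Lemma~\ref{lem:RotatingOseen_Existence}).
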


\begin{proof}
See \cite[Lemma 3]{GaldiSilvestre_StrongSolNSObstacle_2005}.
\end{proof}

Existence of a solution to the resolvent problem
\eqref{sys:RotatingOseen_resolvent}
can be shown via a Galerkin approach combined with
an ``invading domains'' technique.

\begin{lem}\label{lem:RotatingOseen_Existence}
Let $\Omega\subset\R^3$ be an exterior domain of class $\CR{3}$.
Let $\rey,\,\tay>0$, $k\in\Z$,
and let $\vf\in\CRci(\Omega)$. 
Then there exists a solution 
$\np{\vvel,\vpres}$ to \eqref{sys:RotatingOseen_resolvent}
with 
\[
\rotdertermFTsimple\vvel,\,
\grad^2\vvel,\,
\partial_1\vvel,\,
\grad\vpres\in\LR{q}(\Omega),\quad
\vvel\in\LR{2q/(2-q)}(\Omega),\quad
\grad\vvel\in\LR{4q/(4-q)}(\Omega)
\]
for all $q\in(1,2)$.
\end{lem}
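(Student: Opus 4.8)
The plan is to use the classical \emph{invading domains} technique: first solve an approximate problem on the bounded truncations $\Omega_\rho$ for large $\rho$ by a Galerkin scheme, then derive \emph{a priori} bounds in an $\LR{2}$-based setting that are \emph{uniform in $\rho$}, pass to the limit $\rho\to\infty$, and finally bootstrap the resulting solution into the asserted $\LR{q}$ class by a cut-off argument. Fix $R>0$ with $\partial\Omega\subset\ball_{R}$ and $\supp\vf\subset\ball_{R}$. For $\rho>4R$ the boundary $\partial\Omega_\rho=\partial\Omega\cup\partial\ball_\rho$ is of class $\CR{3}$, and the coefficient $\eone\wedge x$ is bounded on $\Omega_\rho$, so \eqref{sys:RotatingOseen_resolvent} is a standard problem there. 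Let $\set{\psi_j}$ be a basis of $\LRsigma{2}(\Omega_\rho)$ consisting of Stokes eigenfunctions, which moreover lie in $\WSRN{1}{2}(\Omega_\rho)\cap\WSR{2}{2}(\Omega_\rho)$ and satisfy $\projh\Delta\psi_j=-\mu_j\psi_j$. Testing \eqref{sys:RotatingOseen_resolvent} with $\psi_1,\dots,\psi_m$ (the pressure drops out, the $\psi_j$ being solenoidal) yields a linear system for the coefficients of $\vvel^m_\rho\coloneqq\sum_{j=1}^m\xi_j\psi_j$; its solvability follows by testing with $\vvel^m_\rho$ itself and taking real parts, since $\Div(\eone\wedge x)=0$, the skew-symmetry of $a\mapsto\eone\wedge a$, and the vanishing trace on $\partial\Omega_\rho$ make the contributions of $\rotdertermFT{\vvel^m_\rho}$ and of $\rey\partial_1\vvel^m_\rho$ purely imaginary, so that $\norm{\grad\vvel^m_\rho}_2^2=\realpart\int_{\Omega_\rho}\vf\cdot\overline{\vvel^m_\rho}$ and the homogeneous system has only the trivial solution.

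Next I would establish two \emph{a priori} estimates, uniform in $\rho$ and $m$. Testing with $\vvel^m_\rho$ and using the Sobolev embedding $\WSRN{1}{2}(\Omega_\rho)\embeds\LR{6}(\Omega_\rho)$, whose constant does not depend on $\rho$, gives $\norm{\grad\vvel^m_\rho}_2\leq c\,\norm{\vf}_{6/5}$ and hence also $\norm{\vvel^m_\rho}_6\leq c$. Testing with $\projh\Delta\vvel^m_\rho$ (which lies in the Galerkin span) and taking real parts, one uses that $\int_{\Omega_\rho}\vvel^m_\rho\cdot\projh\Delta\overline{\vvel^m_\rho}=-\norm{\grad\vvel^m_\rho}_2^2$ is real, so the $ik$-term drops out, and rewrites the rotation contribution by the identity of Lemma \ref{lem:PropertiesRotTerm}:
\[
\norm{\projh\Delta\vvel^m_\rho}_2^2
=\tay\,\realpart\!\int_{\Omega_\rho}\!\rotterm{\vvel^m_\rho}\cdot\projh\Delta\overline{\vvel^m_\rho}
-\rey\,\realpart\!\int_{\Omega_\rho}\!\partial_1\vvel^m_\rho\cdot\projh\Delta\overline{\vvel^m_\rho}
-\realpart\!\int_{\Omega_\rho}\!\vf\cdot\projh\Delta\overline{\vvel^m_\rho}.
\]
The interior part of the Lemma~\ref{lem:PropertiesRotTerm} identity is bounded by $c\,\norm{\grad\vvel^m_\rho}_2^2$; its boundary part, supported on $\partial\Omega$, is bounded by $c(\Omega)\norm{\grad\vvel^m_\rho}_{\LR{2}(\partial\Omega)}^2$, and a trace-interpolation inequality near $\partial\Omega$ together with Lemma \ref{lem:EstSecondDerivativesStokesOp} applied on $\Omega_\rho$ (whose constant is \emph{independent of $\rho$}) estimates this by $\varepsilon\norm{\projh\Delta\vvel^m_\rho}_2^2+c(\varepsilon)\norm{\grad\vvel^m_\rho}_2^2$. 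Choosing $\varepsilon$ small, absorbing, and treating the $\rey$- and $\vf$-terms by Young's inequality (again via Lemma \ref{lem:EstSecondDerivativesStokesOp}), one obtains $\norm{\grad\vvel^m_\rho}_2+\norm{\grad^2\vvel^m_\rho}_2+\norm{\vvel^m_\rho}_6\leq c$ with $c=c(\Omega,\rey,\tay,k,\vf)$ independent of $\rho$ and $m$. Letting $m\to\infty$ for fixed $\rho$ (Poincaré bounds $\vvel^m_\rho$ in $\WSR{2}{2}(\Omega_\rho)$; extract a weak limit; recover $\vpres_\rho\in\WSR{1}{2}(\Omega_\rho)$ by de Rham's theorem) yields a strong solution $(\vvel_\rho,\vpres_\rho)$ of \eqref{sys:RotatingOseen_resolvent} on $\Omega_\rho$ with $\vvel_\rho=0$ on $\partial\Omega_\rho$, still satisfying the above bound.

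Now I would pass $\rho\to\infty$. Normalising $\vpres_\rho$ to have vanishing mean over $\Omega_R$ and reading $\grad\vpres_\rho$ off the equation, the bounds above give, for every fixed $r>4R$, uniform-in-$\rho$ control of $(\vvel_\rho,\vpres_\rho)$ in $\WSR{2}{2}(\Omega_r)\times\WSR{1}{2}(\Omega_r)$ (the term $(\eone\wedge x)\cdot\grad\vvel_\rho$ is bounded in $\LR{2}(\Omega_r)$ because $\snorm{\eone\wedge x}\leq r$ there). A Cantor diagonal argument produces $(\vvel,\vpres)$ with $\vvel_\rho\wra\vvel$ in $\WSRloc{2}{2}(\overline{\Omega})$ and $\vpres_\rho\wra\vpres$ in $\WSRloc{1}{2}(\overline{\Omega})$, solving \eqref{sys:RotatingOseen_resolvent} on $\Omega$, with $\grad\vvel,\grad^2\vvel\in\LR{2}(\Omega)$, $\vvel\in\LR{6}(\Omega)$, and $\vvel=0$ on $\partial\Omega$ by weak continuity of the trace.

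Finally, to upgrade to the $\LR{q}$ class for an arbitrary $q\in(1,2)$, I would use the cut-offs $\cutoff_0,\cutoff_1$ of Lemma \ref{lem:RotatingOseen_Uniqueness_resolvent} and set $\uvel\coloneqq\cutoff_1\vvel-\bogopr(\vvel\cdot\grad\cutoff_1)$, $\upres\coloneqq\cutoff_1\vpres$, $\wvel\coloneqq\cutoff_0\vvel-\bogopr(\vvel\cdot\grad\cutoff_0)$, $\wpres\coloneqq\cutoff_0\vpres$, the Bogovski\u\i{} arguments being admissible since $\vvel\cdot\grad\cutoff_i$ is compactly supported with vanishing mean. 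Then $(\uvel,\upres)$ solves the whole-space problem \eqref{sys:RotatingOseen_WholeSpace_resolvent} with right-hand side equal to $\cutoff_1\vf$ plus terms supported in a bounded annulus where $\vvel\in\WSR{2}{2}\embeds\LR{\infty}$ and $\vpres\in\WSR{1}{2}\embeds\LR{6}$, hence in $\LR{q}(\R^3)$; since $\uvel\in\LR{6}(\R^3)$, the uniqueness statement of Theorem \ref{thm:RotatingOseen_WholeSpace_resolvent} (applied with any $\theta\geq\rey^2/\tay$) identifies $\uvel$ with the solution it provides, so $\rotdertermFTsimple{\uvel},\grad^2\uvel,\partial_1\uvel,\grad\upres\in\LR{q}(\R^3)$, $\uvel\in\LR{2q/(2-q)}(\R^3)$, $\grad\uvel\in\LR{4q/(4-q)}(\R^3)$. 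Meanwhile $(\wvel,\wpres)$ solves a Stokes resolvent problem in $\Omega_{4R}$ with right-hand side in $\LR{q}(\Omega_{4R})$, so $\wvel\in\WSR{2}{q}(\Omega_{4R})$ and $\grad\wpres\in\LR{q}(\Omega_{4R})$ by standard bounded-domain Stokes regularity. Since $(\vvel,\vpres)=(\uvel,\upres)$ on $\ball^{2R}$ and $(\vvel,\vpres)=(\wvel,\wpres)$ on $\Omega_{2R}$, patching gives $\grad^2\vvel,\partial_1\vvel,\grad\vpres\in\LR{q}(\Omega)$, $\vvel\in\LR{2q/(2-q)}(\Omega)$, $\grad\vvel\in\LR{4q/(4-q)}(\Omega)$, and then $\rotdertermFTsimple{\vvel}\in\LR{q}(\Omega)$ follows from the equation; as $q\in(1,2)$ was arbitrary, this is the claim. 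The step I expect to be the main obstacle is the $\rho$-uniform second-order energy estimate: one must control the boundary term generated by the \emph{unbounded} coefficient $\eone\wedge x$ via Lemma \ref{lem:PropertiesRotTerm} and absorb it using the $\rho$-independent second-derivative bound of Lemma \ref{lem:EstSecondDerivativesStokesOp} together with a trace-interpolation inequality concentrated near $\partial\Omega$; the passage to the limit and the $\LR{q}$-bootstrap are then comparatively routine, the latter resting directly on Theorem \ref{thm:RotatingOseen_WholeSpace_resolvent} and bounded-domain Stokes regularity.
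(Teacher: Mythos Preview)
Your proof is correct and follows essentially the same approach as the paper: Galerkin on truncated domains with Stokes eigenfunctions, the same two energy estimates via Lemmas~\ref{lem:EstSecondDerivativesStokesOp} and~\ref{lem:PropertiesRotTerm}, a limit passage, and a cut-off bootstrap through Theorem~\ref{thm:RotatingOseen_WholeSpace_resolvent}. The only notable difference is in the limit $\rho\to\infty$: you use a Cantor diagonal argument with local weak convergence, whereas the paper multiplies $\vvel^m$ by a \emph{rotationally symmetric} cut-off $\cutoff_m$ (so that $\eone\wedge x\cdot\grad\cutoff_m=0$), obtaining a $\rho$-uniform global $\LR{2}$ bound on $\rotdertermFTsimple{\wvel^m}$ before extracting the limit; for the interior $\LR{q}$ regularity the paper also simply invokes H\"older ($\WSR{2}{2}(\Omega_\rho)\subset\WSR{2}{q}(\Omega_\rho)$) rather than your Stokes resolvent argument, but your variant is equally valid.
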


\begin{proof}
Let $R>0$ such that $\partial\ball_R\subset\Omega$, and take 
$m\in\N$ with $m>2R$. 
Since the Stokes operator in the bounded domain $\Omega_m$ 
is a positive self-adjoint invertible operator (see \cite[Chapter III, Theorem 2.1.1]{SohrBook}),
there exists a sequence $(\psi_j)_{j\in\N}$ of (real valued) eigenfunctions 
and $(\mu_j)_{j\in\N}\subset(0,\infty)$ of eigenvalues, that is,
\[
-\projh \Delta \psi_j=\mu_j\psi_j, \qquad 
\psi_j\in\LRsigma{2}(\Omega_m)\cap\WSRN{1}{2}(\Omega_m)\cap\WSR{2}{2}(\Omega_m), 
\]
normalized such that
\[
\int_{\Omega_m} \psi_j \cdot \psi_\ell \,\dx= \frac{1}{\mu_j}\delta_{j\ell}.
\]
We show the existence of a function 
$\uvel=\uvel^m_n\in\spanspacemn
\coloneqq
\vecspan_{\C}\setcl{\psi_j}{j=1,\ldots,n}$
satisfying
\begin{align}\label{eq:RotatingOseen_Resolvent:ApproxSol}
\int_{\Omega_m}
\bb{
\tay\rotdertermFT{\uvel}
-\Delta\uvel
-\rey\partial_1\uvel
}
\cdot\psi_j
\,\dx
=\int_{\Omega_m} \vf\cdot\psi_j\,\dx
\end{align}
for all $j\in \set{1,\ldots,n}$.
Since
\[
\uvel=\sum_{\ell=1}^n\xi_\ell\psi_{\ell}
\]
for some $\xi_1,\ldots,\xi_n\in\C$, this is equivalent to solving the algebraic equation
\begin{align}\label{eq:RotatingOseen_Resolvent:GalerkinAlgebraic}
(\idmatrix + M) \xi= c
\end{align}
with $\xi=(\xi_1,\ldots,\xi_n) \in\C^n$ and
\begin{align*}
M&=(M_{\ell j})\in\C^{n\times n}, &
M_{\ell j} 
&\coloneqq
\int_{\Omega_m} \bp{
\tay\rotdertermFT{\psi_\ell}
-\rey \partial_1 \psi_\ell
}\cdot\psi_j\,\dx, \\
c&=(c_j)\in\C^n, &
c_j
&\coloneqq
\int_{\Omega_m} \vf\cdot\psi_j\,\dx.
\end{align*}
Note that \eqref{eq:RotatingOseen_Resolvent:GalerkinAlgebraic} is a resolvent problem for the skew-Hermitian matrix $M$, 
which is uniquely solvable.
Existence of a unique solution $\uvel=\uvel^m_n\in\spanspacemn$ to
\eqref{eq:RotatingOseen_Resolvent:ApproxSol} thus follows.

Next we need suitable estimates for $\uvel=\uvel^m_n$.
Multiplication of both sides of \eqref{eq:RotatingOseen_Resolvent:ApproxSol} 
by the complex conjugate coefficient $\xi_j^\ast$ 
and summation over $j=1,\dots,n$ yields
\[
\norm{\grad\uvel}_2^2
+ \int_{\Omega_m}\bp{
\tay\rotdertermFT{\uvel}
-\rey\partial_1\uvel
}
\cdot\uvel^\ast\,\dx
= \int_{\Omega_m} \vf\cdot\uvel^\ast \,\dx.
\]
Because the integral term on the left-hand side is purely imaginary,
taking the real part of this equation 
leads to the estimate
\[
\norm{\grad\uvel}_2^2
\leq \norm{\vf}_{6/5}\norm{\uvel}_{6}.
\] 
Recalling the Sobolev inequality
$\norm{\uvel}_{6}\leq\Cc{c}\norm{\grad\uvel}_2$, 
we obtain
\begin{align}\label{est:RotatingOseen_Resolvent:FirstDer}
\norm{\uvel}_{6}
+\norm{\grad\uvel}_2
\leq \Cc[const:RotatingOseen_linear:FirstDer]{c}
\norm{\vf}_{6/5},
\end{align}
where $\const{const:RotatingOseen_linear:FirstDer}$ is independent of $m$.
If we  multiply both sides of \eqref{eq:RotatingOseen_Resolvent:ApproxSol} 
by $\mu_j\xi_j^\ast$
and sum over $j=1,\dots,n$, we obtain
\[
\norm{\projh\Delta\uvel}_2^2
= 
\int_{\Omega_m} \bb{\vf
-\tay\rotdertermFT{\uvel}
+\rey\partial_1\uvel}
\cdot\projh\Delta\uvel^\ast\,\dx.
\]
Taking real part of both sides and observing that
\[
\realpart \int_{\Omega_m} ik\uvel\cdot\projh\Delta\uvel^\ast\,\dx
= - \realpart \bp{ik \norm{\grad\uvel}_{2}^2} =0,
\]
we conclude using H\"older's inequality the estimate
\begin{align}\label{est:RotatingOseen_Resolvent:StokesOp}
\norm{\projh\Delta\uvel}_2^2
\leq \bp{
\norm{\vf}_2 +\rey\norm{\partial_1\uvel}_2}
\norm{\projh\Delta\uvel}_2 +\realpart
\int_{\Omega_m} \tay\rotterm{\uvel}
\cdot\projh\Delta\uvel^\ast\,\dx.
\end{align}
Using Lemma \ref{lem:PropertiesRotTerm}, we estimate the remaining integral on the right-hand side to conclude
\[
\realpart\int_{\Omega_m}\tay\rotterm{\uvel}\cdot
\projh\Delta\uvel^\ast\,\dx
\leq \Cc{c}\tay\bp{
\norm{\grad\uvel}_{2;\partial\Omega}^2
+\norm{\grad\uvel}_{2;\Omega_m}^2
}
\]
with $\Cclast{c}$ independent of $m$.
Employing the trace inequality \cite[Theorem II.4.1]{GaldiBookNew} on the domain $\Omega_{R}$, we further estimate
\begin{align*}
\realpart\int_{\Omega_m}\tay\rotterm{\uvel}\cdot\projh\Delta\uvel^\ast\,\dx
&\leq \Cc{c}\tay\bp{
\norm{\grad\uvel}_{2;\Omega_{R}}
\norm{\grad\uvel}_{1,2;\Omega_{R}}
+\norm{\grad\uvel}_{2;\Omega_m}^2
}
\\
&\leq \Cc{c}(\varepsilon)
\np{\tay+\tay^2}\norm{\grad\uvel}_{2;\Omega_m}^2
+\varepsilon\norm{\grad^2\uvel}_{2;\Omega_m}^2
\end{align*} 
for small $\varepsilon>0$. 
From Lemma \ref{lem:EstSecondDerivativesStokesOp} we deduce
\[
\realpart\int_{\Omega_m}\tay\rotterm{\uvel}\cdot\projh\Delta\uvel^\ast\,\dx
\leq \Cc{c}(\varepsilon)\np{\tay+\tay^2}
\norm{\grad\uvel}_{2;\Omega_m}^2
+\varepsilon\Cc[const:RotatingOseen_linear:EstimateStokesOp]{c}\norm{\projh\Delta\uvel}_{2;\Omega_m}^2
\]
with a constant $\const{const:RotatingOseen_linear:EstimateStokesOp} >0$ independent of $m$.
Combining this estimate with \eqref{est:RotatingOseen_Resolvent:StokesOp}, 
choosing $\varepsilon$ sufficiently small
and employing estimate \eqref{est:RotatingOseen_Resolvent:FirstDer},
we arrive at
\[
\norm{\projh\Delta\uvel}_{2;\Omega_m} 
\leq
\Cc{c}
\bp{1+\rey+\sqrt{\tay+\tay^2}}
\bp{\norm{\vf}_2 + \norm{\vf}_{6/5}}.
\]
Using Lemma \ref{lem:EstSecondDerivativesStokesOp} 
and estimate \eqref{est:RotatingOseen_Resolvent:FirstDer} once again
and restoring the original notation,
we end up with
\begin{align}\label{est:RotatingOseen_Resolvent:SecondDerivatives}
\norm{\grad^2\uvel^m_n}_{2;\Omega_m}
\leq 
\Cc{c}\bp{ 
\norm{\projh\Delta\uvel^m_n}_{2;\Omega_m} 
+\norm{\grad\uvel^m_n}_{2;\Omega_m}
}
\leq
\Cc{c}
\bp{\norm{\vf}_2 + \norm{\vf}_{6/5}}
\end{align}
with $\Cclast{c}$ independent of $m$.

In particular, we see from \eqref{est:RotatingOseen_Resolvent:FirstDer},
\eqref{est:RotatingOseen_Resolvent:SecondDerivatives}
and Poincar\'e's inequality
that $(\uvel^m_n)$ is uniformly bounded in $\WSR{2}{2}(\Omega_m)$ 
and thus contains a subsequence that converges weakly to some function
$\vvel^m\in\LRsigma{2}(\Omega_m)\cap\WSRN{1}{2}(\Omega_m)\cap\WSR{2}{2}(\Omega_m)$,
which obeys the estimate
\begin{align}\label{est:RotatingOseen_Resolvent:WeakLimitOmegam}
\norm{\vvel^m}_{6;\Omega_m}
+\norm{\grad\vvel^m}_{1,2;\Omega_m}
\leq\Cc[const:RotatingOseen_linear:WeakLimitOmegam]{c}\bp{\norm{\vf}_{6/5}+\norm{\vf}_{2}}
\end{align}
with $\const{const:RotatingOseen_linear:WeakLimitOmegam}$ independent of $m$.
Moreover, $\vvel^m$ 
satisfies \eqref{eq:RotatingOseen_Resolvent:ApproxSol} for all $j\in\N$, 
whence there exists $\vpres^m\in\WSR{1}{2}(\Omega_m)$ 
such that
\begin{align}\label{sys:RotatingOseen_Resolvent:WeakLimitOmegam}
\begin{pdeq}
\tay\rotdertermFT{\vvel^m}
- \Delta \vvel^m 
- \rey \partial_1 \vvel^m 
+ \grad \vpres^m 
&=\vf && \tin\Omega_m, \\
\Div\vvel^m&=0 && \tin\Omega_m,\\
\vvel^m&=0 && \ton\partial\Omega_m;
\end{pdeq}
\end{align}
see \cite[Corollary III.5.1]{GaldiBookNew}.
Since $\rottermsimple{\vvel^m}\in\LRsigma{2}(\Omega_m)$ by Lemma \ref{lem:PropertiesRotTerm},
we deduce from \eqref{sys:RotatingOseen_Resolvent:WeakLimitOmegam} and \eqref{est:RotatingOseen_Resolvent:WeakLimitOmegam} the estimate
\begin{align*}
\tay&\norm{\rotdertermFTsimple{\vvel^m}}_{2}
=\tay\norml{\projh \rotdertermFT{\vvel^m}}_{2} \\
&\leq \norm{\projh \vf}_{2}
+\norm{\projh\Delta\vvel^m}_{2}
+ \rey\norm{\projh \partial_1\vvel^m}_{2}
\leq\Cc{c}\bp{\norm{\vf}_{6/5}+\norm{\vf}_{2}}.
\end{align*}
Combining the estimate above with \eqref{est:RotatingOseen_Resolvent:WeakLimitOmegam}, we conclude
\begin{align}\label{est:RotatingOseen_Resolvent:WeakLimitOmegam_Complete}
\begin{split}
\norm{\vvel^m}_{6;\Omega_m}
+\norm{\grad\vvel^m}_{1,2;\Omega_m}
&+\tay\norm{\rotdertermFTsimple{\vvel^m}}_{2;\Omega_m}\\
&\qquad\qquad\qquad\qquad\quad
\leq\Cc[const:RotatingOseen_linear:WeakLimitOmegamWithRot]{c}\bp{\norm{\vf}_{6/5}+\norm{\vf}_{2}}
\end{split}
\end{align}
with $\const{const:RotatingOseen_linear:WeakLimitOmegamWithRot}$ independent of $m$.

Now we introduce a sequence of
rotationally symmetric ``cut-off'' functions $(\cutoff_m)\subset\CRci(\R^3)$
satisfying 
\[
\cutoff_m(x)=1 \ \tfor \snorm{x}\leq \frac{m}{2}, \qquad  
\cutoff_m(x)=0 \ \tfor \snorm{x}\geq \frac{3m}{4}, \qquad
\snorm{\grad\cutoff_m}\leq \frac{\Cc{c}}{m}, \qquad
\snorm{\grad^2\cutoff_m}\leq \frac{\Cc{c}}{m^2},
\]
and we set $\wvel^m\coloneqq\cutoff_m\vvel^m$.
Then $\wvel^m$ is an element of $\WSR{2}{2}(\Omega)$.
Moreover, the rotational symmetry of $\cutoff_m$ implies $\eone\wedge x\cdot\grad\cutoff_m=0$.
Therefore, from \eqref{est:RotatingOseen_Resolvent:WeakLimitOmegam_Complete} 
and the properties of $\cutoff_m$, 
we deduce the estimate
\begin{align*}
\norm{\wvel^m}_{6}
+\norm{\grad\wvel^m}_{1,2}
+\tay\norm{\rotdertermFTsimple{\wvel^m}}_{2}
&\leq\Cc{c}
\bp{\norm{\vf}_{6/5}+\norm{\vf}_{2}}
\end{align*}
with $\Cclast{c}$ independent of $m$.
This implies the existence of a subsequence, still denoted by $(\wvel^m)$, 
that converges in the sense of distributions 
to some function $\vvel\in\WSRloc{2}{2}(\Omega)$
that satisfies 
\begin{align}\label{est:RotatingOseen_Resolvent:FinalWeakLimit}
\norm{\vvel}_{6}
+\norm{\grad\vvel}_{1,2}
+\tay\norm{\rotdertermFTsimple{\vvel}}_{2}
&\leq\const{const:RotatingOseen_linear:WeakLimitOmegamWithRot}
\bp{\norm{\vf}_{6/5}+\norm{\vf}_{2}}.
\end{align}
Moreover, $\restriction{\vvel}{\partial\Omega}=0$.
Let $\varphi\in\CRci(\Omega)$. 
We choose $m_0\in\N$ such that $\supp\varphi$ is contained in $\Omega_{m_0/2}$.
For $m\geq m_0$ we have $\wvel^m=\vvel^m$ on $\Omega_{m_0/2}$ and thus
\[
\int_{\Omega} \wvel^m \cdot\grad\varphi\,\dx
=\int_{\Omega} \vvel^m \cdot\grad\varphi\,\dx
= 0
\]
by \eqrefsub{sys:RotatingOseen_Resolvent:WeakLimitOmegam}{2}.
Passing to the limit $m\to\infty$, we conclude $\Div\vvel=0$.
Now let $\psi\in\CRcisigma(\Omega)$ 
and choose $m_0$ such that $\supp\psi\subset\Omega_{m_0/2}$.
With the same argument as above, 
for $m\geq m_0$ we obtain from \eqrefsub{sys:RotatingOseen_Resolvent:WeakLimitOmegam}{1}
that
\begin{align*}
&\int_{\Omega}
\bp{
\tay\rotdertermFT{\wvel^m} - \Delta \wvel^m - \rey \partial_1 \wvel^m -\vf
}\cdot\psi\,\dx
\\
&\quad=
\int_{\Omega}
\bp{
\tay\rotdertermFT{\vvel^m} - \Delta \vvel^m - \rey \partial_1 \vvel^m +\grad\vpres^m -\vf
}\cdot\psi\,\dx=0.
\end{align*}
Therefore, by passing to the limit $m\to\infty$, 
we see
\[
\int_{\Omega}
\bp{
\tay\rotdertermFT{\vvel} - \Delta \vvel - \rey \partial_1 \vvel -\vf
}\cdot\psi\,\dx=0
\]
for all $\psi\in\CRcisigma(\Omega)$.
Consequently, by Helmholtz decomposition, 
there exists a function $\vpres$ 
with $\grad\vpres\in\LR{2}(\Omega)$ 
such that $(\vvel,\vpres)$ is a solution to \eqref{sys:RotatingOseen_resolvent}.

It remains to show that $\vvel$ and $\vpres$
belong to the correct function spaces.
By H\"older's inequality, we directly find that 
\begin{align}\label{Mads2}
\vvel\in\WSR{2}{q}(\Omega_{\rho}), 
\qquad 
\vpres\in\WSR{1}{q}(\Omega_{\rho})
\end{align}
for any $\rho>R$ and all $q\in[1,2]$.
Repeating the ``cut-off'' argument from \eqref{eq:RotatingOseen_Cutoff_OuterFunction},
we obtain $\np{\uvel,\upres}$ which satisfy \eqref{sys:RotatingOseen_OuterFunction}
for some function $f\in\LR{2}(\R^3)$ with compact support.
In particular, this implies $f\in\LR{q}(\R^3)$ for all $q\in(1,2)$.
Theorem \ref{thm:RotatingOseen_WholeSpace_resolvent} yields existence of a solution to \eqref{sys:RotatingOseen_OuterFunction}
satisfying \eqref{est:RotatingOseen_WholeSpace_resolvent}. 
Since $\uvel\in\LR{6}(\R^3)$, Theorem \ref{thm:RotatingOseen_WholeSpace_resolvent} further ensures that 
$(\uvel,\upres)$ coincides with this solution.
We thus have
\[
\rotdertermFTsimple\uvel,\,
\grad^2\uvel,\,
\partial_1\uvel,\,
\grad\upres\in\LR{q}(\R^3),\
\uvel\in\LR{2q/(2-q)}(\R^3),\
\grad\uvel\in\LR{4q/(4-q)}(\R^3)
\] 
Since $\vvel=\uvel$ and $\vpres=\upres$ on $\ball^{2R}$, the integrability properties above in combination with \eqref{Mads2} show that
$\vvel$ and $\vpres$ belong to the correct function spaces. 
\end{proof}

Combining Lemma \ref{lem:RotatingOseen_Uniqueness_resolvent}, 
Lemma \ref{lem:RotatingOseen_ResolventEst_Final} 
and Lemma \ref{lem:RotatingOseen_Existence},
we can finally complete the proof of Theorem \ref{thm:RotatingOseen_resolvent}.

\begin{proof}[Proof of Theorem \ref{thm:RotatingOseen_resolvent}]
The uniqueness statement is a direct consequence of Lemma \ref{lem:RotatingOseen_Uniqueness_resolvent}.
Estimate \eqref{est:RotatingOseen_resolvent} has been proved in 
Lemma \ref{lem:RotatingOseen_ResolventEst_Final}.
It thus remains to show existence of a solution for $\vf\in\LR{q}(\Omega)$.
Consider a sequence $\np{\vf_j}\subset\CRci(\Omega)$ that converges to $\vf$ in $\LR{q}(\Omega)$.
By Lemma \ref{lem:RotatingOseen_Existence}, for each $j\in\N$ 
there exists a solution $\np{\vvel,\vpres}=\np{\vvel_j,\vpres_j}$ to \eqref{sys:RotatingOseen_resolvent}
with $\vf=\vf_j$, which obeys estimate \eqref{est:RotatingOseen_resolvent} by 
Lemma \ref{lem:RotatingOseen_ResolventEst_Final}.
Additionally, this implies that $(\vvel_j,\grad\vpres_j)$ is a Cauchy sequence
in the function space defined by the norm on the left-hand side of \eqref{est:RotatingOseen_resolvent},
and thus possesses a limit $(\vvel,\grad\vpres)$,
which satisfies \eqref{sys:RotatingOseen_resolvent} and \eqref{est:RotatingOseen_resolvent}.
\end{proof}

\subsection{The time-periodic linear problem}

\begin{proof}[Proof of Theorem \ref{thm:RotatingOseen_linear}]
An application of the Fourier transform $\FT_\torus$ on $\torus$ to \eqref{sys:RotatingOseen_linear} 
reduces the uniqueness statement to the corresponding uniqueness result for the resolvent problem 
established in Theorem \ref{thm:RotatingOseen_resolvent}.
To show existence, consider $f\in\AR(\torus;\LR{q}(\Omega))$. Then
\[
f(t,x)=
\sum_{k\in\Z} f_k(x) \e^{ikt}
\] 
with $f_k\in\LR{q}(\Omega)$.
Let $\np{\uvel_k,\upres_k}=\np{\vvel,\vpres}$ be a solution to the resolvent problem 
\eqref{sys:RotatingOseen_resolvent} with $\vf=\uf_k$ 
that exists due to 
Theorem \ref{thm:RotatingOseen_resolvent}.
We define 
\[
\uvel(t,x)\coloneqq\sum_{k\in\Z} \uvel_k(x) \e^{ikt}, 
\qquad
\upres(t,x)\coloneqq\sum_{k\in\Z} \upres_k(x) \e^{ikt}.
\]
By \eqref{est:RotatingOseen_resolvent}, 
$\uvel$ and $\upres$ are well defined and satisfy \eqref{sys:RotatingOseen_linear}. 
We directly conclude estimate
\eqref{est:RotatingOseen_linear} from estimate \eqref{est:RotatingOseen_resolvent}.
\end{proof}

\section{The nonlinear problem}\label{sec:ProofNonlinear}

We return to the nonlinear problem \eqref{sys:RotatingOseen_Dimensionless}.
At first, we reformulate it as a problem with homogeneous boundary conditions.
To this end, fix $R>0$ such that $\partial\ball_R\subset\Omega$.
Let $\varphi\in\CRci(\R^3)$ be a smooth function satisfying 
$\varphi(x)=1$ if $\snorm{x}<R$, and $\varphi(x)=0$ if $\snorm{x}>2R$,
and define
\[
\Uvel\colon\torus\times\R^3\to\R^3, \qquad
\Uvel(t,x)=\frac{1}{2}\rot\bb{\bp{\veltrans(t)\eone\wedge x-\tay \eone\snorm{x}^2}\varphi(x)}.
\]  
Then $\Uvel(t,\cdot)\in\CRci(\R^3)$ for all $t\in\torus$, 
$\Uvel\in\CR{1}(\torus\times\R^3)$,
$\Div\Uvel=0$, 
and a brief calculation shows $\Uvel(t,x)=\veltrans(t)\eone+\tay\eone\wedge x$ 
for $(t,x)\in\torus\times\partial\Omega$.
Now define
$\vvel\coloneqq\uvel-\Uvel$
and $\vpres\coloneqq\upres$.
Then $\np{\uvel,\upres}$ solves \eqref{sys:RotatingOseen_Dimensionless}
if and only if $\np{\vvel,\vpres}$ solves
\begin{align}\label{sys:RotatingOseen_AfterLifting}
\begin{pdeq}
\tay\rotderterm{\vvel} -\Delta\vvel - \rey \partial_1 \vvel +\grad\vpres&= f+\caln(\vvel)
&& \tin \torus\times\Omega, \\
\Div\vvel&=0
&& \tin \torus\times\Omega, \\
\vvel&=0
&& \ton \torus\times\partial\Omega, \\
\lim_{\snorm{x}\to\infty} \vvel(t,x) &= 0
&& \tfor t\in \torus,
\end{pdeq}
\end{align} 
where 
\begin{align*}
\caln(\vvel)
&\coloneqq
\np{\projcompl\veltrans}\partial_1\vvel
-\tay\rotderterm{\Uvel}\\
&\quad+\Delta\Uvel
+\veltrans\partial_1\Uvel
-\vvel\cdot\grad\vvel
-\Uvel\cdot\grad\vvel
-\vvel\cdot\grad\Uvel
-\Uvel\cdot\grad\Uvel.
\end{align*}
Recall that $\projcompl\alpha=\alpha-\rey$.
It thus remains to show existence of a solution to the nonlinear system
\eqref{sys:RotatingOseen_AfterLifting}.

\begin{proof}[Proof of Theorem \ref{thm:RotatingOseen_nonlinear}] 
We define the function space 
\begin{align*}
\calx^q
&\coloneqq\setcl{\vvel\in\LRloc{1}(\torus\times\Omega)}{\norm{\vvel}_{\calx^q}<\infty},
\\
\norm{\vvel}_{\calx^q}
&\coloneqq\tay\norm{\rotdertermsimple\vvel}_{\AR^{q}}
+\norm{\grad^2\vvel}_{\AR^q}
+\rey\norm{\partial_1\vvel}_{\AR^q}
+\rey^{1/2}\norm{\vvel}_{\AR^{s_1}}
+\rey^{1/4}\norm{\grad\vvel}_{\AR^{s_2}},
\end{align*}
where $s_1=2q/\np{2-q}$, $s_2=4q/\np{4-q}$ and  
\[
\norm{h}_{\AR^s}\coloneqq\norm{h}_{\AR(\torus;\LR{s}(\Omega))}.
\]
At first, we derive suitable estimates of $\caln(\vvel)$. 
For example, analogously to the proof of Proposition \ref{prop:Hoelder_AR}, we have
\[
\norm{\np{\projcompl\veltrans}\partial_1\vvel}_{\AR^q}
\leq\norm{\projcompl\veltrans}_{\AR(\torus;\R)}\norm{\partial_1\vvel}_{\AR^q}
\leq\varepsilon\norm{\partial_1\vvel}_{\AR^q}
\leq \varepsilon \rey^{-1} \norm{\vvel}_{\calx^q}.
\]
Moreover, since $\frac{4q}{4-q}\leq2\leq\frac{3q}{3-q}$, 
we can employ estimates \eqref{est:Hoelder_AR} and \eqref{est:Interpolation_AR}
to obtain
\[
\norm{\vvel\cdot\grad\vvel}_{\AR^q}
\leq\norm{\vvel}_{\AR^{2q/(2-q)}}\norm{\grad\vvel}_{\AR^2}
\leq\Cc{c}\norm{\vvel}_{\AR^{2q/(2-q)}}
\norm{\grad\vvel}_{\AR^{4q/(4-q)}}^{1-\theta}\norm{\grad\vvel}_{\AR^{3q/(3-q)}}^\theta
\]
with $\theta=\frac{10q-12}{q}$. 
By the Sobolev inequality we thus deduce
\[
\norm{\vvel\cdot\grad\vvel}_{\AR^q}
\leq\Cc{c}\rey^{-1/2-\np{1-\theta}/4}\norm{\vvel}_{\calx^q}^{2-\theta}
\norm{\grad^2\vvel}_{\AR^{q}}^\theta
\leq\Cc{c}\rey^{-(3q-3)/q}\norm{\vvel}_{\calx^q}^2.
\]
The remaining terms in $\caln(\vvel)$ can be estimated in a similar fashion, which results in
\begin{align}\label{est:RotatingOseen_NonlinearTerms}
\begin{split}
\norm{\caln(\vvel)}_{\AR^q}
&\leq\Cc{c}\bp{
\varepsilon\rey^{-1}\norm{\vvel}_{\calx^q}
+\rey^{-(3q-3)/q}\norm{\vvel}_{\calx^q}^2\\
&\qquad+\np{\rey+\tay+\varepsilon}
\np{1+\rey+\tay+\varepsilon+\norm{\tddt\veltrans}_{\AR(\torus;\R)}+\norm{\vvel}_{\calx^q}}}.
\end{split}
\end{align}
Now consider the problem 
\begin{align}\label{sys:RotatingOseen_FixedPointInteration}
\begin{pdeq}
\tay\rotderterm{\wvel} -\Delta\wvel - \rey \partial_1 \wvel +\grad\wpres&= f+\caln(\vvel)
&& \tin \torus\times\Omega, \\
\Div\wvel&=0
&& \tin \torus\times\Omega, \\
\wvel&=0
&& \ton \torus\times\partial\Omega,
\end{pdeq}
\end{align}
for given $\vvel\in\calx^q$. 
Due to estimate \eqref{est:RotatingOseen_NonlinearTerms} and Theorem \ref{thm:RotatingOseen_linear}
there exists a unique velocity field $\wvel\in\calx^q$ 
and a pressure field $\wpres$ with $\grad\wpres\in\AR^q$
that satisfy \eqref{sys:RotatingOseen_FixedPointInteration} and the estimate
\begin{align*}
\norm{\wvel}_{\calx^q}
\leq\const{const:RotatingOseen_linear}
\bp{\norm{f}_{\AR^q}+\norm{\caln(\vvel)}_{\AR^q}}
&\leq\Cc[const:RotatingOseen_SelfMapping]{c}\bp{
\varepsilon+\varepsilon\rey^{-1}\norm{\vvel}_{\calx^q}
+\rey^{-(3q-3)/q}\norm{\vvel}_{\calx^q}^2\\
&\qquad+\np{\rey+\tay+\varepsilon}
\np{1+\rey+\tay+\varepsilon+\norm{\tddt\veltrans}_{\AR(\torus;\R)}+\norm{\vvel}_{\calx^q}}}.
\end{align*}
We thereby obtain a solution map $\cals\colon\calx^q\to\calx^q$, $\vvel\mapsto\wvel$ which
is a self-mapping on the ball
\[
\calx^q_\delta\coloneqq\setcl{\vvel\in\calx^q}{\norm{\vvel}_{\calx^q}\leq\delta}
\]
provided 
\[
\const{const:RotatingOseen_SelfMapping}\bp{
\varepsilon
+\varepsilon\rey^{-1}\delta
+\rey^{-(3q-3)/q}\delta^2
+\np{\rey+\tay+\varepsilon}\np{1+\rey+\tay+\varepsilon+\norm{\tddt\veltrans}_{\AR(\torus;\R)}+\delta}}
\leq\delta.
\]
Recall that $\rho\in\bp{\frac{3q-3}{q},1}$. Choosing $\delta:=\lambda^\rho$, one readily verifies that there is a constant $\kappa>0$ depending on $\const{const:RotatingOseen_SelfMapping}$ such the condition above is satisfied with $\omega\leq \kappa\rey^\rho$, $\varepsilon=\rey^2$
and $\rey_0$ sufficiently small.
In the same way, one derives the estimate
\[
\norm{\caln(\vvel_1)-\caln(\vvel_2)}_{\AR^q}
\leq\Cc{c}
\bp{\varepsilon\rey^{-1}+\rey+\tay+\varepsilon
+\rey^{-(3q-3)/q}\np{\norm{\vvel_1}_{\calx^q}+\norm{\vvel_2}_{\calx^q}}}
\norm{\vvel_1-\vvel_2}_{\calx^q},
\]
which ensures that $\cals$ is a contraction on $\calx^q_\delta$ 
with a similar choice of parameters.
Finally, the contraction mapping principle yields the existence of a fixed point $\vvel\in\calx^q$
of $\cals$, and hence of a solution $\np{\vvel,\vpres}$ to \eqref{sys:RotatingOseen_AfterLifting}. 
Consequently, $\np{\uvel,\upres}\coloneqq\np{\vvel+\Uvel,\vpres}$ is a solution 
to \eqref{sys:RotatingOseen_Dimensionless}.
\end{proof}





\begin{thebibliography}{10}

\bibitem{Bruhat61}
F.~Bruhat.
\newblock {Distributions sur un groupe localement compact et applications \`a
  l'\'etude des repr\'esentations des groupes $p$-adiques.}
\newblock {\em Bull. Soc. Math. Fr.}, 89:43--75, 1961.

\bibitem{Burenkov_ExtensionFunctionsPreservationSeminorms}
V.~I. Burenkov.
\newblock Extension of functions with preservation of the {S}obolev seminorm.
\newblock {\em Trudy Mat. Inst. Steklov.}, 172:71--85, 1985.
\newblock (English transl.: \emph{Proc. Steklov Inst. Math.}, 3:81--95, 1987).

\bibitem{EdwardsGaudryBook}
R.~Edwards and G.~Gaudry.
\newblock {\em {Littlewood-Paley and multiplier theory.}}
\newblock {Berlin-Heidelberg-New York: Springer-Verlag}, 1977.

\bibitem{EiterKyed_tplinNS_PiFbook}
T.~Eiter and M.~Kyed.
\newblock Time-periodic linearized {N}avier-{S}tokes equations: {A}n approach
  based on {F}ourier multipliers.
\newblock In {\em Particles in flows}, Adv. Math. Fluid Mech., pages 77--137.
  Birkh\"{a}user/Springer, Cham, 2017.

\bibitem{EiterKyed_etpfslns_2018}
T.~Eiter and M.~Kyed.
\newblock Estimates of time-periodic fundamental solutions to the linearized
  {N}avier-{S}tokes equations.
\newblock {\em J. Math. Fluid Mech.}, 20(2):517--529, 2018.

\bibitem{FarwigHishidaMueller}
R.~Farwig, T.~Hishida, and D.~M{\"u}ller.
\newblock {$L^q$-theory of a singular winding integral operator arising from
  fluid dynamics}.
\newblock {\em Pac. J. Math.}, 215(2):297--312, 2004.

\bibitem{FarwigNeustupa_SpectrumOseenRotatingBody}
R.~Farwig and J.~Neustupa.
\newblock On the spectrum of an {O}seen-type operator arising from flow past a
  rotating body.
\newblock {\em Integral Equations Operator Theory}, 62:169--189, 2008.

\bibitem{FarwigNeustupa_SpectralPropertiesLq_2010}
R.~Farwig and J.~Neustupa.
\newblock {Spectral properties in $L^q$ of an Oseen operator modelling fluid
  flow past a rotating body}.
\newblock {\em Tohoku Math. J. (2)}, 62(2):287--309, 2010.

\bibitem{FarwigSohr_GenResEstStokes1994}
R.~Farwig and H.~Sohr.
\newblock Generalized resolvent estimates for the {S}tokes system in bounded
  and unbounded domains.
\newblock {\em J. Math. Soc. Japan}, 46(4):607--643, 1994.

\bibitem{GaldiKyed_TPSolNS3D:AsymptoticProfile}
G.~Galdi and M.~Kyed.
\newblock {\em Time-periodic solutions to the Navier-Stokes equations in the
  three-dimensional whole-space with a non-zero drift term: Asymptotic profile
  at spatial infinity}, volume 710 of {\em Contemporary Mathematics}, pages
  121--144.
\newblock 01 2018.

\bibitem{GaldiSohr2004}
G.~Galdi and H.~Sohr.
\newblock Existence and uniqueness of time-periodic physically reasonable
  {N}avier-{S}tokes flow past a body.
\newblock {\em Arch. Ration. Mech. Anal.}, 172(3):363--406, 2004.

\bibitem{Galdi_OnTheMotionRigidBodyInViscousLiquid_2002}
G.~P. Galdi.
\newblock On the motion of a rigid body in a viscous liquid: a mathematical
  analysis with applications.
\newblock In {\em Handbook of mathematical fluid dynamics, {V}ol. {I}}, pages
  653--791. North-Holland, Amsterdam, 2002.

\bibitem{GaldiBookNew}
G.~P. Galdi.
\newblock {\em {An introduction to the mathematical theory of the Navier-Stokes
  equations. Steady-state problems. 2nd ed.}}
\newblock {New York: Springer}, 2011.

\bibitem{GaldiTP2D13}
G.~P. Galdi.
\newblock {On Time-Periodic Flow of a Viscous Liquid past a Moving Cylinder}.
\newblock {\em Arch. Ration. Mech. Anal.}, 210(2):451--498, 2013.

\bibitem{Galdi_ViscousFlowPastBodyTPZeroAverage_2019}
G.~P. Galdi.
\newblock Viscous flow past a body translating by time-periodic motion with
  zero average.
\newblock 2019.

\bibitem{KyedGaldi_asplqesoserfI}
G.~P. Galdi and M.~Kyed.
\newblock A simple proof of {$L^q$}-estimates for the steady-state {O}seen and
  {S}tokes equations in a rotating frame. {P}art {I}: {S}trong solutions.
\newblock {\em Proc. Amer. Math. Soc.}, 141(2):573--583, 2013.

\bibitem{KyedGaldi_asplqesoserfII}
G.~P. Galdi and M.~Kyed.
\newblock A simple proof of {$L^q$}-estimates for the steady-state {O}seen and
  {S}tokes equations in a rotating frame. {P}art {II}: {W}eak solutions.
\newblock {\em Proc. Amer. Math. Soc.}, 141(4):1313--1322, 2013.

\bibitem{GaldiKyed_TPflowViscLiquidpBody_2018}
G.~P. Galdi and M.~Kyed.
\newblock Time-periodic flow of a viscous liquid past a body.
\newblock In {\em Partial differential equations in fluid mechanics}, volume
  452 of {\em London Math. Soc. Lecture Note Ser.}, pages 20--49. Cambridge
  Univ. Press, Cambridge, 2018.

\bibitem{GaldiKyed_TPSolNSE_Handbook2018}
G.~P. Galdi and M.~Kyed.
\newblock Time-periodic solutions to the {N}avier-{S}tokes equations.
\newblock In {\em Handbook of mathematical analysis in mechanics of viscous
  fluids}, pages 509--578. Springer, Cham, 2018.

\bibitem{GaldiSilvestre_StrongSolNSObstacle_2005}
G.~P. Galdi and A.~L. Silvestre.
\newblock Strong solutions to the {N}avier-{S}tokes equations around a rotating
  obstacle.
\newblock {\em Arch. Ration. Mech. Anal.}, 176(3):331--350, 2005.

\bibitem{GaldiSilvestre_ExistenceTPSolutionsNSAroundMovingBody_2006}
G.~P. Galdi and A.~L. Silvestre.
\newblock {Existence of time-periodic solutions to the Navier-Stokes equations
  around a moving body.}
\newblock {\em Pac. J. Math.}, 223(2):251--267, 2006.

\bibitem{GaldiSilvestre_MotionRigidBodyNSLiquidTPForce_2009}
G.~P. Galdi and A.~L. Silvestre.
\newblock {On the motion of a rigid body in a Navier-Stokes liquid under the
  action of a time-periodic force.}
\newblock {\em Indiana Univ. Math. J.}, 58(6):2805--2842, 2009.

\bibitem{GeissertHieberNguyen_TP2016}
M.~{Geissert}, M.~{Hieber}, and T.~H. {Nguyen}.
\newblock {A general approach to time periodic incompressible viscous fluid
  flow problems.}
\newblock {\em {Arch. Ration. Mech. Anal.}}, 220(3):1095--1118, 2016.

\bibitem{Grafakos1}
L.~Grafakos.
\newblock {\em {Classical Fourier analysis. 2nd ed.}}
\newblock {New York, NY: Springer}, 2008.

\bibitem{Grafakos2}
L.~Grafakos.
\newblock {\em {Modern Fourier analysis. 2nd ed.}}
\newblock {New York, NY: Springer}, 2009.

\bibitem{Heywood1980}
J.~G. Heywood.
\newblock {The Navier-Stokes equations: On the existence, regularity and decay
  of solutions.}
\newblock {\em Indiana Univ. Math. J.}, 29:639--681, 1980.

\bibitem{KanielShinbrot67}
S.~Kaniel and M.~Shinbrot.
\newblock A reproductive property of the {N}avier-{S}tokes equations.
\newblock {\em Arch. Rational Mech. Anal.}, 24:363--369, 1967.

\bibitem{KozonoNakao96}
H.~Kozono and M.~Nakao.
\newblock Periodic solutions of the {N}avier-{S}tokes equations in unbounded
  domains.
\newblock {\em Tohoku Math. J. (2)}, 48(1):33--50, 1996.

\bibitem{Kyed_habil}
M.~Kyed.
\newblock {Time-Periodic Solutions to the Navier-Stokes Equations}.
\newblock {\em {Habilitationsschrift, Technische Universit{\"a}t Darmstadt}},
  2012.

\bibitem{Kyed_ExistenceRegularityTPNS2014}
M.~Kyed.
\newblock The existence and regularity of time-periodic solutions to the
  three-dimensional {N}avier-{S}tokes equations in the whole space.
\newblock {\em Nonlinearity}, 27(12):2909--2935, 2014.

\bibitem{Kyed_mrtpns}
M.~Kyed.
\newblock {Maximal regularity of the time-periodic linearized Navier-Stokes
  system}.
\newblock {\em {J. Math. Fluid Mech.}}, 16(3):523--538, 2014.

\bibitem{Kyed_FundsolTPStokes2016}
M.~Kyed.
\newblock {A fundamental solution to the time-periodic Stokes equations}.
\newblock {\em {J. Math. Anal. Appl.}}, 437(1):708--719, 2016.

\bibitem{Ladyzenskaya_InvestigationNSStatMotionIncFluid_1959}
O.~A. Lady\v{z}enskaya.
\newblock Investigation of the {N}avier-{S}tokes equation for stationary motion
  of an incompressible fluid.
\newblock {\em Uspehi Mat. Nauk}, 14, 1959.

\bibitem{LadyzhenskayaBook}
O.~A. Lady\v{z}enskaya.
\newblock {\em The mathematical theory of viscous incompressible flow}.
\newblock Second English edition. Gordon and Breach Science Publishers, New
  York, 1969.

\bibitem{Leray1933}
J.~Leray.
\newblock {\'Etude de diverses \'equations int\'egrales non lin\'eaires et de
  quelques probl\`emes que pose l'hydrodynamique}.
\newblock {\em J. Math. Pures Appl.}, 12:1--82, 1933.

\bibitem{Leray1934b}
J.~Leray.
\newblock Sur le mouvement d'un liquide visqueux emplissant l'espace.
\newblock {\em Acta Math.}, 63(1):193--248, 1934.

\bibitem{Maremonti_TimePer91}
P.~Maremonti.
\newblock Existence and stability of time-periodic solutions to the
  {N}avier-{S}tokes equations in the whole space.
\newblock {\em Nonlinearity}, 4(2):503--529, 1991.

\bibitem{Maremonti_HalfSpace91}
P.~Maremonti.
\newblock {Some theorems of existence for solutions of the Navier-Stokes
  equations with slip boundary conditions in half-space.}
\newblock {\em Ric. Mat.}, 40(1):81--135, 1991.

\bibitem{MaremontiPadula96}
P.~Maremonti and M.~Padula.
\newblock Existence, uniqueness and attainability of periodic solutions of the
  {N}avier-{S}tokes equations in exterior domains.
\newblock {\em Zap. Nauchn. Sem. S.-Peterburg. Otdel. Mat. Inst. Steklov.
  (POMI)}, 233(Kraev. Zadachi Mat. Fiz. i Smezh. Vopr. Teor. Funkts.
  27):142--182, 257, 1996.

\bibitem{MiyakawaTeramoto82}
T.~Miyakawa and Y.~Teramoto.
\newblock Existence and periodicity of weak solutions of the {N}avier-{S}tokes
  equations in a time dependent domain.
\newblock {\em Hiroshima Math. J.}, 12(3):513--528, 1982.

\bibitem{Morimoto1972}
H.~Morimoto.
\newblock {On existence of periodic weak solutions of the Navier-Stokes
  equations in regions with periodically moving boundaries.}
\newblock {\em J. Fac. Sci., Univ. Tokyo, Sect. I A}, 18:499--524, 1972.

\bibitem{Nguyen2014}
T.~H. Nguyen.
\newblock {Periodic Motions of Stokes and Navier-Stokes Flows Around a Rotating
  Obstacle}.
\newblock {\em Arch. Ration. Mech. Anal.}, 213(2):689--703, 2014.

\bibitem{oseenbook}
C.~W. Oseen.
\newblock {\em Neuere Methoden und Ergebnisse in der Hydrodynamik}.
\newblock Akademische Verlagsgesellschaft M.B.H., Leipzig, 1927.

\bibitem{Prodi1960}
G.~Prodi.
\newblock Qualche risultato riguardo alle equazioni di {N}avier-{S}tokes nel
  caso bidimensionale.
\newblock {\em Rend. Sem. Mat. Univ. Padova}, 30:1--15, 1960.

\bibitem{Prouse63}
G.~Prouse.
\newblock Soluzioni periodiche dell'equazione di {N}avier-{S}tokes.
\newblock {\em Atti Accad. Naz. Lincei Rend. Cl. Sci. Fis. Mat. Natur. (8)},
  35:443--447, 1963.

\bibitem{Serrin_PeriodicSolutionsNS1959}
J.~Serrin.
\newblock A note on the existence of periodic solutions of the
  {N}avier-{S}tokes equations.
\newblock {\em Arch. Rational Mech. Anal.}, 3:120--122, 1959.

\bibitem{Silvestre_TPFiniteKineticEnergy12}
A.~L. Silvestre.
\newblock Existence and uniqueness of time-periodic solutions with finite
  kinetic energy for the {N}avier-{S}tokes equations in {$\mathbb{R}^3$}.
\newblock {\em Nonlinearity}, 25(1):37--55, 2012.

\bibitem{SohrBook}
H.~Sohr.
\newblock {\em {The Navier-Stokes equations. An elementary functional analytic
  approach}}.
\newblock {Basel: Birkh\"auser}, 2001.

\bibitem{Takeshita69}
A.~Takeshita.
\newblock On the reproductive property of the {$2$}-dimensional
  {N}avier-{S}tokes equations.
\newblock {\em J. Fac. Sci. Univ. Tokyo Sect. I}, 16:297--311 (1970), 1969.

\bibitem{Taniuchi2009}
Y.~{Taniuchi}.
\newblock {On the uniqueness of time-periodic solutions to the Navier-Stokes
  equations in unbounded domains.}
\newblock {\em {Math. Z.}}, 261(3):597--615, 2009.

\bibitem{Teramoto1983}
Y.~Teramoto.
\newblock {On the stability of periodic solutions of the Navier-Stokes
  equations in a noncylindrical domain.}
\newblock {\em Hiroshima Math. J.}, 13:607--625, 1983.

\bibitem{BaalenWittwer2011}
G.~Van~Baalen and P.~Wittwer.
\newblock {Time periodic solutions of the Navier-Stokes equations with nonzero
  constant boundary conditions at infinity.}
\newblock {\em SIAM J. Math. Anal.}, 43(4):1787--1809, 2011.

\bibitem{Yamazaki2000}
M.~Yamazaki.
\newblock The {N}avier-{S}tokes equations in the weak-{$L^n$} space with
  time-dependent external force.
\newblock {\em Math. Ann.}, 317(4):635--675, 2000.

\bibitem{Yudovich60}
V.~Yudovich.
\newblock {Periodic motions of a viscous incompressible fluid.}
\newblock {\em Sov. Math., Dokl.}, 1:168--172, 1960.

\end{thebibliography}
\end{document}